\newcommand{\Hom}{\mbox{{\rm Hom}}}
\newtheorem{thm}{Theorem}[section]
\newtheorem{pro}[thm]{Proposition}
\newtheorem{lem}[thm]{Lemma}
\newtheorem{cor}[thm]{Corollary}
\newtheorem{ex}[thm]{Example}
\newtheorem{df}[thm]{Definition}
\newtheorem{rem}[thm]{Remark}
\newcommand{\cl}{\mbox{{\rm cl}}}
\newcommand{\st}{\mbox{{\rm St}}}
\newcommand{\pr}{\mbox{{\rm pr}}}
\newcommand{\id}{\mbox{{\rm id}}}
\newcommand{\aut}{\mbox{{\rm Aut}}}
\begin{document}

\title{Enveloping Ellis semigroups as compactifications of  transformations groups}

\author{K.\,L.~Kozlov}\thanks{The study of the first author was carried out with the help of the Center of Integration in Science, Ministry of Aliyah and Integration, Israel,  8123461 and is supported by ISF grant 3187/24.}
\address{Department of Mathematics, Ben-Gurion University of the Negev, Beer Sheva, Israel}
\email{kozlovk@bgu.ac.il}
\author{B.\,V.~Sorin}
\address{Lomonosov Moscow State University}
\email{bvs@improfi.ru}


\date{}

 \maketitle
 
\begin{abstract} The notion of a proper Ellis semigroup compactification is introduced. Ellis's functional approach shows how to obtain them from totally bounded  equiuniformities on a phase space $X$ when the acting group $G$ is with the topology of pointwise convergence and the $G$-space $(G, X, \curvearrowright)$ is $G$-Tychonoff. 

The correspondence between proper Ellis semigroup compactifications of a topological group and special totally bounded equiuniformities (called Ellis equiuniformities) on a topological group is established. The Ellis equiuniformity on a topological transformation group $G$ from the maximal equiuniformity on a phase space $G/H$ in the case of its uniformly equicontinuous action is compared with Roelcke uniformity on $G$. 

Proper Ellis semigroup compactifications are described for groups ${\rm S}(X)$ (the permutation group of a discrete space $X$) and $\aut (X)$ (automorphism group of an ultrahomogeneous chain $X$) in the permutation topology. It is shown that this approach can be applied to the unitary group of a Hilbert space. 

\medskip
 
Keywords: semigroup, topological transformation group, enveloping Ellis semigroup, equiuniformity, compactification, ultratransitive action, chain, Hilbert space.

\medskip

AMS classification: Primary 57S05, 20E22 Secondary 22F05, 22F50, 54E05, 54D35, 54H15, 47B02
 \end{abstract}

\section{Introduction and preliminary remarks} 

Compactifications of topological (semi)groups are a vital tool of investigations in Topological dynamics~\cite{Vries}, \cite{GlasnerMegr} and Topological Algebra~\cite{HS}. In the paper by a proper compactification of a topological space $X$, we understand a compactum in which $X$ is a dense subspace. Due to A.~Weil a topological group $G$ is a dense subgroup of a compact topological group $K$ ($K$ is a proper compactification of $G$) iff $G$ is totally bounded (in the right uniformity). There are groups without (proper) semitopological semigroup compactifications~\cite{Megr2001}. Every topological group has a proper right topological semigroup compactification, the greatest ambit~\cite{Brook}. A self-contained treatment of the theory of compact right topological semigroups and, in particular, of semigroup compactifications (nonproper) is in~\cite{BergJungMiln}. A study of the proper group's compactifications can be found  in~\cite{8}.
 
The functional approach in the study of transformation groups (groups acting effectively on phase spaces) allows us to view elements of $G$ as self maps of the phase space $X$. Each $g\in G$ is an element of $X^X$ and the composition of maps makes the image of $G$ a semigroup. Moreover, the multiplication on $G$ and composition on $X^X$ makes this representation an isomorphism of semigroups. This approach came from topological dynamics due to Ellis's construction~\cite{Ellis}. R.~Ellis examined the subgroup $G$ of group of homeomorphisms $\Hom(X)$ of compactum $X$ as a subsemigroup of the Cartesian product $X^X$ (a right topological semigroup) and named the closure of $G$ in $X^X$ the {\it enveloping {\rm(}Ellis{\rm)} semigroup}~\cite{Ellis}. It is a compact right topological semigroup and is a nonproper semigroup compactification of $G$ ($G$ is not its topological subspace). 

\medskip

However, if $G$ is in the topology of pointwise convergence (for the action $G\curvearrowright X$), then Ellis's representation becomes a topological isomorphism. For the action of a topological group on itself by multiplication on the left the topology of a group is equal to the topology of pointwise convergence (Lemma~\ref{cointpc}). If the topology of pointwise convergence on a group is an admissible group topology for the action $G\curvearrowright X$ and $(G, X, \curvearrowright)$ is a $G$-Tychonoff space, then the topology of pointwise convergence on a group is an admissible group topology for the action of $G$ of any 
$G$-compactification of $X$ (Lemma~\ref{l1}). These observations allow us to define in \S~\ref{Defcomp} a concept of a proper Ellis semigroup compactification (Definition~\ref{DEF}) and the corresponding Ellis equiuniformity (Definition~\ref{def1}). The proper Ellis semigroup compactification is a right topological semigroup and is a $G$-compactification of a group $G$ (multiplication agrees with the extended action). We demonstrate how to obtain all Ellis  equiuniformities from totally bounded equiuniformities on $G$ ($G$ is examined as a $G$-Tychonoff space $(G, G, \cdot)$) in \S~\ref{descrcomp} (Theorem~\ref{mathfrak}). 

In \S~\ref{unifequi} we examine uniformly equicontinuous actions (in this case the topology of pointwise convergence is an admissible group topology on an  acting group, and actions by isometries and actions on coset spaces with respect to neutral subgroups (by multiplication on the left) are among them). For actions on coset spaces with respect to a neutral subgroup, the corresponding Ellis equiuniformity is less than or equal to the least Ellis equiuniformity on a group that is greater than or equal to the precompact reflection of the Roelcke uniformity (Theorem~\ref{autultr}). 

Examples illustrate what proper Ellis semigroup compactifications can be obtained from the totally bounded maximal equiuniformities on phase spaces.

In \S~\ref{discrete} we examine an example of an ultratransitive action of a group $G$ on a discrete space $X$. It is shown that the Roelcke compactification of $G$ is equal to the proper Ellis semigroup compactification obtained from the maximal equiuniformity on $X$ and is a semitopological semigroup. The maximal equiuniformity in this case is the least totally bounded uniformity on $X$ (and corresponds to the Alexandroff compactification of $X$). 
The Roelcke compactification of $G$ is described (Theorem~\ref{Roelcke precomp4-1} and Corollary~\ref{cordis}).  Houghton's groups serve as an example.  In~\cite[\S\ 12, Theorem 12.2]{GlasnerMegr2008} the description of the Roelcke compactification of the permutation group of natural numbers is given. It is also shown that it is a semitopological group homeomorphic to the Cantor set.

In \S~\ref{chain} we examine an ultratransitive action of a subgroup $G$ of automorphism group $\aut (X)$ on a chain $X$ in discrete topology. We establish that the Roelcke compactification of $G$ is a proper Ellis semigroup compactification iff a chain is continuous (Corollaries~\ref{contchain} and~\ref{ccc}). In the general case, we derive the least Ellis equiuniformity which is greater than or equal to the Roelcke uniformity, from the Roelcke uniformity using Ellis construction (Theorem~\ref{Roelcke precomp4-2-1} and Corollary~\ref{ccc}). The proper Ellis semigroup compactification and the  Roelcke compactification of $G$ are described in Theorem~\ref{Roelcke precomp4-2-1}. The maximal equiuniformity on $X$ from which the Ellis equiuniformity is obtained corresponds to the maximal $G$-compactification of $X$ which is also described in\ \S~\ref{chain}. It is the minimal linearly ordered compactification of a GO-space $X$. 
Thompson group $F$ in the permutation topology is examined in Example~\ref{Thompson}. $F$ is Roelcke precompact but its Roelcke compactification is not a proper Ellis semigroup compactification. 

In \S~\ref{UNIT} it is shown that the Roelcke uniformity on the unitary group of a Hilbert space coincides with the uniformity from the family of the stabilizers of points of the unit sphere and is an Ellis equiuniformity  (Theorem~\ref{Roelcke precompHilb}). The result provides an alternative proof that the Roelcke compactification of the unitary group is a semitopological group which coincides with the space of all linear operators of norm $\leq 1$ in the weak operator topology~{\rm\cite{U1998}}.

\medskip

The method for working with Ellis equiuniformities is detailed in \S~\ref{digonal} and~\ref{Elliscmp}. In Theorem~\ref{diagactalgcont} we present both the topological and uniform variants of Ellis's construction and establish the extremal property of Ellis uniformity. Proposition~\ref{order} outlines a condition for the coincidence of the Ellis equiuniformity corresponding to the extended action and equiuniformity  corresponding to the original one. We demonstrate that the map of the poset of equiuniformities on the phase space to the poset of Ellis equiuniformities preserves order. In Proposition~\ref{a2-2} we establish a condition when the Ellis equiuniformity is equal to the uniformity on a group from a family of small subgroups.  Lastly, in Proposition~\ref{connection} we establish the connection between Ellis equiuniformities for the actions on different phase spaces.

The paper is a continuation of investigations initiated in~\cite{Sorin1}.

\medskip

We adopt terminology and notation from~\cite{Engelking} and~\cite{RD}. The family of finite subsets of a set (or space) $X$ is denoted $\Sigma_X$ and directed by inclusion. The set of non-negative reals is represented by  $\mathbb R_+$.

All spaces considered are Tychonoff, and we denote a (topological) space as $(X, \tau)$ where $\tau$ is a topology on (a set) $X$.  We order topologies: $\tau'\geq\tau$ if $\tau\subset\tau'$. An abbreviation ``nbd''  refers to an open neighbourhood of a point. The family of all nbds of the unit $e$ in $G$ is denoted $N_{G}(e)$. A (closed) subgroup $H$ of a topological group $G$ is called {\it neutral} if for any $O\in N_{G}(e)$ there exists $V\in N_{G}(e)$ such that $VH\subset HO$~\cite{RD}. A proper compactification of $X$ is denoted $(bX, b)$ where $b X$ is compacta, $b: X\to b X$ is a dense embedding.

Uniform space is denoted as $(X, \mathcal U)$, $u$ is a cover and ${\rm U}$ is a corresponding entourage of $\mathcal U$. If the cover $v$ is a refinement of  $u$, we use the notation $v\succ u$. Uniformity on a topological space is compatible with its topology. Additionally, $(\tilde X, \tilde{\mathcal U})$ denotes the completion of $(X, \mathcal U)$.


\subsection{Semigroups}\label{semgr}
A {\it semigroup} $(S, \bullet)$ is a set $S$ with an associative internal binary operation (multiplication $\bullet$). A group $G$ with multiplication $\cdot$ is a semigroup with multiplication  $\bullet=\cdot$.  A map $s: (S, \bullet)\to (S', \bullet')$ of semigroups is a {\it homomorphism} if $s(x\bullet y)=s(x)\bullet' s(y)$ for all $x, y\in S$. If $s: (G, \cdot)\to (S, \bullet)$ is a homomorphism, then $s(G)$ is a group and a subsemigroup of $S$. 

A  {\it semitopological} ({\it right topological}) {\it semigroup} is a semigroup $(S, \bullet)$ on a topological space $S$ with separately continuous multiplication  $\bullet$ (multiplication  $\bullet$ continuous on the right). 

All the necessary (and additional) information can be found in~\cite{BergJungMiln} and~\cite{ArhTk}.


\subsection{Spaces of maps and semigroup structure on $X^X$}\label{funcsp}

1. Since the Cartesian product $X^X$ may be regarded as the set of maps of $X$ into $X$ ($f=(f(x))_{x\in X}\in X^X$), a semigroup structure (multiplication is composition $\circ$ of maps) may be introduced on $X^X$.

If $X$ is a topological space, then the multiplication on $X^X$ in the Tychonoff topology (the subbase is formed by the sets $[x, O]=\{f\in X^X\ |\ f(x)\in O\}$, $x\in X$, $O$ open subset of $X$) is continuous on the right ($f\to f\circ g$; if $f\in [g(x), O]$, then $f\circ g\in [x, O]$) and continuous on the left for continuous maps  ($f\to g\circ f$; $g\in C(X)$, if $f\in [x, g^{-1}O]$, then $g\circ f\in [x, O]$).  Thus, $(X^X, \circ)$ is a {\it right topological semigroup}.

If $(X, \mathcal U_X)$ is a uniform space, then the uniformity $\mathcal U_X^{p}$ on $X^X$ is the Cartesian product of uniformities  $\mathcal U_X$ on factors  (see~\cite[Ch.~8, \S\ 8.2]{Engelking}). The multiplication on $(X^X, \mathcal U_X^{p})$ is uniformly continuous on the right ($f\to f\circ g$; if $y=g(x)$ and $(f_1 (y), f_2 (y))\in U\in\mathcal U_X$, then $((f_1\circ g)(x), (f_2\circ g)(x))\in U$) and uniformly continuous on the left for uniformly continuous maps  ($f\to g\circ f$, $g$ is uniformly continuous; for ${\rm U}\in\mathcal U_X$ let  ${\rm V}\in\mathcal U_X$ be such that if $(x, y)\in {\rm V}$, then $(g(x), g(y))\in {\rm U}$;  thus if $(f_1 (x), f_2 (x))\in {\rm V}$, then $((g\circ f_1)(x), (g\circ f_2)(x))\in {\rm U}$).

The topology on  $X^X$ induced by  $\mathcal U_X^{p}$ is the Tychonoff topology of the product where every factor has the Tychonoff topology induced by  $\mathcal U_X$. Following~\cite[Ch.10 , \S\ 1]{Burb}  the uniformity $\mathcal U_X^{p}$ on $X^X$ (as the set of maps) is called {\it the uniformity of pointwise} (or {\it simple}) {\it convergence} (abbreviation u.p.c) and the induced topology is {\it the topology of pointwise convergence} $\tau_p$ (abbreviation t.p.c.)~\cite[Ch.\ 2, \S\ 2.6]{Engelking}. 

\medskip

2. If $(X, \mathcal U_X)$ is a uniform space, then, following~\cite[Ch.10 , \S\ 1]{Burb} (see, also~\cite[Ch.~8, \S\ 8.2]{Engelking}), the {\it uniformity of uniform convergence} $\mathcal U_X^{u}$ on $X^X$ (abbreviation u.u.c.) is defined.  The topology  $\tau_u$ on  $X^X$ induced by  $\mathcal U_X^{u}$ is the {\it topology of uniform convergence}  (abbreviation t.u.c.). Evidently, ${\mathcal U}_X^{p}\subset\mathcal U_X^u$ and $\tau_u\geq\tau_p$.


\subsection{$G$-spaces, admissible group topologies on a transformation group and the topology of pointwise convergence}\label{admtop}

\quad\ 1. An action $\alpha: G\times X\to X$ (designation $\alpha (g, x)=gx$) of a group $G$ on a set $X$ ({\it phase space}) is called {\it effective} if the {\it kernel of the action} $\{g\in G\ |\ gx=x,\ \forall\ x\in X\}$ is the unit of $G$. If $G$ effectively acts on $X$, then $G\subset {\rm S}(X)$, where ${\rm S}(X)$ is the {\it permutation group} of $X$. 

The subgroup $\st_{\sigma}=\{g\in G\ |\ gx=x,\ x\in\sigma\in\Sigma_X\}=\bigcap\limits_{x\in\sigma}\st_{x}$ of $G$ is a {\it stabilizer}  (or stabilizer of points from $\sigma$).

If $T\subset G$, $Y\subset X$, then $TY=\alpha(T, Y)=\bigcup\{ ty\ |\ t\in T,\ y\in Y\}$. For $x\in X$ $\alpha_x: G\to X$, $\alpha_x(g)=\alpha (g, x)$, is an {\it orbit map}. 

\medskip

2. Under a continuous action $\alpha: G\times X\to X$ of a topological group $G$ on a space $X$, the triple $(G, X, \alpha)$ is called a {\it $G$-space}. A  continuous action $\alpha: G\times X\to X$ will be denoted $G\curvearrowright X$ if we don't need the special designation for the map. $G$-spaces $(G, X, \alpha)$ and  $(G, X', \alpha')$ are {\it equivalent} if there exists a homeomorphism $\varphi: X\to X'$ such that $\varphi(\alpha (g, x))=\alpha'(g, \varphi(x))$.  Equivalent $G$-spaces are identified.

A uniformity $\mathcal U_X$ on $X$ is called an {\it equiuniformity}~\cite{Megr1984} if the action $G\curvearrowright X$ is {\it saturated} (any homeomorphism from $G$ is uniformly continuous) and is {\it bounded} (for any $u\in\mathcal U$ there are $O\in N_G(e)$ and $v\in\mathcal U$ such that the cover $Ov=\{OV\ |\ V\in v\}\succ u$). In this case  $(G, X, \alpha)$ a {\it $G$-Tychonoff space}, the action is extended to the continuous action  $\tilde\alpha: G\times \tilde X\to\tilde X$ on the completion $(\tilde X, \tilde{\mathcal U}_X)$ of $(X, \mathcal U_X)$. The extension $\tilde{\mathcal U}_X$ of $\mathcal U_X$ to $\tilde X$ is an equiuniformity on $\tilde X$. 

\noindent The embedding $\jmath: X\to\tilde X$ is a {\it $G$-map} or an {\it equivariant map} of $G$-spaces, i.e. that the following diagram 
$$\begin{array}{ccc}
\quad G\times X & \stackrel{{\id\times\jmath}} {\longrightarrow} & G\times \tilde X \\
\alpha  \downarrow &   & \quad \downarrow \tilde\alpha \\
\quad X & \stackrel{\jmath} {\longrightarrow} & \tilde X, 
\end{array}$$  
is commutative, the pair $(\tilde X, \jmath)$  is a  {\it $G$-extension of $X$} and $\jmath (X)$ is a dense {\it invariant} subset of $\tilde X$~\cite{Megr0}. $G$-spaces as extensions will also be denoted as $(G, (\tilde X, \jmath), \tilde\alpha)$.

If $\mathcal U_X$ is  a totally bounded equiuniformity on $X$, then $(b X=\tilde X, \jmath_b)$ is a  {\it $G$-compactification}  or an {\it equivariant compactification} of  $X$. If  $(G, X, \curvearrowright)$ is a $G$-Tychonoff space,  then the maximal (totally bounded) equiuniformity on $X$ exists and {\it the maximal $G$-compactification}  $(\beta_G X, \jmath_{\beta})$ of $X$ corresponds to the maximal totally bounded  equiuniformity on $X$. The maximal totally bounded equiuniformity  $\mathcal U_X^{bm}$ on $X$ is the {\it precompact reflection}~\cite{Isbell} of the maximal equiuniformity  $\mathcal U_X^{m}$ on $X$ and the compactification  $(b X, \tilde{\mathcal U}_X^{bm})$ (completion of $(X, {\mathcal U}_X^{bm})$) is the {\it Samuel compactification of $X$ with respect to the equiuniformity $\mathcal U_X^{m}$} (see~\cite[Ch.\ 8, Problem 8.5.7]{Engelking}).

\begin{rem} {\rm If  for an action $G\curvearrowright X$ (not continuous) of a topological group $G$ on a space $X$ the uniformity $\mathcal U_X$ on $X$ is saturated and  bounded, then the the action $G\curvearrowright X$ is continuous.}
\end{rem}

The maximal equiuniformity on a coset space $G/H$ of a topological group $G$ (action by multiplication on the left) is described in~\cite{ChK} or~\cite{ChK2}. There such actions are called {\it open} and the covers of the base of the maximal equiuniformity are of the form 
$$\{Ox\ |\ x\in G/H\},\ O\in N_G(e).$$

We denote by $\mathbb{BU}(X)$ the set of all totally bounded equiuniformities on $X$. All the necessary (and additional) information can be found in~\cite{Megr0}.

\medskip

3. If $X$ is a topological space, $\Hom(X)$ is the group of its homeomorphisms, then a group $G$ {\it effectively acts on $X$} if $G\subset\Hom(X)$. If $X$ is a discrete space, then $\Hom(X)={\rm S}(X)$.

A topology in which a group $G$ is a topological group and its effective action $G\curvearrowright X$ is continuous is called an {\it admissible group topology on $G$}~\cite{Arens}. 

For an effective action of $G$ on a compactum $X$ the {\it compact-open topology} $\tau_{co}$ is the smallest admissible group topology on $G$~\cite{Arens}. Considering elements of $G$ as continuous maps of $X$ into $X$ ($G\subset X^X$) the compact-open topology $\tau_{co}$ coincides with the  t.u.c.  $\tau_{u}$~\cite[Corollary 8.2.7]{Engelking}.

For an effective action of $G$ on a discrete space $X$, the {\it permutation topology} $\tau_{\partial}$ (the subbase  is formed by the sets $[x, y]=\{g\in G\ |\ gx=y\}$, $x, y\in X$) is the smallest admissible group topology. The group $G$ in the permutation topology is {\it non-Archimedean} (a unit nbd base is formed by clopen subgroups (stabilizers)) and the $G$-space $((G, \tau_{\partial}), X, \curvearrowright)$ is $G$-Tychonoff.

If the {\it topology of pointwise convergence} $\tau_p$ (the subbase is formed by sets of the form $[x, O]=\{g\in G\ |\ gx\in O\}$, $x\in X$, $O$ is open in $X$) is an admissible group topology on a group $G$ of homeomorphisms of the space $X$ acting effectively, then it is the smallest admissible group topology \cite[Lemma 3.1]{Kozlov}, $\tau_{\partial}\geq\tau_p$, $\tau_{\partial}$ is an admissible group topology and  $\tau_{\partial}=\tau_p$, if $X$ is a discrete space. Considering elements of $G$ as continuous maps of $X$ into $X$ ($G\subset X^X$) the  topology of pointwise convergence $\tau_p$ coincides with the  t.p.c. from \S~\ref{funcsp}~\cite[Proposition 2.6.3]{Engelking}.

If the topology of pointwise convergence  $\tau_p$ is an admissible group topology on $G$ acting effectively on a compactum $X$, then $\tau_p=\tau_{co}$.

\begin{rem}{\rm 
Example from~{\rm\cite{Megr1988}} shows that the t.p.c. $\tau_p$ may be an admissible group topology on a group $G$ of homeomorphisms of $X$ acting effectively, but the $G$-space $(G=(G, \tau_p), X,  \curvearrowright)$ may not be $G$-Tychonoff.}
\end{rem}

In the paper, effective actions are examined.

\medskip

4. Let  $G$ be a topological group. The $G$-space $(G, G, \cdot)$  with the action $G \curvearrowright G$ by multiplication $\cdot$ on the left is $G$-Tychonoff.  

\begin{lem}\label{cointpc}
Let  $(G, \tau)$ be a topological group. For the $G$-space $(G, G, \cdot)$ the t.p.c. $\tau_p$ is an admissible group topology on $G$ and $\tau=\tau_p$. 
\end{lem}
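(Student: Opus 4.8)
The plan is to compare the two topologies directly at the level of subbasic open sets, exploiting that for the left-multiplication action the orbit maps are just right translations. Recall that $\tau_p$ is the coarsest topology on $G$ making every orbit map $\alpha_x$ continuous, and that it is generated by the subbase consisting of the sets $[x, O] = \{g \in G \mid gx \in O\}$ with $x \in G$ and $O \in \tau$. The first step is the elementary rewriting
$$[x, O] = \{g \in G \mid gx \in O\} = O x^{-1},$$
which turns each subbasic set of $\tau_p$ into a right translate of a $\tau$-open set.

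First I would prove $\tau_p \leq \tau$. Since right translation by $x^{-1}$ is a homeomorphism of the topological group $(G, \tau)$, the set $O x^{-1}$ is $\tau$-open whenever $O \in \tau$. Hence every subbasic member of $\tau_p$ is $\tau$-open, and therefore $\tau_p \subseteq \tau$. Next I would prove the reverse inclusion $\tau \leq \tau_p$ by evaluating the subbase at the unit: for $x = e$ one has $[e, O] = \{g \in G \mid g \in O\} = O$, so every $O \in \tau$ is itself a subbasic $\tau_p$-open set. Thus $\tau \subseteq \tau_p$, and combining the two inclusions gives $\tau = \tau_p$.

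It then remains to observe that $\tau_p$ is an admissible group topology. The left-multiplication action $\alpha \colon G \times G \to G$, $\alpha(g, x) = gx$, is effective (its kernel is $\{e\}$, as seen by taking $x = e$, where $ge = e$ forces $g = e$) and continuous, being the multiplication of the topological group $(G, \tau)$; and $(G, \tau)$ is of course a topological group. Since $\tau = \tau_p$, the very same data exhibit $\tau_p$ as a topology in which $G$ is a topological group carrying a continuous effective action, i.e.\ an admissible group topology. I do not expect a genuine obstacle here: the whole argument rests on the identity $[x, O] = O x^{-1}$ together with the choice $x = e$, and the only point requiring a little care is keeping the refinement convention $\tau' \geq \tau \iff \tau \subseteq \tau'$ straight when reading off the two inclusions.
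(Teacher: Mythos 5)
Your proof is correct, and it is organized differently from the paper's. The paper first establishes that $\tau_p$ is an admissible group topology by invoking an external fact — the left-multiplication action is uniformly equicontinuous with respect to the left uniformity $L$ on $G$ (citing Bourbaki) — which also gives $\tau \geq \tau_p$; only then does it use the evaluation $[e, O] = O$ to conclude $\tau = \tau_p$. You instead prove the equality of topologies first, entirely elementarily: the identity $[x, O] = O x^{-1}$ together with the fact that right translations are homeomorphisms of $(G, \tau)$ gives $\tau_p \subseteq \tau$, and the same evaluation $[e, O] = O$ gives the reverse inclusion; admissibility then falls out for free, since $\tau_p = \tau$ makes the action literally the group multiplication of the topological group $(G, \tau)$. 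Your route buys self-containedness — no appeal to equicontinuity or to any uniform structure — and avoids any worry of circularity, since the equality $\tau = \tau_p$ is established before admissibility is even discussed. The paper's route, while heavier here, foregrounds the uniform equicontinuity of the action, a property that is not incidental: it is exactly the theme developed later in \S~\ref{unifequi}, so the citation does double duty as a pointer to the framework used for the coset-space results. Both arguments share the key one-line observation $[e, O] = O$ for the inclusion $\tau \subseteq \tau_p$.
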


\begin{proof}
The topology $\tau_p$ of pointwise convergence is an admissible group topology since the action is uniformly equicontinuous with respect to the left uniformity $L$ on $G$~\cite[Ch. X, § 3, item 5]{Burb} and $\tau\geq\tau_p$. If $O\in N_{(G, \tau)}(e)$, then the set $[e, O]\in\tau_p$ and $[e, O]=O$. Thus,  $\tau=\tau_p$.  
\end{proof}

\begin{lem}\label{l1} Let $(G=(G, \tau_p), X, \alpha)$ be $G$-Tychonoff and $\mathcal U_X$  is an equiuniformity on $X$. Then the t.p.c. $\tilde\tau_p$ on $G$  for the extended action $\tilde\alpha: G\times\tilde X\to\tilde X$ {\rm(}$(\tilde X, \tilde{\mathcal U}_X)$ is the completion of $(X, \mathcal U_X)${\rm)} is the smallest admissible group topology on $G$ and $\tilde\tau_p=\tau_p$.
\end{lem}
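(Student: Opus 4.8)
The plan is to prove the two assertions — that $\tilde\tau_p$ coincides with $\tau_p$ and that it is the smallest admissible group topology for the extended action — by establishing the two inclusions $\tau_p\subset\tilde\tau_p$ and $\tilde\tau_p\subset\tau_p$ separately and then invoking the minimality criterion \cite[Lemma 3.1]{Kozlov}. Throughout I would identify $X$ with its dense invariant image $\jmath(X)\subset\tilde X$ and use the equivariance $\jmath(gx)=g\jmath(x)$ of the embedding.

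First I would verify $\tau_p\subset\tilde\tau_p$. A subbasic set of $\tau_p$ has the form $[x,O]=\{g\in G\ |\ gx\in O\}$ with $x\in X$ and $O$ open in $X$. Since $\jmath$ is an embedding there is an open $\tilde O\subset\tilde X$ with $\jmath(O)=\tilde O\cap\jmath(X)$. As $g\jmath(x)=\jmath(gx)$ always lies in $\jmath(X)$, the condition $g\jmath(x)\in\tilde O$ is equivalent to $g\jmath(x)\in\tilde O\cap\jmath(X)=\jmath(O)$, i.e.\ to $gx\in O$. Hence $[x,O]=[\jmath(x),\tilde O]\in\tilde\tau_p$, and since such sets form a subbase of $\tau_p$ the inclusion follows. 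The one point to watch is that this genuinely uses the \emph{invariance} of $\jmath(X)$ (so that the orbit of $\jmath(x)$ never leaves $\jmath(X)$), not merely its density.

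Next I would verify the reverse inclusion $\tilde\tau_p\subset\tau_p$, which is where the hypotheses do their real work. Because $\mathcal U_X$ is an equiuniformity, the extended action $\tilde\alpha\colon(G,\tau_p)\times\tilde X\to\tilde X$ is continuous — this is exactly the property of equiuniformities recorded before the statement. Fixing $\tilde x\in\tilde X$, the orbit map $\tilde\alpha_{\tilde x}\colon(G,\tau_p)\to\tilde X$, $g\mapsto g\tilde x$, is therefore continuous, so every subbasic set $[\tilde x,\tilde O]=\tilde\alpha_{\tilde x}^{-1}(\tilde O)$ of $\tilde\tau_p$ lies in $\tau_p$. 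Phrased differently, $\tilde\tau_p$ is the initial topology induced on $G$ by the family $\{\tilde\alpha_{\tilde x}\}_{\tilde x\in\tilde X}$, and continuity of all these maps for $\tau_p$ forces $\tilde\tau_p$ to be no finer than $\tau_p$.

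The two inclusions give $\tilde\tau_p=\tau_p$. Since $(G,\tau_p)$ is a topological group and the extended action is continuous for it, $\tau_p=\tilde\tau_p$ is an admissible group topology for $G\curvearrowright\tilde X$; here $G$ acts effectively and by homeomorphisms on $\tilde X$ because its action restricts to the effective saturated action on the dense invariant set $X$. Finally, applying \cite[Lemma 3.1]{Kozlov} to the effective action on $\tilde X$ — whose t.p.c.\ has just been shown admissible — yields that $\tilde\tau_p$ is the smallest admissible group topology on $G$. I expect the main difficulty to be bookkeeping rather than conceptual: keeping the two copies of the pointwise topology (for $X$ and for $\tilde X$) distinct while transferring open sets across $\jmath$, and making sure the continuity of the extended action used in the second inclusion is precisely the one furnished by the equiuniformity hypothesis.
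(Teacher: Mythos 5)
Your proof is correct, and it takes a genuinely more direct route than the paper's. The paper does not argue on $\tilde X$ alone: it passes to the precompact reflection $\mathcal V_X$ of $\mathcal U_X$ and its Samuel compactification $bX$, checks via a commutative diagram that both $X$ and $\tilde X$ sit as invariant subsets of $bX$, and then invokes Arens' theorem that the compact-open topology $\tau_{co}$ of the action $G\curvearrowright bX$ is the smallest admissible group topology; the statement then falls out of the chain $\tau_p\geq\tau_{co}\geq\tau_p^s\geq\tilde\tau_p\geq\tau_p$, where $\tau_p^s$ is the t.p.c. of $G\curvearrowright bX$ and the last two inequalities rest on exactly the invariance argument you use for your first inclusion. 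Your argument replaces the compactification detour by the single observation that the extended action $\tilde\alpha\colon(G,\tau_p)\times\tilde X\to\tilde X$ is continuous (part of the equiuniformity package stated in the preliminaries), which yields $\tilde\tau_p\subset\tau_p$ immediately, and you then obtain minimality from \cite[Lemma 3.1]{Kozlov} --- a result the paper itself records in its preliminaries, so this is a legitimate ingredient rather than an appeal to something stronger than what the paper has available. You also correctly handle the hypothesis of that lemma: effectiveness of the extended action is inherited from effectiveness on the dense invariant subset $X$, a point the paper leaves implicit. What the paper's longer route buys is the additional identification $\tilde\tau_p=\tau_{co}$ with the compact-open topology of the action on the Samuel compactification, which fits the compactification-theoretic theme of the paper; what your route buys is brevity and a cleaner isolation of the two hypotheses actually doing the work (invariance of $X$ in $\tilde X$ for one inclusion, continuity of the extended action for the other).
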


\begin{proof} Let $\mathcal V_X$ be the precompact reflection of  $\mathcal U_X$,  $(b X, \tilde{\mathcal V}_X)$ is the Samuel compactification of  $(X, \mathcal U_X)$ and 
$(b X, \jmath_{\mathcal V_X})$ is the $G$-compactification of $X$. From the correctly defined commutative diagram 
$$\begin{array}{ccl}
X & \stackrel{\jmath_{\mathcal V_X}}{\hookrightarrow} & \quad b X \\
\quad \jmath_{\mathcal U_X} \searrow &   &  \nearrow \jmath \\
 & \tilde X & 
\end{array}$$
where all maps are equivariant embeddings and $\jmath$ is the extension of a uniformly continuous map $\jmath_{\mathcal V_X}$ (see~\cite[Ch.\ 8, Problem 8.5.7]{Engelking}), it follows that $\jmath_{\mathcal V_X} (X)(=X)$ and $\jmath (\tilde X)(=\tilde X)$ are invariant subsets of $b X$.

The compact-open topology $\tau_{co}$ on $G$ induced by the extended action $G\curvearrowright b X$ is the smallest admissible group topology on $G$ and $\tau_p$ is an admissible group topology on $G$. Hence, $\tau_p\geq\tau_{co}$. Evidently, $\tau_{co}\geq\tau^s_p$, where $\tau^s_p$ is the t.p.c. on $G$ for the action $G\curvearrowright b X$ (not necessarily an admissible group topology).

Since $X$ and $\tilde X$ are invariant subsets of $b X$, for the t.p.c. $\tilde\tau_p$ on $G$ for the action $G\curvearrowright\tilde X$  (not necessarily an admissible group topology) one has $\tau^s_p\geq\tilde\tau_p\geq\tau_p$. Therefore, $\tau_{co}=\tilde\tau_p=\tau_p$ and $\tilde\tau_p$ is the smallest  admissible group topology for the extended action $G\curvearrowright\tilde X$.
\end{proof}


\subsection{Uniformities on a topological group}\label{uniftopgr}

On a topological group $G$ four uniformities are well-known. The {\it right uniformity} $R$ (the base is formed by the covers $\{Og=\bigcup\{h\cdot g\ |\ h\in O\}\ |\ g\in G\}$, $O\in N_G(e)$), the {\it left uniformity} $L$, the {\it two sided uniformity} $R\vee L$ (the least upper bound of the right and left uniformities) and the {\it Roelcke uniformity} $L\wedge R$ (the greatest lower bound of the right and left uniformities) (the base is formed by the covers $\{OgO=\bigcup\{h\cdot g\cdot h'\ |\ h, h'\in O\}\ |\ g\in G\}$, $O\in N_G(e)$). A group  $G$ is {\it Roelcke precompact} if the Roelcke uniformity is totally bounded. 

For every equiuniformity $\mathcal U$ on $G$ of a $G$-space $(G, G, \cdot)$ one has $\mathcal U\subset R$, $L\wedge R$  is an equiuniformity. All the necessary information about these uniformities can be found in~\cite{RD}. 

\medskip

{\it Uniformity on $G$ from a family of small subgroups is introduced in~{\rm\cite[\S\ 4]{Kozlov}}.}
Let  $(G=(G, \tau_p), X,  \curvearrowright)$ be a $G$-space.
The family of stabilizers  
$$\st_{\sigma}=\bigcap\{\st_{x}\ |\ x\in\sigma\},\ \sigma\in\Sigma_X,$$ 
is a {\it directed family  of small} (closed) {\it subgroups of $G$} ($\st_{\sigma}\leq\st_{\sigma'}\Longleftrightarrow \sigma\subset\sigma'$)  which defines the uniformity $R_{\Sigma_X}$ on $G$. The base of  $R_{\Sigma_X}$ is formed by the covers 
$$\{Og\st_{\sigma}=\bigcup\{h\cdot g\cdot h'\ |\ h\in O,\ h'\in\st_{\sigma}\}\ |\ g\in G\},\ O\in N_G(e),\  \sigma\in\Sigma_X~\mbox{\cite[\S\ 4]{Kozlov}}.$$  
Uniformity  $R_{\Sigma_X}$ is an equiuniformity (for the action $G \curvearrowright G$) and has a spectral representation. It is initial~\cite[Proposition-definition 0.16]{RD} with respect to the quotient maps $q_{\sigma}: G\to (G/\st_{\sigma}, \mathcal U_{G/\st_{\sigma}})$  (cosets $G/\st_{\sigma}$ with the maximal equiuniformities $\mathcal U_{G/\st_{\sigma}}$ (the quotient uniformities of the right uniformity $R$ on $G$)~\cite[Theorem 5.21]{RD} and~\cite{ChK}), $\sigma\in\Sigma_X$.
 
The completion of $G$ with respect to  $R_{\Sigma_X}$ is uniformly equivalent to the limit of the inverse spectrum $\{\widetilde{G/\st_{\sigma}}, p_{\sigma'\sigma}, \Sigma_X\}$, where $\widetilde{G/\st_{\sigma}}$ is the completion of $G/\st_{\sigma}$ with respect to $\mathcal U_{G/\st_{\sigma}}$,  $p_{\sigma'\sigma}: \widetilde{G/\st_{\sigma'}}\to\widetilde{G/\st_{\sigma}}$ is the extension of the quotient map $G/\st_{\sigma'}\to G/\st_{\sigma}$, $\sigma\subset\sigma'$~\cite[Theorem 1.4]{Kozlov}.

\begin{rem}{\rm\cite[Corollary 4.5]{Kozlov}}\label{a2-1}
{\rm $L\wedge R\subset R_{\Sigma_X}\subset R$, hence, if the uniformity $R_{\Sigma_X}$ is totally bounded, then $G$ is Roelcke precompact.

For a group $G$ {\rm(}acting on a discrete space $X${\rm)} in the permutation topology the stabilizers $\st_{\sigma}$, $\sigma\in\Sigma_X$, are clopen subgroups {\rm(}nbds of the unit{\rm)} and $L\wedge R=R_{\Sigma_X}$.}
\end{rem}


\section{Ellis uniformity on a group from an equiuniformity on a phase space}

\subsection{A proper enveloping Ellis semigroup compactification of a topological group}\label{Defcomp} 

\begin{df}\label{DEF}
Let $G$ be a topological group. A  proper enveloping Ellis semigroup compactification of $G$ is a pair $(S, s)$, where $S=(S, \bullet)$ is a compact right topological semigroup, $s: G\to S$ is a topological isomorphism of $G$ on a dense subgroup of $S$ and the restriction of the multiplication to $s(G)\times S$ is continuous. 

$\mathfrak{E} (G)$ is the set of proper enveloping Ellis semigroup compactifications of $G$.
\end{df}

\begin{rem} {\rm ( a) If a topological group $G$ is a subsemigroup of a compact semitopological semigroup $S$, then  the restriction of the multiplication to $G\times S$ is necessarily continuous~{\rm\cite[Proposition 6.2]{Lawson}}, but there are topological groups which have no proper  semitopological semigroup compactifications~{\rm\cite{Megr2001}}. 

(b) If in the Definition~\ref{DEF} the requirement on $s$ is relaxed to be a continuous homomorphism, then one gets the notion of an {\it enveloping semigroup of $G$} in~{\rm\cite[\S\ 3.6]{Vries}}.

(b') If in the Definition~\ref{DEF}  $G$ is a semigroup and a topological space, the requirement on $s$ is relaxed to be a continuous homomorphism  and the condition of continuity of the  restriction of the multiplication to $s(G)\times S$ is relaxed to its continuity on the left, then one gets the notion of a {\it semigroup compactification} in~{\rm\cite{HS}}.

(c)  $\mathfrak{E} (G)$ with the order: $(S_1, s_1)\leq (S_2, s_2)$ if there is a continuous homomorphism $\psi: S_2\to S_1$ such that $\psi\circ s_2=s_1$,  is a poset. Below it will be shown that the poset $\mathfrak{E} (G)$ is nonempty.}
\end{rem}

Let $(S, s)\in \mathfrak{E} (G)$. The restriction of the multiplication to $s(G)\times S$ defines a continuous action $\alpha_S: G\times S\to S$ ($\alpha_S(g, x)=s(g)\bullet x$) and the diagram
$$\begin{array}{ccccc}
\quad G\times G & \stackrel{\id\times s}\hookrightarrow & G\times S &  \stackrel{s\times\id}\hookrightarrow    & S\times S\\
\cdot  \downarrow &   & \quad \downarrow \alpha_S   &  &  \quad \downarrow \bullet \\
\quad G & \stackrel{s}\hookrightarrow & S &  \stackrel{\id} {\longrightarrow} & S
\end{array}$$
is commutative. 

The unique uniformity $\tilde{\mathcal E}$ on $S$ induces a totally bounded equiuniformity $\mathcal E$ on $G$ (the completion of $(G, \mathcal E)$  is $(S, \tilde{\mathcal E})$). Denote by $\mathbb{BU}(G)$ the set of totally bounded equiuniformities on $G$ as a phase space of $(G, G, \cdot)$.

\begin{df}\label{def1}
A totally bounded equiuniformity $\mathcal E$ on $G$ is {\it an Ellis equiuniformity} if $\tilde G$ is a proper enveloping Ellis semigroup compactification of $G$ {\rm(}$(\tilde G, \tilde{\mathcal E})$ is the completion of $(G, \mathcal E)${\rm)}.

$\mathbb{E} (G)$ is a poset of Ellis equiuniformities on  $G$.
\end{df}

\begin{rem} {\rm Not every $G$-compactification $(b G, b)$ of  $G$ is a proper enveloping Ellis semigroup compactification of $G$ (equivalently, not every equiuniformity on $G$ is an Ellis equiuniformity).}
\end{rem}

\begin{pro}\label{isomorph}
The posets  $\mathfrak{E} (G)$ and $\mathbb{E} (G)$ are isomorphic.
\end{pro}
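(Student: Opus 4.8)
The plan is to exhibit an explicit order isomorphism between the two posets. In one direction I define $\Phi\colon\mathfrak{E}(G)\to\mathbb{E}(G)$ by sending $(S,s)$ to the totally bounded equiuniformity $\mathcal E$ that the unique uniformity $\tilde{\mathcal E}$ of the compactum $S$ induces on $G$ via the embedding $s$; this is exactly the $\mathcal E$ produced in the discussion preceding Definition~\ref{def1}, and it lies in $\mathbb{E}(G)$ because the completion of $(G,\mathcal E)$ is $(S,\tilde{\mathcal E})=(S,s)$, a proper enveloping Ellis semigroup compactification. In the other direction I define $\Psi\colon\mathbb{E}(G)\to\mathfrak{E}(G)$ by sending an Ellis equiuniformity $\mathcal E$ to its completion $(\tilde G,\jmath)$, which is a proper enveloping Ellis semigroup compactification precisely by Definition~\ref{def1}.

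That $\Phi$ and $\Psi$ are mutually inverse is immediate from uniqueness of the uniform completion of a totally bounded uniform space. Starting from $(S,s)$ and passing to $\mathcal E=\Phi(S,s)$, the completion $\Psi(\mathcal E)$ is, up to the identity on $G$, the compactum $(S,\tilde{\mathcal E})$ with $s$, hence isomorphic to $(S,s)$ in $\mathfrak{E}(G)$; starting from $\mathcal E\in\mathbb{E}(G)$, the equiuniformity induced on $G$ by the unique uniformity of $\tilde G$ is again $\mathcal E$, since $\mathcal E$ is the trace on $G$ of the uniformity of its own completion. Thus $\Psi\circ\Phi=\mathrm{id}$ and $\Phi\circ\Psi=\mathrm{id}$.

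It remains to check that both maps preserve order, where $(S_1,s_1)\le(S_2,s_2)$ means there is a continuous homomorphism $\psi\colon S_2\to S_1$ with $\psi\circ s_2=s_1$, and $\mathcal E_1\le\mathcal E_2$ means $\mathcal E_1\subseteq\mathcal E_2$. For $\Phi$ this is routine: such a $\psi$ is automatically uniformly continuous for the unique uniformities $\tilde{\mathcal E}_2,\tilde{\mathcal E}_1$ of the compacta, and since $s_1,s_2$ are uniform embeddings and $\psi\circ s_2=s_1$, restricting shows $\mathrm{id}\colon(G,\mathcal E_2)\to(G,\mathcal E_1)$ is uniformly continuous, i.e. $\mathcal E_1\subseteq\mathcal E_2$. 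The substance lies in the reverse implication for $\Psi$. Assuming $\mathcal E_1\subseteq\mathcal E_2$, the uniformly continuous identity $(G,\mathcal E_2)\to(G,\mathcal E_1)$ extends to a continuous map $\psi\colon S_2\to S_1$ of completions with $\psi\circ s_2=s_1$, and I first record that $\psi$ is a $G$-map: $\psi(s_2(g)\bullet_2 x)=s_1(g)\bullet_1\psi(x)$ for all $g\in G$, $x\in S_2$. Both sides are continuous in $x$ because the extended actions $\alpha_{S_i}(g,\cdot)$ are continuous by Definition~\ref{DEF} and $\psi$ is continuous, and they agree on the dense subgroup $s_2(G)$, where for $x=s_2(g')$ both reduce to $s_1(gg')$ via the commutative diagram relating $\cdot$, $\alpha_S$ and $\bullet$; density and the Hausdorff property of $S_1$ give equality throughout.

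The main obstacle is to upgrade this equivariance to the full homomorphism property $\psi(x\bullet_2 y)=\psi(x)\bullet_1\psi(y)$. I would fix $y$ and compare $f(x)=\psi(x\bullet_2 y)$ with $h(x)=\psi(x)\bullet_1\psi(y)$. Both are continuous: $f$ because right multiplication by $y$ is continuous in the right topological semigroup $S_2$ and $\psi$ is continuous, and $h$ because $\psi$ is continuous and right multiplication by $\psi(y)$ is continuous in $S_1$. For $x=s_2(g)$ the equivariance already proved gives $f(s_2(g))=s_1(g)\bullet_1\psi(y)=h(s_2(g))$, so $f$ and $h$ agree on the dense set $s_2(G)$, whence $f=h$ by density and the Hausdorff property of $S_1$. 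Thus $\psi$ is a continuous homomorphism with $\psi\circ s_2=s_1$, giving $(S_1,s_1)\le(S_2,s_2)$, so $\Psi$ preserves order; combined with the earlier paragraphs, $\Phi$ and $\Psi$ are mutually inverse order isomorphisms. The one genuinely delicate passage, where the hypotheses of Definition~\ref{DEF} (right topological multiplication, together with continuity of the restriction of $\bullet$ to $s(G)\times S$) are actually used, is exactly this extension of the homomorphism identity from the dense subgroup $s_2(G)$ to all of $S_2$.
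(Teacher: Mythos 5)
Your proposal is correct and follows essentially the same route as the paper: both reduce the statement to showing that a continuous map of compactifications $\psi\colon S_2\to S_1$ with $\psi\circ s_2=s_1$ is automatically a homomorphism, and both establish this by the same three-stage density argument (first on $s_2(G)\times s_2(G)$, then on $s_2(G)\times S_2$ using continuity of the multiplication restricted to $s(G)\times S$, then on all of $S_2\times S_2$ by fixing the second variable and using the right topological structure). The only differences are cosmetic: you spell out the Engelking correspondence and the easy order-preservation direction that the paper simply cites, and you phrase the middle step as equivariance for fixed $g$ rather than joint continuity on $s_2(G)\times S_2$.
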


\begin{proof} The restriction of isomorphism of posets of compactifications of $X$ and totally bounded uniformities on $X$~\cite[Ch.8,\ \S\ 8.4]{Engelking}  to the proper enveloping Ellis semigroup compactifications of $G$ and the Ellis equiuniformities on $G$ will work if one shows that a continuous map $\psi: S_2\to S_1$ of the  proper enveloping Ellis semigroup compactifications $(S_1, s_1)\leq (S_2, s_2)$ of $G$ such that $\psi\circ s_2=s_1$ is a homomorphism. 

We must check fulfillment of the equality
$$\psi(x\bullet y)=\psi(x)\bullet\psi(y),\ \forall\ (x, y)\in S_2\times S_2.\eqno{(e1)}$$

From commutativity of the diagram 
$$\begin{array}{ccl}
S_2 & \stackrel{\psi}{\longrightarrow} & \quad S_1 \\
\quad s_2 \nwarrow &   &  \nearrow s_1 \\
 & G & 
\end{array}$$
where $s_2, s_1$ are topological isomorphisms,  for $x, y\in G$ one has
$$\psi(s_2(x)\bullet s_2(y))=\psi(s_2(x\cdot y))=s_1(x\cdot y)=s_1(x)\bullet s_1(y)=\psi(s_2(x))\bullet\psi(s_2(y))$$
and (e1) holds for $(x, y)\in s_2(G)\times s_2(G)$.

The restrictions of the maps $$\Psi: S_2\times S_2\to S_1,\ \Psi (x, y)=\psi(x\bullet y),\ \mbox{and}$$  
$$\Psi': S_2\times S_2\to S_1,\ \Psi' (x, y)=\psi(x)\bullet\psi(y),$$
on the subset $s_2(G)\times S_2$ are continuous. Hence, (e1) holds for $(x, y)\in s_2(G)\times S_2$ (since continuous maps coincide on a dense subset $s_2(G)\times s_2(G)$).

Since $S_2$ is a right topological semigroup, for any fixed $y\in S_2$ the restrictions of the maps $\Psi$ and $\Psi'$ to $S_2\times\{y\}$ are continuous and coincide on a dense subset  $s_2(G)\times\{y\}$. Hence, (e1) holds on $S_2\times\{y\}$, $y\in S_2$. Thus (e1) holds on $S_2\times S_2$.
\end{proof}


\subsection{Diagonal action, agreement of algebraic structures with topology}\label{digonal}

(A) For a family of sets $X_t$ with actions $\alpha_t$ of a group $G$, $t\in\mathcal T$, the {\it diagonal action} $\alpha_{\Delta\mathcal T}$ of $G$ on the product $\prod\limits_{t\in\mathcal T}  X_t$
$$\alpha_{\Delta\mathcal T} (g, f)(t)=\alpha_t (g, f(t)),\ t\in\mathcal T,\eqno{(\Delta)}$$
where $f=(f(t))_{t\in\mathcal T}\in\prod\limits_{t\in\mathcal T}  X_t$ (note, that $f(t)=\pr_t (f)$)
is correctly defined (see, for example, \cite[\S\ 2.7 Constructions,\ 2]{Vries}). 

\begin{pro}\label{diagactalg}
{\rm (1)} For an action $\alpha: G\times X\to X$ the map 
$$\imath: G\to X^X,\ \imath (g)(x)=\alpha (g, x),\ x\in X,\eqno{(\imath)}$$
is an isomorphism of $G$ onto the group $\imath (G)$ which is a subsemigroup of $X^X$.

{\rm (2)} $\alpha_{\Delta X} (g,  f)=\imath (g)\circ f$.

{\rm (3)} The following diagram is commutative {\rm(}$\imath$ is a $G$-map {\rm(}not continuous{\rm))}
$$\begin{array}{ccc}
\quad G\times G & \stackrel{{\id\times \imath}}\hookrightarrow & G\times X^X \\
 \cdot  \downarrow &   & \quad \downarrow \alpha_{\Delta X}\\
\quad G & \stackrel{\imath}\hookrightarrow & X^X.
\end{array}$$
\end{pro}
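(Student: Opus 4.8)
The plan is to derive all three parts directly from the action axioms together with the standing assumption (stated in \S~\ref{admtop}) that the action is effective. First I would treat (1). The heart of the matter is that the two action identities $\alpha(e,x)=x$ and $\alpha(g_1 g_2, x)=\alpha(g_1,\alpha(g_2,x))$ translate, under $\imath$, into $\imath(e)=\id_X$ and $\imath(g_1 g_2)=\imath(g_1)\circ\imath(g_2)$. Indeed, evaluating the candidate identity at an arbitrary $x\in X$ gives $(\imath(g_1)\circ\imath(g_2))(x)=\imath(g_1)(\imath(g_2)(x))=\alpha(g_1,\alpha(g_2,x))=\alpha(g_1 g_2,x)=\imath(g_1 g_2)(x)$. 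Thus $\imath$ is a homomorphism from $(G,\cdot)$ into $(X^X,\circ)$, so its image $\imath(G)$ is closed under composition and is a subsemigroup of $X^X$ (in fact a subgroup, being a homomorphic image of a group). Injectivity is precisely effectiveness: if $\imath(g)=\imath(h)$, then $\alpha(h^{-1}g,x)=x$ for every $x\in X$, so $h^{-1}g$ lies in the kernel of the action, which is $\{e\}$; hence $g=h$ and $\imath$ is an isomorphism of $G$ onto $\imath(G)$.

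For (2) I would simply unwind the definition $(\Delta)$ of the diagonal action in the case where the index set $\mathcal T$ is $X$ itself and every factor $X_t$ equals $X$ with the same action $\alpha$, so that $\prod_{t\in\mathcal T}X_t=X^X$. For $g\in G$ and $f\in X^X$, formula $(\Delta)$ yields $\alpha_{\Delta X}(g,f)(x)=\alpha(g,f(x))$ for each $x\in X$, while $(\imath(g)\circ f)(x)=\imath(g)(f(x))=\alpha(g,f(x))$. Since the two maps agree in every coordinate, $\alpha_{\Delta X}(g,f)=\imath(g)\circ f$.

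Part (3) is then a formal consequence of (1) and (2). Chasing the diagram, commutativity is the assertion $\imath(g_1\cdot g_2)=\alpha_{\Delta X}(g_1,\imath(g_2))$ for all $g_1,g_2\in G$; by (2) the right-hand side equals $\imath(g_1)\circ\imath(g_2)$, and by (1) this equals $\imath(g_1 g_2)$, as required. I expect no genuine obstacle here, since the whole proposition is bookkeeping around the definitions. The only points demanding care are the composition convention---because $(X^X,\circ)$ is used as a right topological semigroup, one must keep the order of composition consistent with $\alpha(g_1 g_2,x)=\alpha(g_1,\alpha(g_2,x))$, so that $\imath$ comes out as a homomorphism rather than an anti-homomorphism---and the reminder that $\imath$ is a map of underlying sets only, the lack of continuity being explicitly disclaimed in the statement of (3).
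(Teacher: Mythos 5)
Your proof is correct and follows essentially the same route as the paper's: a pointwise verification that $\imath$ is a homomorphism with injectivity coming from effectiveness for (1), a coordinate-wise unwinding of the definition $(\Delta)$ for (2), and the formal chaining of (1) and (2) for (3). The only difference is cosmetic --- you spell out the kernel argument for injectivity, which the paper dismisses in one line ("Since the action is effective $\imath$ is injective").
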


\begin{proof} (1) Since the action is effective $\imath$ is injective. $\forall\ x\in X$ 
$$(\imath (g)\circ \imath (h))(x)=\imath (g)(\imath (h)(x))\stackrel{(\imath)}{=}\imath (g)(\alpha (h, x)))\stackrel{(\imath)}{=}\alpha (g, \alpha(h, x)))=\alpha (g\cdot h, x)\stackrel{(\imath)}{=}\imath (g\cdot h)(x).$$
Thus, $\imath (g\cdot h)=\imath (g)\circ \imath (h)$ and on $\imath (G)$ the stucture of a group (with multiplication $\circ$) is correctly defined which is a subsemigroup of $X^X$  (see \S~\ref{semgr} and~\ref{funcsp}). 

(2) Let $f=(f(x))_{x\in X}$, $g\in G$.  $\forall\ x\in X$ 
$$\alpha_{\Delta X} (g,  f)(x)\stackrel{(\Delta)}{=}\alpha (g, f(x))\stackrel{(\imath)}{=}\imath (g) (f(x))=(\imath (g)\circ f)(x).$$
Thus, $\alpha_{\Delta X} (g,  f)=\imath (g)\circ f$.

(3) $\forall\ g, h\in G$ 
$$\imath (g\cdot h)\stackrel{(1)}{=}\imath (g)\circ\imath (h)\stackrel{(2)}{=}\alpha_{\Delta X}(g, \imath (h))=\alpha_{\Delta X}((\id\times \imath) (g, h)).$$
\end{proof}

It is easy to check that if $(G, X_t, \curvearrowright)$ is a $G$-space ($G$-Tychonoff and $\mathcal U_t$ is the correspondent equiuniformity on $X_t$), $t\in\mathcal T$, then $(G, \prod\limits_{t\in\mathcal T} X_t, \alpha_{\Delta\mathcal T})$ is a $G$-space ($G$-Tychonoff and the Cartesian product $\mathcal U^p$ of equiuniformities  $\mathcal U_t$,  $t\in\mathcal T$,  is an equiuniformity on $\prod\limits_{t\in\mathcal T}  X_t$). 

\begin{thm}\label{diagactalgcont} {\rm I.} Let  $(G=(G, \tau_p), X, \alpha)$ be a $G$-space. Then

{\rm (1)}  $(X^X, \circ)$ is a right topological semigroup  {\rm(}besides, multiplication on the left is continuous for $f\in C(X)${\rm)}, 

{\rm (2)} $\imath: G\to X^X$ is a topological isomorphism of $G$ onto the topological group $\imath (G)$  which is a subsemigroup of $X^X$  and the restriction of multiplication $\circ$ to $\imath (G)\times X^X$ is continuous {\rm(}besides, the multiplication on the left is  continuous for $f\in\imath (G)$, $f\in C(X)${\rm)}, 

{\rm (3)} $\imath: G\to X^X$  is a continuous $G$-map, 

{\rm (4)} the closure $\cl~\imath (G)$ is an invariant subset of $X^X$ {\rm(}and, hence,  $(G, \cl~\imath (G), \alpha_{\Delta X}|_{G\times\cl~\imath (G)})$ is a $G$-space{\rm)} and a subsemigroup of $X^X$. The restriction of multiplication $\circ$ to $\imath (G)\times\cl~\imath (G)$ is continuous {\rm(}besides, the  multiplication on the left is  continuous for $f\in\cl~\imath (G)$, $f\in C(X)${\rm)}. 

\medskip

{\rm II.} Let  $(G=(G, \tau_p), X, \alpha)$ be a  $G$-Tychonoff space, $\mathcal U_{X}$ is an equiuniformity on $X$ and $(\tilde X, \tilde{\mathcal U}_X)$ is the completion of $(X, \mathcal U_X)$. Then

{\rm (a)} $\mathcal U_X^p|_{\imath (G)}$ is an equiuniformity on $G$, the completion of $(G, \mathcal U_X^p|_{\imath (G)})$ is the closure of $\imath (G)$ in $\tilde X^X$, 

{\rm (b)} for the  $G$-Tychonoff space  $(G=(G, \tau_p), \tilde X, \tilde\alpha)$  $\mathcal U_{\tilde X}^p|_{\tilde\imath (G)}$ is an equiuniformity on $G$,  the completion of $(G, \mathcal U_{\tilde X}^p|_{\tilde\imath (G)})$ is the closure of $\tilde\imath (G)$ in $\tilde X^{\tilde X}$, 

{\rm (c)}  $\mathcal U_X^p|_{\imath (G)}\subset\mathcal U_{\tilde X}^p|_{\tilde\imath (G)}$. 

\medskip

{\rm III.}  Let  $(G=(G, \tau_p), X, \alpha)$ be a  $G$-Tychonoff space, $\mathcal U_{X}\in\mathbb{BU}(X)$ and $(b X, \tilde{\mathcal U}_X)$ is the completion of $(X, \mathcal U_X)$. Then

{\rm (a')} $\mathcal U_X^p|_{\imath (G)}\in\mathbb{BU} (G)$, the completion of $(G, \mathcal U_X^p|_{\imath (G)})$ {\rm(}the compactification of $G${\rm)} is the closure of $\imath (G)$ in $(b X)^X$, 

{\rm (b')} for the  $G$-Tychonoff space  $(G=(G, \tau_p), b X, \tilde\alpha)$  $\mathcal U_{b X}^p|_{\tilde\imath (G)}\in\mathbb{E} (G)$, the completion of $(G, \mathcal U_{b X}^p|_{\tilde\imath (G)})$ {\rm(}the proper enveloping Ellis semigroup compactification of $G${\rm)} is the closure of $\tilde\imath (G)$ in $(b X)^{b X}$ {\rm(}$\cl~\tilde\imath (G)\in\mathfrak{E}(G)${\rm)}, 

{\rm (c')} $\mathcal U_X^p|_{\imath (G)}\subset\mathcal U_{b X}^p|_{\tilde\imath (G)}$, $(\cl~\imath (G), \imath)\leq (\cl~\tilde\imath (G), \tilde\imath)$. 

\medskip

{\rm IV.} Equiuniformity $\mathcal U_X^p|_{\imath (G)}$  {\rm(}respectively, $\mathcal U_{\tilde X}^p|_{\tilde\imath (G)}${\rm)} is the least uniformity $\mathcal U$ on $G$ such that the orbit maps $\alpha_x: (G, \mathcal U)\to (X, \mathcal U_X)$, $x\in X$ {\rm(}respectively,  $\tilde\alpha_x: (G, \mathcal U)\to (\tilde X, \tilde{\mathcal U}_X)$, $x\in\tilde X${\rm)}  are uniformly continuous. Hence, $\mathcal U_X^p|_{\imath (G)}$ {\rm(}respectively,  $\mathcal U_{\tilde X}^p|_{\tilde\imath (G)}${\rm)} is the initial uniformity with respect to the correspondent family of maps.
\end{thm}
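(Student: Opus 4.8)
The plan is to treat the four parts in order, reducing everything to two engines: Proposition~\ref{diagactalg} (which identifies $\imath$ and the composition $\circ$ with the diagonal action) and the observation recorded just before the theorem that the diagonal action $\alpha_{\Delta X}$ on the product $G$-space $X^X=\prod_{x\in X}X$ is again a (continuous) $G$-space, with $\mathcal U_X^p$ an equiuniformity. For part I, statement (1) is exactly the content of \S~\ref{funcsp}. For (2), Proposition~\ref{diagactalg}(1) supplies the algebraic isomorphism, and that $\imath$ is a homeomorphism onto $\imath(G)$ is the coincidence of $\tau_p$ on $G$ with the t.p.c.\ induced from $X^X$ via $G\subset X^X$ recorded in \S~\ref{admtop}, using $\pr_x\circ\imath=\alpha_x$. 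Joint continuity of $\circ$ on $\imath(G)\times X^X$ I would obtain by rewriting it through Proposition~\ref{diagactalg}(2) as the diagonal action $\alpha_{\Delta X}\colon G\times X^X\to X^X$, which is continuous because $(G,X^X,\alpha_{\Delta X})$ is a $G$-space; left-continuity for $f\in C(X)$ is (1). Statement (3) then follows: continuity from (2), and the $G$-map property is precisely the commutative diagram of Proposition~\ref{diagactalg}(3). For (4), invariance of $\cl~\imath(G)$ holds since each homeomorphism $\alpha_{\Delta X}(g,-)$ carries the invariant set $\imath(G)$ into itself, hence its closure into itself.

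The step I expect to be the genuine obstacle is the claim in (4) that $\cl~\imath(G)$ is a \emph{subsemigroup}, because $X^X$ is only right topological and one cannot use left-continuity of arbitrary elements. I would argue in two stages. First, for $g\in G$ the left translation $h\mapsto \imath(g)\circ h$ is continuous by (1), as $\imath(g)\in C(X)$, so $\imath(G)\bullet\cl~\imath(G)\subseteq\cl(\imath(G)\bullet\imath(G))=\cl~\imath(G)$. Second, for fixed $b\in\cl~\imath(G)$ the right translation $\rho_b$ is continuous and $\rho_b(\imath(G))=\imath(G)\bullet b\subseteq\cl~\imath(G)$ by the first stage, whence $\rho_b(\cl~\imath(G))\subseteq\cl~\imath(G)$. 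Combining gives $\cl~\imath(G)\bullet\cl~\imath(G)\subseteq\cl~\imath(G)$, and continuity of $\circ$ on $\imath(G)\times\cl~\imath(G)$ is the restriction of (2).

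For parts II and III I would transport the uniform structure through $\imath$. Since $\mathcal U_X^p$ is an equiuniformity on $X^X$ (product of equiuniformities) and $\imath(G)$ is invariant by I(4), its restriction is an equiuniformity, yielding one on $G$ via the topological isomorphism $\imath$; because completion commutes with products and with passage to closures of subspaces, its completion is $\cl~\imath(G)$ formed in $\tilde X^X$ (resp.\ $(bX)^X$), giving (a) and (a'). The same applied to the extended $G$-Tychonoff space $(G,\tilde X,\tilde\alpha)$ gives (b) and (b'), and in the totally bounded case $\mathcal U_X^p$ is totally bounded (product of totally bounded uniformities), so $\mathcal U_X^p|_{\imath(G)}\in\mathbb{BU}(G)$. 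For (b') I must match $\cl~\tilde\imath(G)$ against Definition~\ref{DEF}: $(bX)^{bX}$ is a compact right topological semigroup, $\cl~\tilde\imath(G)$ is a compact invariant subsemigroup by I(4), $\tilde\imath$ is a topological isomorphism of $G$ onto the dense subgroup $\tilde\imath(G)$ — here I invoke Lemma~\ref{l1} to know $\tilde\tau_p=\tau_p$ — and multiplication is continuous on $\tilde\imath(G)\times\cl~\tilde\imath(G)$ by I(4); hence $\cl~\tilde\imath(G)\in\mathfrak{E}(G)$ and $\mathcal U_{bX}^p|_{\tilde\imath(G)}\in\mathbb{E}(G)$ through Proposition~\ref{isomorph}.

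Part IV is the cleanest. By definition $\mathcal U_X^p$ is the coarsest uniformity on $X^X$ making every $\pr_x$ uniformly continuous, with basic entourages indexed by $\sigma\in\Sigma_X$; since $\pr_x\circ\imath=\alpha_x$, its restriction transported to $G$ has base $\{(g,h): (\alpha_x(g),\alpha_x(h))\in {\rm U}_x,\ x\in\sigma\}$ over $\sigma\in\Sigma_X$, ${\rm U}_x\in\mathcal U_X$. Any $\mathcal U$ rendering all $\alpha_x$ uniformly continuous must contain these entourages, while $\mathcal U_X^p|_{\imath(G)}$ itself renders them uniformly continuous, so it is the least such and hence the initial uniformity for $\{\alpha_x\}$; the $\tilde X$-case is verbatim with $\tilde\alpha_x$. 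The inclusions (c),(c') then drop out of IV: because $\tilde\alpha_x=\alpha_x$ on $X\subseteq\tilde X$, the family $\{\tilde\alpha_x:x\in\tilde X\}$ subsumes $\{\alpha_x:x\in X\}$, so the initial uniformity for the larger family is finer; and the continuous precomposition map $(bX)^{bX}\to(bX)^X$ (along $X\hookrightarrow bX$) restricts to a continuous map $\cl~\tilde\imath(G)\to\cl~\imath(G)$ over $G$, yielding $(\cl~\imath(G),\imath)\leq(\cl~\tilde\imath(G),\tilde\imath)$. Apart from the right-topological closure argument of I(4), the whole proof is bookkeeping with initial uniformities, products, and completions.
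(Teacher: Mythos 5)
Your proposal is correct and follows essentially the same route as the paper's proof: part I is reduced to Proposition~\ref{diagactalg} together with continuity of the diagonal action, the subsemigroup property of $\cl~\imath(G)$ is obtained exactly as in the paper by combining invariance of the closure (left translation by elements of $\imath(G)$) with continuity of right translations in the right topological semigroup, parts II--III transport the product equiuniformity along $\imath$ using that completions commute with products and that completions of subspaces are closures in completions (with Lemma~\ref{l1} supplying $\tilde\tau_p=\tau_p$ for (b')), and part IV is the same initial-uniformity argument. The only cosmetic difference is that you deduce (c) and (c$'$) from IV rather than from the uniformly continuous restriction map $(\tilde X)^{\tilde X}\to(\tilde X)^{X}$ as the paper does; the two formulations are equivalent.
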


\begin{proof} {\rm I}. (1) is shown in  \S~\ref{funcsp}.

(2) From \S~\ref{admtop} and Proposition~\ref{diagactalg} one has that $\imath$ is a topological isomorphism of $G$ onto the subsemigroup of $X^X$. The diagram 
$$\begin{array}{ccl}
G\times X^X & \stackrel{\imath\times\id}{\longrightarrow} & \imath (G)\times X^X \\
\quad \alpha_{\Delta X} \searrow &   &  \swarrow \circ \\
 & X^X & 
\end{array}$$
is commutative by item (2) of Proposition~\ref{diagactalg} and $\imath\times\id$ is a homeomorphism. Continuity of $\alpha_{\Delta X}$ yields continuity of $\circ$. (Continuity of the diagonal action $\alpha_{\Delta X}: G\times X^X\to X^X$ is equivalent to continuity of the multiplication $\circ: \imath (G)\times X^X\to X^X$.)

(3) follows from item (3) of Proposition~\ref{diagactalg}.

(4) Since the diagonal action $ \alpha_{\Delta X}$ is continuous and $\imath (G)$ is an invariant subset  of $X^X$ the closure $\cl~\imath (G)$ is an invariant subset of $X^X$ and, hence, by Proposition~\ref{diagactalg}, 
$$\imath (G)\circ\cl~\imath (G)=\bigcup\{\imath (g)\circ f\ |\ g\in G, f\in \cl~\imath (G)\}=\bigcup\{\alpha_{\Delta X} (g, f)\ |\ g\in G, f\in\cl~\imath (G)\}=\cl~\imath (G).$$

Since $\imath (G)\circ f\subset\cl~\imath (G)$, $f\in \cl~\imath (G)$, and $X^X$ is a right topological semigroup, $\cl~\imath (G)\circ f\subset\cl~\imath (G)$, $f\in\cl~\imath (G)$. Hence, 
$$\cl~\imath (G)\circ\cl~\imath (G)=\cl~\imath (G)\ \mbox{and}\ \cl~\imath (G)\ \mbox{is a subsemigroup of}\  X^X.$$
The second statement follows from item (2).

\medskip

{\rm II} (a), {\rm III}  (a').  The restriction  $\mathcal U_X^p|_{\imath (G)}$ of the equiuniformity $\mathcal U_X^p$  on the invariant subset $\imath (G)$ is an equiuniformity on $\imath (G)$. If $\mathcal U_X\in\mathbb{BU}_X$, then $\mathcal U_X^p$  and its restriction $\mathcal U_X^p|_{\imath (G)}$ are totally bounded. The equivalence of $G$-spaces $(G, G, \cdot)$ and $(G, \imath (G), \alpha_{\Delta X}|_{G\times\imath (G)})$ and the fact that the completions of uniform subspaces are their closures in the completions of the correspondent uniform spaces are equal finishes the proof. 

The proofs of {\rm II} (b) and {\rm III} (b') are the same as of {\rm II} (a) and {\rm III} (a'), taking into consideration the fact that  $\tilde X^{\tilde X}$ and $(b X)^{b X}$ are complete spaces, semigroups and by {\rm I} (4)  the closures of $G$ are invariant subsets. 

{\rm III} (c) follows from the uniform continuity of the projection of $(\tilde X, \tilde{\mathcal U}_X)^{\tilde X}$ onto $(\tilde X, \tilde{\mathcal U}_X)^{X}$,  that  $(X, \mathcal U_X)^{X}$ is a uniform subspace of  $(\tilde X, \tilde{\mathcal U}_X)^{X}$ and commutativity of the diagram
$$\begin{array}{ccl}
(\tilde X, \tilde{\mathcal U}_X)^{\tilde X} \stackrel\pr {\longrightarrow} &  (\tilde X, \tilde{\mathcal U}_X)^{X} & \hookleftarrow X^X \\
\imath' \nwarrow & \tilde\imath \uparrow  &  \nearrow \imath \\
 & G & 
\end{array}$$
{\rm III} (c') is proved similarly. The inclusion  $\mathcal U_X^p|_{\imath (G)}\subset\mathcal U_{b X}^p|_{\tilde\imath (G)}$  yields the map of the corresponding compactifications. 

\medskip

{\rm IV}. For arbitrary $x\in X$ and ${\rm U}\in\mathcal U_X$ take ${\rm V}=\{(g, h)\in G\times G\ |\ (\alpha (g, x), \alpha (h, x))\in {\rm U}\}\in\mathcal U_X^p|_{\imath (G)}$. If $(g, h)\in {\rm V}$, then $(\alpha (g, x), \alpha (h, x))\in {\rm U}$ and $\alpha_x: (G, \mathcal U_X^p|_{\imath (G)})\to (X, \mathcal U_{X})$ is uniformly continuous.  The same argument works in the case of the completion $\tilde X$. 

If $\mathcal U$ is an uniformity on $G$ and the orbit maps $\alpha_x: (G, \mathcal U)\to (X, \mathcal U_{X})$, $x\in X$, are uniformly continuous, then the map $\imath: (G, \mathcal U)\to (\imath (G), \mathcal U_X^p|_{\imath (G)})$ is uniformly continuous. Thus,  $\mathcal U_X^p|_{\imath (G)}\subset\mathcal U$. The same argument works in the case of the completion $\tilde X$.
\end{proof}

\bigskip

(B) Let $(G=(G, \tau_p), X, \alpha)$ be a $G$-space, $q_{\sigma}: G\to G/\st_{\sigma}$ is the quotient map, $\sigma\in\Sigma_X$.

The following diagram (the maps are continuous) is commutative for any $\sigma\in\Sigma_X$
$$\begin{array}{ccc}
\quad G & \stackrel{\imath}\hookrightarrow & X^X  \\
 q_{\sigma}  \downarrow &   & \quad \downarrow \pr_{\sigma}\\
\quad G/\st_{\sigma} & \stackrel{\varphi_{\sigma} }\longrightarrow & X^{\sigma}, 
\end{array}\eqno{(d)}$$
where $\varphi_{\sigma}  (g \st_{\sigma})(x)=\alpha (g, x)$, $x\in\sigma$ ($\varphi_{\sigma}$ is correctly defined since if $g'=gh$, $h\in\st_{\sigma}$, then  $\alpha (g', x)=\alpha (gh, x)=\alpha (g, \alpha (h, x))=\alpha (g, x)$). Considering all spaces as $G$-spaces (with the action of $G$ on $G$ and $G/\st_{\sigma}$ by multiplication on the left and the diagonal actions (from the action $\alpha$) on products) the maps in the diagram $(d)$ are $G$-maps. 

If  $(G=(G, \tau_p), X, \alpha)$ is a $G$-Tychonoff space and $\mathcal U_X$ is an equiuniformity on $X$, then the uniformity  $\mathcal U_X^p|_{\imath (G)}$ on $\imath (G)$ has a spectral representation. It is initial with respect to the restrictions to $\imath (G)$ of the projections $\pr_{\sigma}: X^X\to (X^{\sigma}, \mathcal U_X^{\sigma})$ (products $X^{\sigma}$ with  the Cartesian product of equiuniformities $\mathcal U_X$). 

$(X^X, \mathcal U_X^p)$ is uniformly equivalent to the {\it inverse limit}\ $\lim\limits_{\longleftarrow} S$ of the inverse spectrum $S=\{(X^{\sigma}, \mathcal U_X^{\sigma}), \pr^{\sigma'}_{\sigma}, \Sigma_X\}$ (the proof is standard and the case of homeomorphism is in~\cite[Example~2.5.3]{Engelking}). Therefore, the completion of $(G, \mathcal U_X^p|_{\imath (G)})$ is uniformly equivalent to the limit of the inverse spectrum $\{\widetilde{\pr_{\sigma} (\imath (G))}, \widetilde{\pr^{\sigma'}_{\sigma}}, \Sigma_X\}$, where $(\widetilde{\pr_{\sigma} (\imath (G))}, \widetilde{\mathcal U}_X^{\sigma}|_{\pr_{\sigma} (\imath (G))})$ is the completion of $(\pr_{\sigma} (\imath (G)), \mathcal U_X^{\sigma}|_{\pr_{\sigma} (\imath (G))})$, 
$\widetilde{\pr^{\sigma'}_{\sigma}}:\widetilde{\pr_{\sigma'} (\imath (G))}\to\widetilde{\pr_{\sigma} (\imath (G))}$ is the extension of the restriction to $\pr_{\sigma'}(\imath(G))$  of the projection $\pr^{\sigma'}_{\sigma}: X^{\sigma'}\to X^{\sigma}$, $\sigma\subset\sigma'$.

The maximal equiuniformity $\mathcal U_{G/\st_{\sigma}}$ on $G/\st_{\sigma}$ is the quotient of the right uniformity $R$ on $G$ (follows from~\cite{RD} and~\cite{ChK}).

\begin{pro}\label{lemincl}
Let $(G=(G, \tau_p), X, \alpha)$ be a $G$-Tychonoff space, $\mathcal U_X$ is an equiuniformity on $X$. Then for any $\sigma\in\Sigma_X$ the map $\varphi_{\sigma}: (G/\st_{\sigma}, \mathcal U_{G/\st_{\sigma}})\to ((\pr_{\sigma}\circ\imath) (G), \mathcal U_X^{\sigma}|_{\pr_{\sigma}(\imath (G))})$ is a uniformly continuous bijection. Hence, 
$$\mathcal U_X^p|_{\imath (G)}\subset R_{\Sigma_X}\ (\mbox{identifying}\ G=\imath (G)).$$ 
\end{pro}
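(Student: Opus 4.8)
The plan is to prove the two assertions of Proposition~\ref{lemincl} in the order stated: first that $\varphi_{\sigma}$ is a uniformly continuous bijection, and then to deduce $\mathcal U_X^p|_{\imath (G)}\subset R_{\Sigma_X}$ from the fact that both uniformities are presented as initial uniformities with respect to families of maps related through the commutative diagram $(d)$. The bijectivity is the easy part. Surjectivity onto the image $(\pr_{\sigma}\circ\imath)(G)=\varphi_{\sigma}(G/\st_{\sigma})$ is immediate, and injectivity follows from the stabilizer condition: if $\varphi_{\sigma}(g\st_{\sigma})=\varphi_{\sigma}(g'\st_{\sigma})$ then $\alpha(g,x)=\alpha(g',x)$ for all $x\in\sigma$, whence $g^{-1}g'$ fixes every point of $\sigma$, i.e. $g^{-1}g'\in\st_{\sigma}$ and the two cosets coincide.

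The heart of the argument is uniform continuity of $\varphi_{\sigma}$, and here the essential input is the \emph{boundedness} of the equiuniformity $\mathcal U_X$. I would first extract the following auxiliary fact: for every cover $u\in\mathcal U_X$ there is $O\in N_G(e)$ such that, for each $y\in X$, the point $y$ together with all points $\alpha(o,y)$, $o\in O$, lies in a single member of $u$. This is obtained directly from boundedness: choose $O\in N_G(e)$ and $v\in\mathcal U_X$ with $Ov\succ u$; for a given $y$ pick $V\in v$ with $y\in V$, and observe that $\alpha(O,V)$ contains both $y=\alpha(e,y)$ (since $e\in O$) and every $\alpha(o,y)$, and is contained in one member of $u$. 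Equivalently, in entourage form: for every ${\rm U}\in\mathcal U_X$ there is $O\in N_G(e)$ with $(\alpha(o,y),y)\in {\rm U}$ for all $o\in O$ and all $y\in X$.

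With this fact, uniform continuity of $\varphi_{\sigma}$ is routine. Since $\sigma$ is finite, a basic entourage of the product uniformity $\mathcal U_X^{\sigma}$ is determined by a single ${\rm U}\in\mathcal U_X$, namely the pairs $(f,f')$ with $(f(x),f'(x))\in {\rm U}$ for all $x\in\sigma$. Given ${\rm U}$, take a symmetric ${\rm W}\in\mathcal U_X$ with ${\rm W}\circ{\rm W}\subset {\rm U}$ and apply the auxiliary fact to ${\rm W}$ to get $O\in N_G(e)$. If $g_1\st_{\sigma}$ and $g_2\st_{\sigma}$ lie in a common member $O(g\st_{\sigma})$ of the basic cover of $\mathcal U_{G/\st_{\sigma}}$, write $g_i=o_ig h_i$ with $o_i\in O$ and $h_i\in\st_{\sigma}$; then for $x\in\sigma$ the factor $h_i$ fixes $x$, so $\varphi_{\sigma}(g_i\st_{\sigma})(x)=\alpha(g_i,x)=\alpha(o_i,\alpha(g,x))$. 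Setting $y=\alpha(g,x)$, the two ${\rm W}$-estimates $(\alpha(o_1,y),y),(y,\alpha(o_2,y))\in {\rm W}$ combine through ${\rm W}\circ{\rm W}\subset {\rm U}$ to give $(\varphi_{\sigma}(g_1\st_{\sigma})(x),\varphi_{\sigma}(g_2\st_{\sigma})(x))\in {\rm U}$ for every $x\in\sigma$. This is exactly the required uniform continuity of $\varphi_{\sigma}$.

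Finally, for the inclusion I would invoke the two initial-uniformity descriptions recalled before the statement. Since $R_{\Sigma_X}$ is initial with respect to the quotient maps $q_{\sigma}\colon G\to (G/\st_{\sigma},\mathcal U_{G/\st_{\sigma}})$, each $q_{\sigma}$ is uniformly continuous out of $(G,R_{\Sigma_X})$; composing with the uniformly continuous $\varphi_{\sigma}$ and using the commutativity $\pr_{\sigma}\circ\imath=\varphi_{\sigma}\circ q_{\sigma}$ of $(d)$ shows that every $\pr_{\sigma}\circ\imath\colon (G,R_{\Sigma_X})\to (X^{\sigma},\mathcal U_X^{\sigma})$ is uniformly continuous. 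Since $\mathcal U_X^p|_{\imath (G)}$ is the least uniformity making all the maps $\pr_{\sigma}\circ\imath$ uniformly continuous, its universal property yields $\mathcal U_X^p|_{\imath (G)}\subset R_{\Sigma_X}$ (identifying $G$ with $\imath (G)$). The main obstacle is the uniform continuity of $\varphi_{\sigma}$; everything else is formal, and the only hypothesis genuinely used beyond the diagram is boundedness of $\mathcal U_X$, which supplies the uniform-in-$y$ control of $\alpha(O,y)$.
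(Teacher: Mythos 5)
Your proof is correct, and its overall skeleton --- first establish that $\varphi_{\sigma}$ is a uniformly continuous bijection, then deduce the inclusion from the initiality of $R_{\Sigma_X}$ with respect to the quotient maps $q_{\sigma}$ and of $\mathcal U_X^p|_{\imath (G)}$ with respect to the maps $\pr_{\sigma}|_{\imath (G)}$, via commutativity of $(d)$ --- coincides with the paper's; indeed your final paragraph is essentially the paper's second paragraph. The genuine difference is in how uniform continuity of $\varphi_{\sigma}$ is obtained. The paper argues structurally: $\mathcal U_X^{\sigma}|_{\pr_{\sigma}(\imath (G))}$ is an equiuniformity on the $G$-space $\pr_{\sigma}(\imath (G))$, $\varphi_{\sigma}$ is a bijective $G$-map, and $\mathcal U_{G/\st_{\sigma}}$ is the \emph{maximal} equiuniformity on $G/\st_{\sigma}$ (the quotient of the right uniformity $R$), so the pullback of the target uniformity along $\varphi_{\sigma}$ is automatically coarser than $\mathcal U_{G/\st_{\sigma}}$, which is exactly uniform continuity. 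You instead verify the estimate by hand: boundedness of $\mathcal U_X$ yields the uniform bound $(\alpha(o,y),y)\in{\rm W}$ for all $o\in O$ and all $y\in X$, and a symmetric ${\rm W}$ with ${\rm W}\circ{\rm W}\subset{\rm U}$ finishes the computation against the explicit cover base $\{O(g\st_{\sigma})\ |\ g\in G\}$ of $\mathcal U_{G/\st_{\sigma}}$. In effect you re-prove, in the special case needed here, the maximality property that the paper merely cites. What your route buys: it is self-contained (it does not presuppose the description of maximal equiuniformities on coset spaces beyond the cover base) and it makes transparent that only the boundedness half of the equiuniformity axioms, not saturation, enters this step. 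What the paper's route buys: brevity, and it situates the proposition inside the structural framework of equiuniformities on coset spaces and their maximality, which is reused immediately afterwards in Corollary~\ref{coinunif1} and Proposition~\ref{a2-2}.
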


\begin{proof} Immediately follows from the commutativity of the diagram $(d)$, the condition that the restriction $\mathcal U^{\sigma}|_{\pr_{\sigma}(\imath (G))}$ is an equiuniformity which is the restriction of the quotient uniformity of $\mathcal U_X^p$ and that the maximal equiuniformity on $G/\st_{\sigma}$ is the quotient of the right uniformity $R$ on $G$.

Since $ R_{\Sigma_X}$ is initial with respect to the quotient maps $q_{\sigma}: G\to (G/\st_{\sigma}, \mathcal U_{G/\st_{\sigma}})$,  see \S~\ref{uniftopgr}, and $\mathcal U_X^p|_{\imath (G)}$  is initial with respect to the restrictions to $\imath (G)$ of the projections $\pr_{\sigma}: X^X\to (X^{\sigma}, \mathcal U_X^{\sigma})$ one has $\mathcal U_X^p|_{\imath (G)}\subset R_{\Sigma_X}$. 
\end{proof}

\begin{cor}\label{coinunif1} Let $(G=(G, \tau_p), X, \alpha)$ be a $G$-Tychonoff space, $\mathcal U_X$ is an equiuniformity on $X$. If the uniformities on $G$ which are initial with respect to the quotient maps $q_{\sigma}: G\to (G/\st_{\sigma}, \mathcal U_{G/\st_{\sigma}})$, $\sigma\in\Sigma_X$, and the maps $\pr_{\sigma}|_{\imath (G)}\circ\imath: G\to (X^{\sigma}, \mathcal U_X^{\sigma})$ correspondently, coincide {\rm(}in particular, 
$$\begin{array}{c}
\mbox{for any}\ \sigma\in\Sigma_X  \\
\varphi_{\sigma}: (G/\st_{\sigma}, \mathcal U_{G/\st_{\sigma}})\to ((\pr_{\sigma}\circ\imath) (G), \mathcal U_X^{\sigma}|_{(\pr_{\sigma}\circ\imath) (G)})\ \mbox{is a uniform equivalence} {\rm)}, 
\end{array}\eqno{(op)}$$
then

{\rm (a)} $\mathcal U_X^p|_{\imath (G)}=R_{\Sigma_X}\ (\mbox{identifying}\ G=\imath (G))$,

{\rm (b)} for any $\mathcal U_X\subset\mathcal U'_X$ {\rm(}in particular, the maximal equiuniformity on $X${\rm)} ${\mathcal U'}_X^p|_{\imath (G)}=R_{\Sigma_X}\ (\mbox{identifying}\ G=\imath (G))$ {\rm(}in particular, the map  $\varphi_{\sigma}: (G/\st_{\sigma}, \mathcal U_{G/\st_{\sigma}})\to (\pr_{\sigma}(\imath (G)), {\mathcal U'}_X^{\sigma}|_{\pr_{\sigma}(\imath (G))})$, $\sigma\in\Sigma_X$,  is a uniform equivalence{\rm)}. 
\end{cor}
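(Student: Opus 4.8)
The plan is to read conclusion (a) as a direct consequence of two spectral identifications that are already in place, and then to obtain (b) by sandwiching between the inclusion of Proposition~\ref{lemincl} and monotonicity of the uniformity of pointwise convergence.

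First I would recall the two descriptions that give the relevant initial uniformities their names. By \S~\ref{uniftopgr} the uniformity $R_{\Sigma_X}$ is precisely the initial uniformity on $G$ with respect to the family of quotient maps $\{q_{\sigma}: G\to (G/\st_{\sigma}, \mathcal U_{G/\st_{\sigma}})\ |\ \sigma\in\Sigma_X\}$, while the spectral representation preceding Proposition~\ref{lemincl} identifies $\mathcal U_X^p|_{\imath (G)}$ with the initial uniformity with respect to the family $\{\pr_{\sigma}|_{\imath (G)}\circ\imath: G\to (X^{\sigma}, \mathcal U_X^{\sigma})\ |\ \sigma\in\Sigma_X\}$. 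Under these identifications the hypothesis of the corollary is \emph{exactly} the equality of these two uniformities, so (a) is immediate. To see that (op) is a sufficient condition, I would use the commutativity of diagram $(d)$, that is $\pr_{\sigma}\circ\imath=\varphi_{\sigma}\circ q_{\sigma}$: if each $\varphi_{\sigma}$ is a uniform equivalence, then postcomposing $q_{\sigma}$ with it does not change the pulled-back uniformity, so the initial uniformity with respect to $\{\pr_{\sigma}\circ\imath\}$ agrees term-by-term, hence after taking the supremum over $\sigma\in\Sigma_X$, with the initial uniformity with respect to $\{q_{\sigma}\}$.

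For (b) I would apply Proposition~\ref{lemincl} to the equiuniformity $\mathcal U'_X$ (it applies to any equiuniformity, and the maximal equiuniformity on $X$ is one such, being the largest), obtaining ${\mathcal U'}_X^p|_{\imath (G)}\subset R_{\Sigma_X}$. For the reverse inclusion I would invoke monotonicity of the uniformity of pointwise convergence in its base uniformity: from $\mathcal U_X\subset\mathcal U'_X$ one gets $\mathcal U_X^p\subset{\mathcal U'}_X^p$, and restricting to the invariant subset $\imath (G)$ preserves this, so $\mathcal U_X^p|_{\imath (G)}\subset{\mathcal U'}_X^p|_{\imath (G)}$. Combining with part (a) gives $R_{\Sigma_X}=\mathcal U_X^p|_{\imath (G)}\subset{\mathcal U'}_X^p|_{\imath (G)}\subset R_{\Sigma_X}$, whence the asserted equality ${\mathcal U'}_X^p|_{\imath (G)}=R_{\Sigma_X}$.

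Finally, for the parenthetical claim in (b) --- that each $\varphi_{\sigma}$ is a uniform equivalence for $\mathcal U'_X$ --- I would argue under the (op) hypothesis by a monotone-pullback sandwich on a single coordinate $\sigma$. Proposition~\ref{lemincl} already furnishes that $\varphi_{\sigma}$ is a uniformly continuous bijection for $\mathcal U'_X$, i.e. the uniformity $\varphi_{\sigma}^{*}({\mathcal U'}_X^{\sigma}|_{\pr_{\sigma}(\imath (G))})$ initial with respect to $\varphi_{\sigma}$ satisfies $\varphi_{\sigma}^{*}({\mathcal U'}_X^{\sigma}|_{\pr_{\sigma}(\imath (G))})\subset\mathcal U_{G/\st_{\sigma}}$. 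On the other hand $\mathcal U_X\subset\mathcal U'_X$ yields $\mathcal U_X^{\sigma}|_{\pr_{\sigma}(\imath (G))}\subset{\mathcal U'}_X^{\sigma}|_{\pr_{\sigma}(\imath (G))}$, and since pullback is monotone, $\varphi_{\sigma}^{*}({\mathcal U'}_X^{\sigma}|_{\pr_{\sigma}(\imath (G))})\supset\varphi_{\sigma}^{*}(\mathcal U_X^{\sigma}|_{\pr_{\sigma}(\imath (G))})=\mathcal U_{G/\st_{\sigma}}$, the last equality being exactly (op) for $\mathcal U_X$. The two inclusions force $\varphi_{\sigma}^{*}({\mathcal U'}_X^{\sigma}|_{\pr_{\sigma}(\imath (G))})=\mathcal U_{G/\st_{\sigma}}$, i.e. $\varphi_{\sigma}$ is a uniform equivalence for $\mathcal U'_X$. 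The step I expect to require the most care is the passage from the \emph{aggregate} equality of initial uniformities to statements about individual coordinates $\sigma$: the general (weaker) hypothesis controls only the supremum over $\Sigma_X$, so the coordinatewise conclusions in the parentheticals genuinely need the stronger (op) form of the hypothesis together with the directedness of $\Sigma_X$, and I would keep the two levels of hypothesis cleanly separated throughout.
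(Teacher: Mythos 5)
Your proof is correct and takes essentially the same route the paper intends: the corollary is stated without proof precisely because, as you observe, the hypothesis is literally the equality $R_{\Sigma_X}=\mathcal U_X^p|_{\imath (G)}$ once one invokes the two initial-uniformity (spectral) descriptions from \S~\ref{uniftopgr} and part (B) of \S~\ref{digonal}, and your sandwich for (b) — Proposition~\ref{lemincl} applied to $\mathcal U'_X$ on one side, monotonicity of the pointwise-convergence uniformity plus (a) on the other — is the intended completion. Your closing caution that the coordinatewise parenthetical in (b) rests on the stronger (op) hypothesis (proved by your per-$\sigma$ pullback sandwich) rather than on the aggregate equality alone is well placed, and matches how the corollary is actually used later in Proposition~\ref{a2-2}, where it is (op) that gets verified.
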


\begin{rem}{\rm Since the quotient maps  $q_{\sigma}: G\to G/\st_{\sigma}$ are open, if the condition (op) of Corollary~\ref{coinunif1}  is valid, then the restrictions of projections  $\pr_{\sigma}|_{\imath (G)}$, $\sigma\in\Sigma_X$,  are open.}
\end{rem}

\bigskip

(C) Let $(G, X, \curvearrowright)$ be a $G$-Tychonoff space,  $\mathcal U_X$ is an equiuniformity on $X$, $\mathcal U_X^u$ is a u.u.c. and $\tau_u$ is the induced  t.u.c. on $X^X$.  Evidently, ${\mathcal U}_X^{p}\subset\mathcal U_X^u$ and $\tau_u\geq\tau_p$.

It is easy to check that the diagonal action $\alpha_{\Delta X}: G\times (X^X, \tau_u)\to (X^X, \tau_u)$ is continuous, $\mathcal U_X^{u}$ is  an equiuniformity on $X^X$. Its restriction $\mathcal U_X^{u}|_{\imath(G)}$ on the invariant subset $G=\imath (G)$ is an equiuniformity on $G$ and the induced topology $\tau_u$ is the t.u.c. 

\begin{lem}\label{l2}
Let $((G, \tau), X, \curvearrowright)$ be a $G$-Tychonoff space. For every equiuniformity $\mathcal U_X$ on $X$
$$\tau\geq\tau_u.$$
If $\tau=\tau_p$, then $\tau=\tau_u$. 
\end{lem}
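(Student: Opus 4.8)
The plan is to prove the first inequality $\tau\geq\tau_u$ directly from the boundedness of the equiuniformity $\mathcal U_X$, and then to deduce the second assertion formally from the first together with the already-noted inequality $\tau_u\geq\tau_p$. Note that the order convention gives $\tau\geq\tau_u$ the meaning $\tau_u\subset\tau$, so what I must show is that every basic $\tau_u$-neighbourhood of a point of $G$ (identified with $\imath(G)$) is a $\tau$-neighbourhood of that point.

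The crucial preliminary step is to extract from the cover-theoretic boundedness condition the following uniform entourage statement: for every ${\rm U}\in\mathcal U_X$ there is $O\in N_G(e)$ such that $(y, sy)\in {\rm U}$ for all $s\in O$ and all $y\in X$. To obtain it I would first pick a symmetric ${\rm U}'\in\mathcal U_X$ with ${\rm U}'\circ{\rm U}'\subset {\rm U}$ and apply boundedness to the uniform cover $u=\{{\rm U}'(z)\ |\ z\in X\}$, producing $O\in N_G(e)$ and $v\in\mathcal U_X$ with $Ov\succ u$. Then for a fixed $y$, choosing $V\in v$ with $y\in V$ gives $Oy\subset OV\subset {\rm U}'(z)$ for some $z$; since $y\in OV$ as well (taking the unit of $G$), both $y$ and $sy$ lie in the ${\rm U}'$-ball about $z$, whence $(y, sy)\in {\rm U}'\circ{\rm U}'\subset {\rm U}$. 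This is the main obstacle of the proof, being the only place where the cover formulation of boundedness must be converted into an entourage estimate; everything else is formal.

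Granting this, $\tau\geq\tau_u$ follows quickly. I would fix $g_0\in G$ and a basic $\tau_u$-neighbourhood $W=\{h\in G\ |\ (g_0 x, hx)\in {\rm U}\ \text{for all}\ x\in X\}$ of $g_0$, and choose $O\in N_G(e)$ as above for ${\rm U}$. For any $s\in O$ and any $x\in X$, writing $y=g_0 x$ and using that $g_0$ is a bijection of $X$ (so $y$ ranges over all of $X$), I get $(g_0 x, (sg_0)x)=(y, sy)\in {\rm U}$, hence $sg_0\in W$; thus $Og_0\subset W$. Since $\tau$ is a group topology, right translation by $g_0$ is a $\tau$-homeomorphism, so $Og_0$ is a $\tau$-neighbourhood of $g_0$ contained in $W$. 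Therefore $W$ is a $\tau$-neighbourhood, and $\tau\geq\tau_u$.

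Finally, the second statement is immediate. In general $\tau_u\geq\tau_p$ (as recorded in \S~\ref{funcsp}), and by the first part $\tau\geq\tau_u$. If $\tau=\tau_p$, then $\tau_p=\tau\geq\tau_u\geq\tau_p$, which forces equality throughout, so $\tau=\tau_u$.
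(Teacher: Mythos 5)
Your proof is correct, but it takes a genuinely more hands-on route than the paper's. The paper's own proof is two lines: it invokes item (C) of \S~\ref{digonal} --- that the diagonal action $\alpha_{\Delta X}\colon G\times (X^X,\tau_u)\to (X^X,\tau_u)$ is continuous and $\mathcal U_X^{u}$ is an equiuniformity on $X^X$ --- restricts this action to the invariant subset $\imath(G)$, identifies $G=\imath(G)$ via Proposition~\ref{diagactalg}, and concludes that the multiplication action of $(G,\tau)$ on $(G,\tau_u)$ is continuous; evaluating at the unit (the orbit map of $e$ is the identity map of $G$) then gives $\tau\geq\tau_u$. You instead argue directly from the definition of boundedness: your entourage estimate (for every ${\rm U}\in\mathcal U_X$ there is $O\in N_G(e)$ with $(y,sy)\in{\rm U}$ for all $s\in O$ and $y\in X$) is precisely the computation the paper leaves implicit behind ``it is easy to check'' in item (C), and your right-translation step $Og_0\subset W$ replaces the restriction-and-evaluation argument. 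What your version buys: it is self-contained, and it makes explicit that only boundedness of $\mathcal U_X$ (not saturation) is needed for the inequality $\tau\geq\tau_u$. What the paper's version buys: brevity, plus it records on the way the stronger fact that $(G,\tau)$ acts continuously on all of $(X^X,\tau_u)$, which the surrounding text reuses. Your deduction of the second claim from $\tau_u\geq\tau_p$ is identical to the paper's.
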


\begin{proof} The restriction of the diagonal action  on the invariant subset $\imath (G)$ is  continuous. Identifying $G=\imath (G)$ by Proposition~\ref{diagactalg} one has that the action $\cdot$ of $(G, \tau)$ on  $(G, \tau_u)$ is continuous. Hence, $\tau\geq\tau_u$. 

Since $\tau_u\geq\tau_p$, if $\tau=\tau_p$, then $\tau_u=\tau$.
\end{proof}

\begin{rem}
{\rm The u.u.c. $\mathcal U_X^{u}|_{\imath(G)}$ is a right uniformity $R$  on $G$~{\rm\cite{RD}}. The base of $N_G(e)$ is formed by internals of the sets 
$$O_{{\rm U}}=\{g\in G\ |\ (x, \alpha (g, x))\in  {\rm U},\ x\in X\},\ {\rm U}\in\mathcal U_X.$$
$O^{-1}=O$, since $(\alpha (g^{-1}, x), x)\in {\rm U}$, $x\in X$, iff  $(y, \alpha (g, y))\in {\rm U}$, $y{\rm(}=\alpha (g^{-1}, x){\rm)}\in X$.}
\end{rem}

\begin{cor} 
Let $((G, \tau_p), X, \curvearrowright)$ be a $G$-Tychonoff space, $\mathcal U_X$ is an equiuniformity on $X$. Then for any $g\in G$ and any ${\rm U}\in\mathcal U_X$ there exist $\sigma\in\Sigma_X$ and ${\rm V}\in\mathcal U_X$ such that 
$$\mbox{if}\ (\alpha (g, x), \alpha (h, x))\in{\rm V},\ x\in\sigma,\ \mbox{then}\  (\alpha (g, x), \alpha (h, x))\in{\rm U},\ x\in X.$$
\end{cor}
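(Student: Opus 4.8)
The plan is to recognise both the hypothesis and the conclusion of the asserted implication as basic neighbourhoods of $g$ in the two topologies $\tau_p$ and $\tau_u$ on $G$ (viewed through $\imath$ as a subspace of $X^X$, identifying $G=\imath (G)$), and then to invoke the coincidence $\tau_p=\tau_u$ supplied by Lemma~\ref{l2}.

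First I would unfold the conclusion. For the fixed entourage ${\rm U}\in\mathcal U_X$ put
$$W_{\rm U}(g)=\{h\in G\ |\ (\alpha (g, x), \alpha (h, x))\in {\rm U},\ x\in X\}.$$
By the definition of the uniformity of uniform convergence $\mathcal U_X^u$ and of its restriction $\mathcal U_X^u|_{\imath (G)}$ (which induces $\tau_u$ on $G$), the set $W_{\rm U}(g)$ is a basic neighbourhood of $g$ in the t.u.c. $\tau_u$. Dually, for $\sigma\in\Sigma_X$ and ${\rm V}\in\mathcal U_X$ the set
$$W^{\sigma}_{\rm V}(g)=\{h\in G\ |\ (\alpha (g, x), \alpha (h, x))\in {\rm V},\ x\in\sigma\}$$
is, by the definition of the uniformity of pointwise convergence $\mathcal U_X^p|_{\imath (G)}$, a basic neighbourhood of $g$ in the t.p.c. $\tau_p$, and such sets form a neighbourhood base at $g$ for $\tau_p$. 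With this notation the desired implication is exactly the inclusion $W^{\sigma}_{\rm V}(g)\subset W_{\rm U}(g)$.

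Finally I would apply Lemma~\ref{l2}. The acting group carries the topology $\tau_p$ and $((G, \tau_p), X, \curvearrowright)$ is $G$-Tychonoff, so taking $\tau=\tau_p$ in Lemma~\ref{l2} yields $\tau_p=\tau_u$. Consequently the $\tau_u$-neighbourhood $W_{\rm U}(g)$ is also a $\tau_p$-neighbourhood of $g$, and hence contains some basic $\tau_p$-neighbourhood $W^{\sigma}_{\rm V}(g)$; the corresponding $\sigma\in\Sigma_X$ and ${\rm V}\in\mathcal U_X$ are the ones required. I expect no genuine obstacle here, since the statement is merely a pointwise reformulation of the identity $\tau_p=\tau_u$ of Lemma~\ref{l2}; the only point demanding care is the bookkeeping identifying $G$ with $\imath (G)\subset X^X$ so that $\mathcal U_X^p|_{\imath (G)}$ and $\mathcal U_X^u|_{\imath (G)}$ induce precisely $\tau_p$ and $\tau_u$ on $G$, together with the verification that the sets $W^{\sigma}_{\rm V}(g)$ genuinely run over a neighbourhood base of $g$ for $\tau_p$.
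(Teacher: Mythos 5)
Your proof is correct and follows exactly the route the paper intends: the corollary is stated immediately after Lemma~\ref{l2} precisely because, after identifying $G$ with $\imath(G)\subset X^X$, the asserted implication is the inclusion of a basic t.p.c. neighbourhood of $g$ into a basic t.u.c. neighbourhood of $g$, which is exactly the equality $\tau_p=\tau_u$ furnished by Lemma~\ref{l2} with $\tau=\tau_p$. Your bookkeeping (the balls $W^{\sigma}_{\rm V}(g)$ forming a $\tau_p$-neighbourhood base and $W_{\rm U}(g)$ being a $\tau_u$-neighbourhood) is the right verification and poses no difficulty.
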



\subsection{Ellis equiuniformity on a group from an equiuniformity on a phase space}\label{Elliscmp}

Let $(G=(G, \tau_p), X, \alpha)$ be a $G$-Tychonoff space, $\mathbb{BU}(X)$ is a poset of  totally bounded equiuniformities on $X$. 

For any $\mathcal U_X\in \mathbb{BU}(X)$ by Theorem~\ref{diagactalgcont} (tem III  (a')) $\mathcal U_X^p|_{\imath (G)}\in\mathbb{BU} (G)$ and (item III   (b')) $\mathcal U_{b X}^p|_{\tilde\imath (G)}\in\mathbb{E} (G)$ are defined. Thus, the maps 
$$\mathfrak{E}_{\alpha}(=\mathfrak{E}_{\alpha}(G, X, \alpha)):\mathbb{BU}(X)\to \mathbb{BU} (G),\ \mathfrak{E}_{\alpha}(\mathcal U_X)=\mathcal U_X^p|_{\imath (G)}\ \mbox{and}$$ 
$$\tilde{\mathfrak{E}}_{\alpha}(=\tilde{\mathfrak{E}}_{\alpha}(G, X, \alpha)):\mathbb{BU}(X)\to \mathbb{E} (G),\ \tilde{\mathfrak{E}}_{\alpha}(\mathcal U_X)=\mathcal U_{b X}^p|_{\tilde\imath (G)}$$ are defined. The completion of $(G, \tilde{\mathfrak{E}}_{\alpha}(\mathcal U_X))$ is $(E(G, \mathcal U_X), \widetilde{\tilde{\mathfrak{E}}_{\alpha}(\mathcal U_X)})$, where $E(G, \mathcal U_X)=\cl~\tilde\imath (G)$  is the proper enveloping Ellis semigroup compactification.

\begin{pro}\label{order} 
Let $(G=(G, \tau_p), X, \alpha)$ be a $G$-Tychonoff space. For any $\mathcal U_X\in\mathbb{BU}(X)$ $\mathfrak{E}_{\alpha}(\mathcal U_X)\subset\tilde{\mathfrak{E}}_{\alpha} (\mathcal U_X)$. 

Let $(bX, \tilde{\mathcal U}_X)$ be the completion of $(X, \mathcal U_X)$. $\mathfrak{E}_{\alpha}(\mathcal U_X)=\tilde{\mathfrak{E}}_{\alpha} (\mathcal U_X)$ iff 
$$\begin{array}{l}
\mbox{for any}\ y\in bX\setminus X\ \mbox{and any}\ \tilde{\rm U}\in\tilde{\mathcal U}_X\ \mbox{there exists}\ \sigma\in\Sigma_X\ \mbox{and}\ \tilde{\rm V}\in\tilde{\mathcal U}_X \\
\mbox{such that if}\ (\tilde\alpha (g, x), \tilde\alpha (h, x))\in \tilde{\rm V}, x\in\sigma,\  \mbox{then}\  (\tilde\alpha (g, y), \tilde\alpha (h, y))\in\tilde{\rm U}.
\end{array}\eqno{(\star)}$$

The maps $\mathfrak{E}_{\alpha}:\mathbb{BU}(X)\to \mathbb{BU} (G)$ and $\tilde{\mathfrak{E}}_{\alpha}:\mathbb{BU}(X)\to \mathbb{E} (G)$
preserve partial orders.
\end{pro}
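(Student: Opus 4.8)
The plan is to reduce every assertion to the extremal description furnished by Theorem~\ref{diagactalgcont}, item~IV, which presents $\mathfrak{E}_{\alpha}(\mathcal U_X)=\mathcal U_X^p|_{\imath (G)}$ as the \emph{least} uniformity on $G$ rendering all orbit maps $\alpha_x\colon (G,\cdot)\to (X,\mathcal U_X)$, $x\in X$, uniformly continuous, and $\tilde{\mathfrak{E}}_{\alpha}(\mathcal U_X)=\mathcal U_{bX}^p|_{\tilde\imath (G)}$ as the least uniformity rendering all $\tilde\alpha_y\colon (G,\cdot)\to (bX,\tilde{\mathcal U}_X)$, $y\in bX$, uniformly continuous. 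The first inclusion $\mathfrak{E}_{\alpha}(\mathcal U_X)\subset\tilde{\mathfrak{E}}_{\alpha}(\mathcal U_X)$ is then exactly the statement of item~III\,(c$'$) of that theorem, so I would simply invoke it.

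For the equivalence I would first note that, since $(X,\mathcal U_X)$ is a uniform subspace of $(bX,\tilde{\mathcal U}_X)$ with $X$ invariant and $\tilde\alpha|_{G\times X}=\alpha$, a uniformity on $G$ makes $\tilde\alpha_x$ uniformly continuous (for $x\in X$) precisely when it makes $\alpha_x$ uniformly continuous. Hence the two defining families of orbit maps differ only through the maps $\tilde\alpha_y$ with $y\in bX\setminus X$, and by item~IV the equality $\mathfrak{E}_{\alpha}(\mathcal U_X)=\tilde{\mathfrak{E}}_{\alpha}(\mathcal U_X)$ holds (given the inclusion already established) exactly when each such $\tilde\alpha_y$ is already uniformly continuous relative to $\mathcal U_X^p|_{\imath (G)}$. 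Spelling out this uniform continuity against the basic entourages of $\mathcal U_X^p|_{\imath (G)}$, which are indexed by a finite $\sigma\in\Sigma_X$ and an entourage controlling the coordinates $x\in\sigma$, reproduces condition $(\star)$ verbatim. The only point requiring care is the matching of entourage bases, for which I would use that $\tilde\alpha(g,x)=\alpha(g,x)$ for $x\in X$ and that the traces on $X\times X$ of the members of $\tilde{\mathcal U}_X$ form a base of $\mathcal U_X$.

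For monotonicity I would treat the two maps separately. For $\mathfrak{E}_{\alpha}$: if $\mathcal U_X\subset\mathcal U_X'$ then $\id\colon (X,\mathcal U_X')\to (X,\mathcal U_X)$ is uniformly continuous, so each $\alpha_x\colon (G,\mathfrak{E}_{\alpha}(\mathcal U_X'))\to (X,\mathcal U_X)$ factors through a uniformly continuous map followed by $\id$ and is uniformly continuous; minimality of $\mathfrak{E}_{\alpha}(\mathcal U_X)$ then gives $\mathfrak{E}_{\alpha}(\mathcal U_X)\subset\mathfrak{E}_{\alpha}(\mathcal U_X')$. For $\tilde{\mathfrak{E}}_{\alpha}$ the completions change, so I would pass through the equivariant surjection $p\colon b'X\to bX$ obtained by extending $\id_X$, where $b'X$ is the completion for $\mathcal U_X'$: it is uniformly continuous because $\mathcal U_X\subset\mathcal U_X'$ makes $\id\colon (X,\mathcal U_X')\to (X,\mathcal U_X)$ uniformly continuous, it is surjective by compactness and density of $X$, and it is a $G$-map because it extends the equivariant $\id_X$ and the two continuous composites agree on the dense set $X$. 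Then for $y\in bX$ I would choose $z\in p^{-1}(y)$; equivariance gives $\tilde\alpha_y=p\circ\tilde\alpha'_z$, whence $\tilde\alpha_y$ is uniformly continuous relative to $\tilde{\mathfrak{E}}_{\alpha}(\mathcal U_X')$, and minimality yields $\tilde{\mathfrak{E}}_{\alpha}(\mathcal U_X)\subset\tilde{\mathfrak{E}}_{\alpha}(\mathcal U_X')$.

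I expect the genuine work to sit in the $\tilde{\mathfrak{E}}_{\alpha}$ part of monotonicity, because there the orbit maps have \emph{different} compactifications as targets and the uniformities cannot be compared on a single space; the device of the equivariant uniformly continuous surjection $p$, together with the verification of its surjectivity and equivariance, is precisely what makes the comparison possible. The first inclusion is a citation, while the $(\star)$ equivalence and the $\mathfrak{E}_{\alpha}$ monotonicity are routine once the extremal characterization of Theorem~\ref{diagactalgcont}\,IV is in hand.
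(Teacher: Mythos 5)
Your proposal is correct, and while the first inclusion is the same citation of Theorem~\ref{diagactalgcont}, item III (c$'$), your treatment of the remaining parts follows a genuinely different route from the paper. For the $(\star)$ equivalence the paper is laconic: it identifies the equality $\mathfrak{E}_{\alpha}(\mathcal U_X)=\tilde{\mathfrak{E}}_{\alpha}(\mathcal U_X)$ with the statement that the restriction of the projection $\pr\colon (bX)^{bX}\to (bX)^{X}$ to $\tilde\imath(G)$ is a uniform equivalence, and declares $(\star)$ a reformulation of that fact; your splitting of the defining family of orbit maps into those indexed by $X$ and those indexed by $bX\setminus X$, followed by unpacking uniform continuity of the latter against basic entourages of $\mathcal U_X^p|_{\imath(G)}$, supplies exactly the details that this one-liner suppresses, so here you are more explicit rather than essentially different. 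The genuine divergence is in monotonicity. The paper proves $\mathfrak{E}_{\alpha}(\mathcal U_X)\subset\mathfrak{E}_{\alpha}(\mathcal U'_X)$ by restricting the Cartesian power $\varphi^{X}$ of the map of compactifications $\varphi\colon b'X\to bX$, and proves $\tilde{\mathfrak{E}}_{\alpha}(\mathcal U_X)\subset\tilde{\mathfrak{E}}_{\alpha}(\mathcal U'_X)$ by constructing the map $\psi=\jmath\circ\pr_{X'}\circ\varphi^{b'X}\colon (b'X)^{b'X}\to (bX)^{bX}$ (which requires choosing a subset $X'$ on which $\varphi$ is a bijection and re-indexing coordinates), verifying $\psi\circ\imath'=\imath$ by a pointwise computation, passing to closures to obtain a map of the Ellis compactifications, and finally invoking Proposition~\ref{isomorph}. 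You avoid all of this machinery: for $\mathfrak{E}_{\alpha}$ the uniformly continuous identity $(X,\mathcal U'_X)\to(X,\mathcal U_X)$ plus minimality in item IV suffices, and for $\tilde{\mathfrak{E}}_{\alpha}$ you factor each orbit map as $\tilde\alpha_y=p\circ\tilde\alpha'_z$, $z\in p^{-1}(y)$, through the equivariant uniformly continuous surjection $p\colon b'X\to bX$ extending $\id_X$, and apply item IV twice. This is precisely the mechanism the paper itself uses later to prove the more general Proposition~\ref{connection} (your monotonicity claim is its special case $Y=X$, $\varphi=\id$), so your argument is both shorter and better aligned with the rest of the paper; what the paper's construction buys is an explicit description of the connecting homomorphism between the two enveloping semigroups, but that map is recoverable from the inclusion of uniformities via Proposition~\ref{isomorph}, so nothing essential is lost.
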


\begin{proof} $\mathfrak{E}_{\alpha}(\mathcal U_X)\subset\tilde{\mathfrak{E}}_{\alpha} (\mathcal U_X)$ by item III (c') of Theorem~\ref{diagactalgcont}.

Condition $(\star)$ is a reformulation of the fact that the restriction of projection $\pr: (b X)^{b X}\to (b X)^X$ onto $\tilde\imath (G)$ is a uniform equivalence. 

Let $\mathcal U_X\subset\mathcal U'_X\in \mathbb{BU}(X)$. For the $G$-compactifications $(b X=b_{\mathcal U_X} X, b=b_{\mathcal U_X})$ and $(b' X=b_{\mathcal U'_X} X, b'=b_{\mathcal U'_X})$ of $X$ (completions  of $(X, \mathcal U_X)$ and $(X, \mathcal U'_X)$ respectively) let a $G$-map $\varphi: b' X\to b X$ be such that $\varphi\circ b'=b$ (the map of $G$-compactifications). The restriction $\varphi|_{b'(X)(=X)}: (X, \mathcal U'_X)\to  (X, \mathcal U_X)$ is uniformly continuous.

The Cartesian product $\varphi^{X}:  b' X^X\to  b X^X$ of maps $\varphi$ of compacta is uniformly continuous. Hence, the restriction $\varphi^X|_{\imath'(G)(=G)}: (G, \mathfrak{E}_{\alpha}(U'_X))\to  (G, \mathfrak{E}_{\alpha}(U_X))$  is uniformly continuous and $\mathfrak{E}_{\alpha}(U_X)\subset\mathfrak{E}_{\alpha}(U'_X)$.

Let $\tilde\alpha: G\times b X\to b X$  and $\tilde\alpha': G\times b' X\to b' X$  be the extensions of $\alpha: G\times X\to X$, $X'\subset  b' X$ is any subset the restriction of $\varphi$ to which is a bijection. The Cartesian product $\varphi^{b' X}:   (b' X)^{b' X}\to  (b X)^{b' X}$ of maps $\varphi$ is a $G$-map. The projection $\pr_{X'}: (b X)^{b' X}\to  (b X)^{X'}$ is  a $G$-map, the map $\jmath: (b X)^{X'}\to (b X)^{b X}$,  $\jmath (f)=h$, where $h(t)=f({\varphi^{-1}(t)})$, $t\in b X$ (identification of coordinates) is also a $G$-map. Their composition $\psi=\jmath\circ\pr_{X'}\circ\varphi^{b' X}:  (b' X)^{b' X}\to  (b X)^{b X}$ is a $G$-map.

The following diagram 
$$\begin{array}{ccl}
(b' X)^{b' X} & \stackrel{\psi}{\longrightarrow} &  (b X)^{b X} \\
\imath' \nwarrow &   &  \nearrow \imath \\
 & G & 
\end{array}$$
is commutative.  In fact, for any  $x\in X$ 
$$\imath (g)(b(x))=\tilde\alpha (g, b(x))=\alpha (g, b(x))\ \mbox{and}$$ 
$$[(\psi\circ\imath') (g)] (b(x))=\pr_{b(x)}\big((\psi\circ\imath') (g)\big)=\pr_{b(x)}\big((\jmath\circ\pr_{X'}\circ \varphi^{b' X}\circ \imath')(g)\big)=$$
since $\pr_{b(x)}\circ\jmath\circ\pr_{X'}=\pr_{b'(x)}$
$$=\pr_{b'(x)}\big((\varphi^{b' X}\circ \imath')(g)\big)=\varphi\big(\imath' (g)(b'(x))\big)=\varphi\big(\tilde\alpha'(g, b'(x))\big)=$$
since $\varphi$ is a $G$-map 
$$=\tilde\alpha(g, \varphi (b'(x)))=\tilde\alpha(g, b(x))\stackrel{x\in X}=\alpha(g, b(x)).$$
Thus, $\imath (g)(t)=(\psi\circ\imath') (g)(t)$, $t\in b (X)$.  Being continuous they coincide on $b X$ and  $\imath (g)=(\psi\circ\imath') (g)$.

Continuity of $\psi$,  commutativity of the above diagram and compactness (by Theorem~\ref{diagactalgcont}) of  $\cl~\imath' (G)=E (G, \mathcal U'_X)$, $\cl~\imath (G)=E (G, \mathcal U_X)$ yields that the restriction $\psi|_{E (G, \mathcal U'_X)}: E (G, \mathcal U'_X)\to E (G, \mathcal U_X)$ is the map of $G$-compactifications. From the proof of Proposition~\ref{isomorph} one has $E (G, \mathcal U'_X)\geq E (G, \mathcal U_X)$. It remains to apply Proposition~\ref{isomorph} to obtain $\tilde{\mathfrak{E}}_{\alpha}(\mathcal U_X)\subset\tilde{\mathfrak{E}}_{\alpha}(\mathcal U'_X)$.
\end{proof}

\begin{rem}  {\rm The map $\psi$ of proper Ellis semigroup compactifications in Proposition~\ref{order}  doesn't depend on the choice of $X'$ in Proposition~{\rm\ref{order}}.

In general, the maps $\mathfrak{E}_{\alpha}:\mathbb{BU}(X)\to \mathbb{BU} (G)$ and $\tilde{\mathfrak{E}}_{\alpha}:\mathbb{BU}(X)\to \mathbb{E} (G)$  are neither injective, no surjective.

If $X$ is compact, then $\mathfrak{E}_{\alpha}(\mathcal U_X)=\tilde{\mathfrak{E}}_{\alpha} (\mathcal U_X)$ for the unique uniformity $\mathcal U_X$ on $X$.}
\end{rem}

\begin{pro}~{\rm(\cite[Theorem 2.1]{Sorin1}, \cite[Theorem 1]{Sorin})}\label{a2-2}
Let $(G=(G, \tau_p), X, \alpha)$ be a $G$-Tychonoff space and $\mathcal U_{X}\in\mathbb{BU}(X)$.   

If for any  $\sigma\in\Sigma_{X}$ and for any entourage ${\rm U}\in {\mathcal U}_X$ there is an entourage ${\rm V}={\rm V}(\sigma; {\rm U})\in {\mathcal U}_X$ such that the condition is met:
$$\begin{array}{c}
\mbox{\rm if for}\ g, h\in G\ (\alpha(g, x), \alpha(h, x))\in {\rm V}\ \mbox{\rm holds for}\ x\in\sigma, \\
\mbox{\rm then there exists}\ \ g'\in g\st_{\sigma}\ \ \mbox{\rm such that}\ \ h\in O_{{\rm U}}g',\\ 
\end{array} \eqno{(\star\star)}$$
where $O_{{\rm U}}=\{f\in G\ |\ (\alpha(f, x), x)\in{\rm U},\ x\in X\}$.
Then $\mathfrak{E}_{\alpha} (\mathcal U_X)=R_{\Sigma_{X}}$.
\end{pro}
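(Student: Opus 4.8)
The plan is to prove the equality by splitting it into two inclusions between $\mathfrak{E}_{\alpha}(\mathcal U_X)=\mathcal U_X^p|_{\imath (G)}$ and $R_{\Sigma_X}$. One of them is already available and needs no hypothesis: Proposition~\ref{lemincl} gives $\mathcal U_X^p|_{\imath (G)}\subset R_{\Sigma_X}$ (identifying $G=\imath (G)$). So the entire content of the proposition is the reverse inclusion $R_{\Sigma_X}\subset\mathcal U_X^p|_{\imath (G)}$, and condition $(\star\star)$ should be used exactly there. Since $\mathcal U_X^p|_{\imath (G)}$ is a filter of entourages, it suffices to show that every basic entourage of $R_{\Sigma_X}$ contains a basic entourage of $\mathcal U_X^p|_{\imath (G)}$.

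I would work with the following two bases. On the one hand, $\mathcal U_X^p|_{\imath (G)}$ has the basic entourages ${\rm V}_{\sigma,{\rm U}}=\{(g,h)\ |\ (\alpha(g,x),\alpha(h,x))\in{\rm U},\ x\in\sigma\}$, for $\sigma\in\Sigma_X$ and ${\rm U}\in\mathcal U_X$ (as in the proof of Theorem~\ref{diagactalgcont}, item IV). On the other hand, by \S~\ref{uniftopgr}, $R_{\Sigma_X}$ has the basic entourages ${\rm E}_{O,\sigma}=\bigcup_{g\in G}(Og\st_{\sigma})\times(Og\st_{\sigma})$ arising from the covers $\{Og\st_{\sigma}\ |\ g\in G\}$, $O\in N_G(e)$. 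Thus I must produce, for each fixed $O\in N_G(e)$ and $\sigma\in\Sigma_X$, a pair $(\sigma',{\rm U}')$ with ${\rm V}_{\sigma',{\rm U}'}\subset{\rm E}_{O,\sigma}$; it will turn out that $\sigma'=\sigma$ works.

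The construction is to first replace the arbitrary neighbourhood $O$ by one of the special sets $O_{{\rm U}}=\{f\in G\ |\ (\alpha(f,x),x)\in{\rm U},\ x\in X\}$ appearing in $(\star\star)$. By the remark in \S~\ref{digonal} identifying $\mathcal U_X^u|_{\imath(G)}$ with the right uniformity, the interiors of the sets $O_{{\rm U}}$ form a base of $N_G(e)$, and a short computation using $\alpha(g_1g_2,x)=\alpha(g_1,\alpha(g_2,x))$ shows $O_{{\rm U}_1}\cdot O_{{\rm U}_1}\subset O_{{\rm U}}$ whenever ${\rm U}_1\circ{\rm U}_1\subset{\rm U}$; together these give $O_{{\rm U}_1}\subset\operatorname{int}O_{{\rm U}}$. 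Hence, choosing first ${\rm U}$ with $\operatorname{int}O_{{\rm U}}\subset O$ and then ${\rm U}_1$ with ${\rm U}_1\circ{\rm U}_1\subset{\rm U}$, I obtain $O_{{\rm U}_1}\subset O$. Now I apply $(\star\star)$ with $\sigma$ and ${\rm U}_1$, getting ${\rm V}={\rm V}(\sigma;{\rm U}_1)$, and claim ${\rm V}_{\sigma,{\rm V}}\subset{\rm E}_{O,\sigma}$: if $(g,h)\in{\rm V}_{\sigma,{\rm V}}$, then $(\star\star)$ yields $g'\in g\st_{\sigma}$ with $h\in O_{{\rm U}_1}g'\subset Og\st_{\sigma}$, while $g\in g\st_{\sigma}\subset Og\st_{\sigma}$ (as $e\in O$ and $e\in\st_{\sigma}$); so $g,h$ lie in the single cover member $Og\st_{\sigma}$, i.e. $(g,h)\in{\rm E}_{O,\sigma}$. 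This gives $R_{\Sigma_X}\subset\mathcal U_X^p|_{\imath (G)}$, and combined with Proposition~\ref{lemincl} the equality $\mathfrak{E}_{\alpha}(\mathcal U_X)=R_{\Sigma_X}$ follows.

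I expect the delicate point to be the passage from the given neighbourhood $O$ to a set $O_{{\rm U}_1}$ contained in it. The available statement only says that the \emph{interiors} of the $O_{{\rm U}}$ form a base, so one genuinely needs the semigroup-type inequality $O_{{\rm U}_1}\cdot O_{{\rm U}_1}\subset O_{{\rm U}}$ to upgrade ``$\operatorname{int}O_{{\rm U}}\subset O$'' to ``$O_{{\rm U}_1}\subset O$''; without this one only lands $h$ in $(\operatorname{int}O_{{\rm U}})g'$ against a cover built from the full $O_{{\rm U}}$, which is not quite what is needed. Everything after that is a direct unwinding of $(\star\star)$ against the definition of the $R_{\Sigma_X}$-cover, together with the two immediate observations $e\in O_{{\rm U}}$ (since $\alpha(e,x)=x$) and $g'\in g\st_{\sigma}\Rightarrow O_{{\rm U}_1}g'\subset Og\st_{\sigma}$.
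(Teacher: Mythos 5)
Your proof is correct and follows essentially the same route as the paper's: one inclusion is Lemma~\ref{lemincl}, and the other is exactly the paper's unwinding of $(\star\star)$ into the refinement $\{U_{\sigma; g; {\rm V}}\ |\ g\in G\}\succ\{O_{{\rm U}}g\st_{\sigma}\ |\ g\in G\}$ (the paper phrases this as uniform continuity of $\varphi_{\sigma}^{-1}$ and invokes Corollary~\ref{coinunif1}, which is just the initial-uniformity repackaging of your direct entourage comparison). The only real difference is that you spell out the reduction $O_{{\rm U}_1}\subset O$ via the inequality $O_{{\rm U}_1}\cdot O_{{\rm U}_1}\subset O_{{\rm U}}$, a step the paper compresses into the phrase ``taking in account item (C) of \S~\ref{digonal}''.
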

 
\begin{proof}
By Corollary~\ref{coinunif1} it is sufficient to show that for any $\sigma\in\Sigma_{X}$  the map  $\varphi_{\sigma}: (G/\st_{\sigma}, \mathcal U_{G/\st_{\sigma}})\to ((\pr_{\sigma}\circ\imath) (G), {\mathcal U}_X^{\sigma}|_{(\pr_{\sigma}\circ\imath) (G)})$ is a uniform equivalence.
Since  $\mathfrak{E}_{\alpha} (\mathcal U_X)\subset R_{\Sigma_{X}}$ by Lemma~\ref{lemincl}, only the uniform continuity of $\varphi_{\sigma}^{-1}$ must be checked, i.e. 
in the cover $$\{O_{{\rm U}}g\st_{\sigma}\ |\ g\in G\},\ {\rm U}\in {\mathcal U}_X,$$
the cover 
$$\{U_{\sigma; g; {\rm V}}=\{h\in G\ |\ (\alpha(g, x), \alpha(h, x))\in {\rm V},\ x\in\sigma\}\ |\ g\in G\},$$
is refined for some ${\rm V}\in{\mathcal U}_X$. 
 
By the condition $(\star\star)$ for $\sigma\in\Sigma_{X}$ and ${\rm U}\in {\mathcal U}_X$ there exists ${\rm V}\in {\mathcal U}_X$ such that if for $g, h\in G$ $(\alpha(g, x), \alpha(h, x))\in {\rm V}$, $x\in\sigma$, then there exists $g'\in g\st_{\sigma}$ such that $h\in O_{{\rm U}}g'$. Then for any fixed $g\in G$ and any $h\in U_{\sigma; g; {\rm V}}$ 
$$h\in O_{{\rm U}} g\st_{\sigma}.$$
Therefore, $U_{\sigma; g; {\rm V}}\subset   O_{{\rm U}}g\st_{\sigma}$, i.e. the cover   
$\{U_{\sigma; g; {\rm V}}\ |\ g\in G\}\succ\{O_{{\rm U}}g\st_{\sigma}\ |\ g\in G\}$ and, hence, taking in account item (C) of \S~\ref{digonal}, $\varphi_{\sigma}$ is a uniform equivalence.
\end{proof}

\begin{rem}{\rm The condition $h\in O_{{\rm U}}g$ in Proposition~\ref{a2-2} is equivalent to the condition $(\alpha(h, x), \alpha(g, x))\in {\rm U}$, $x\in X$. 

Indeed, $(\alpha(h, x), \alpha(g, x))\in {\rm U}, x\in X\Longleftrightarrow (\alpha(hg^{-1}, x), x)\in {\rm U}, x\in X \Longleftrightarrow hg^{-1}\in O_{{\rm U}} \Longleftrightarrow  h\in O_{{\rm U}}g$.}
\end{rem}

\begin{cor}\label{cora2-2}
Let $(G=(G, \tau_p), X, \alpha)$ be a $G$-Tychonoff space, $\mathcal U_{X}\in\mathbb{BU}(X)$ and  $(b X, \tilde{\mathcal U}_X)$ is the completion of  $(X, \mathcal U_X)$ {\rm(}$G$-compactification of $X${\rm)}. Then  
\begin{itemize}
\item[{\rm (1)}] $$\begin{array}{ccl}
\mathfrak{E}_{\alpha} (\mathcal U_X) & \subset &  R_{\Sigma_{X}} \\
\cap &   &  \cap \\
 \tilde{\mathfrak{E}}_{\alpha} (\mathcal U_X) & \subset &  R_{\Sigma_{b X}}.
\end{array}$$
\item[{\rm (2)}] $\mathfrak{E}_{\alpha} (\mathcal U_X)=R_{\Sigma_{X}}$ {\rm(}respectively, $\tilde{\mathfrak{E}}_{\alpha} (\mathcal U_X)=R_{\Sigma_{b X}}${\rm)}  if the correspondent condition  $(\star\star)$ is valid. 
\item[{\rm (3)}]  If the condition  $(\star\star)$ is valid for $(G, (b X, \tilde{\mathcal U}_X), \tilde\alpha)$, then  the condition  $(\star\star)$ is valid for $(G, (X, \mathcal U_X), \alpha)$.
\end{itemize}
\end{cor}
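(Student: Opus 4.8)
The plan is to assemble parts (1) and (2) directly from the machinery already in place and to reserve the genuine argument for part (3). First I record the identification that makes the ``tilde'' statements instances of the untilded ones: since $bX$ is compact it carries a unique uniformity, namely $\tilde{\mathcal U}_X$, so $\tilde{\mathfrak E}_\alpha(\mathcal U_X)=\mathcal U_{bX}^{p}|_{\tilde\imath(G)}=\mathfrak E_{\tilde\alpha}(\tilde{\mathcal U}_X)$ for the extended $G$-Tychonoff space $(G,(bX,\tilde{\mathcal U}_X),\tilde\alpha)$. With this in hand, three of the four inclusions in (1) are immediate: $\mathfrak E_\alpha(\mathcal U_X)\subset R_{\Sigma_X}$ is Proposition~\ref{lemincl}; $\tilde{\mathfrak E}_\alpha(\mathcal U_X)\subset R_{\Sigma_{bX}}$ is the same proposition applied to $(G,(bX,\tilde{\mathcal U}_X),\tilde\alpha)$; and the left vertical inclusion $\mathfrak E_\alpha(\mathcal U_X)\subset\tilde{\mathfrak E}_\alpha(\mathcal U_X)$ is the opening assertion of Proposition~\ref{order}. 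For the right vertical inclusion $R_{\Sigma_X}\subset R_{\Sigma_{bX}}$ I observe that $\Sigma_X\subset\Sigma_{bX}$, so the defining base covers $\{Og\st_\sigma\mid g\in G\}$, $O\in N_G(e)$, $\sigma\in\Sigma_X$, of $R_{\Sigma_X}$ form a subfamily of those defining $R_{\Sigma_{bX}}$, whence $R_{\Sigma_X}\subset R_{\Sigma_{bX}}$. Part (2) is then Proposition~\ref{a2-2} read off for each action: condition $(\star\star)$ for $(X,\mathcal U_X)$ gives $\mathfrak E_\alpha(\mathcal U_X)=R_{\Sigma_X}$, while condition $(\star\star)$ for $(bX,\tilde{\mathcal U}_X)$ gives $\mathfrak E_{\tilde\alpha}(\tilde{\mathcal U}_X)=R_{\Sigma_{bX}}$, i.e.\ $\tilde{\mathfrak E}_\alpha(\mathcal U_X)=R_{\Sigma_{bX}}$.

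The substance is part (3), where I transfer the validity of $(\star\star)$ from $bX$ down to $X$ by comparing displacement sets. Fix $\sigma\in\Sigma_X$ and $\mathrm U\in\mathcal U_X$. Since $\mathcal U_X$ is the trace of $\tilde{\mathcal U}_X$ on the dense uniform subspace $X\subset bX$, I choose $\tilde{\mathrm U}\in\tilde{\mathcal U}_X$ with $\tilde{\mathrm U}\cap(X\times X)=\mathrm U$. The key comparison is
$$\tilde O_{\tilde{\mathrm U}}=\{f\in G\mid(\tilde\alpha(f,y),y)\in\tilde{\mathrm U},\ y\in bX\}\subset O_{\mathrm U},$$
which holds because restricting the defining condition from all $y\in bX$ to all $x\in X$ and using $\tilde\alpha(f,x)=\alpha(f,x)$ gives $(\alpha(f,x),x)\in\tilde{\mathrm U}\cap(X\times X)=\mathrm U$. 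Now I apply the hypothesis $(\star\star)$ for $(bX,\tilde{\mathcal U}_X)$ to the same $\sigma$ (legitimate since $\Sigma_X\subset\Sigma_{bX}$) and to $\tilde{\mathrm U}$, obtaining $\tilde{\mathrm V}\in\tilde{\mathcal U}_X$, and set $\mathrm V=\tilde{\mathrm V}\cap(X\times X)\in\mathcal U_X$. If $(\alpha(g,x),\alpha(h,x))\in\mathrm V$ for $x\in\sigma$, then, as $\sigma\subset X$, this reads $(\tilde\alpha(g,x),\tilde\alpha(h,x))\in\tilde{\mathrm V}$ for $x\in\sigma$; by hypothesis there is $g'\in g\st_\sigma$ with $h\in\tilde O_{\tilde{\mathrm U}}g'\subset O_{\mathrm U}g'$. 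Here the stabilizer of $\sigma$ computed for $\tilde\alpha$ coincides with the one for $\alpha$ because $\sigma\subset X$ and $\tilde\alpha$ extends $\alpha$. This is exactly $(\star\star)$ for $(X,\mathcal U_X)$.

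The only delicate point I anticipate is the inclusion $\tilde O_{\tilde{\mathrm U}}\subset O_{\mathrm U}$, together with the care needed to use it in the correct direction: the argument succeeds precisely because constraining displacement over the larger space $bX$ is stronger than over $X$, so I never need the reverse inclusion and hence no density-and-closed-entourage argument is required. Everything else---the containment $\Sigma_X\subset\Sigma_{bX}$ of finite sets and the resulting agreement of the stabilizers $\st_\sigma$, and the identification $\tilde{\mathfrak E}_\alpha(\mathcal U_X)=\mathfrak E_{\tilde\alpha}(\tilde{\mathcal U}_X)$---is the bookkeeping that funnels the compact-phase-space instance of $(\star\star)$ into its dense-subspace instance.
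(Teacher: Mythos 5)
Your proposal is correct and follows exactly the route the paper intends for this corollary (which it states without proof as an immediate consequence): parts (1) and (2) are assembled from Proposition~\ref{lemincl}, Proposition~\ref{order}, and Proposition~\ref{a2-2} applied to both $(X,\mathcal U_X,\alpha)$ and $(bX,\tilde{\mathcal U}_X,\tilde\alpha)$, together with the observation that the base covers of $R_{\Sigma_X}$ form a subfamily of those of $R_{\Sigma_{bX}}$. Your part (3) — restricting entourages to $X\times X$, noting $\st_\sigma$ agrees for $\alpha$ and $\tilde\alpha$ when $\sigma\subset X$, and using $\tilde O_{\tilde{\rm U}}\subset O_{\rm U}$ — is precisely the routine transfer argument the paper leaves implicit, carried out correctly and in the right direction.
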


\begin{rem}
{\rm From Corollary~\ref{coinunif1} it also follows that if the condition  $(\star\star)$ holds for $\mathcal U_{X}\in\mathbb{BU}_X$, then it holds for the maximal equiuniformity on $X$.}
\end{rem}

\begin{rem} {\rm (1) The above approach of obtaining compactifications of transformation groups is used in~\cite{Sorin} for the group of homeomorphisms of the compactum $K$ in the topology of pointwise convergence. 

(2) Fulfillment of the condition $(\star\star)$ in Proposition~\ref{a2-2} yields Roelcke precompactness of an acting group (in the topology of pointwise convergence) by Remark~\ref{a2-1}. 

Other results that guarantee the Roelcke precompactness of an acting group are the following. 

(a) If a (closed) subgroup $H$ of a topological group $G$ is Roelcke precompact and the maximal equiuniformity on the coset space $G/H$ (with the action of $G$ by multiplication on the left) is totally bounded, then $G$ is Roelcke precompact~\cite[Proposition 6.4]{Megr} ($G$ can be considered as a transformation group of $G/H$).

(b) If a topological group $G$ has a directed family  $\mathcal K$ of small subgroups such that the maximal equiuniformity on a coset space $G/H$ (with the action of $G$ by multiplication on the left) is totally bounded, $H\in\mathcal K$, then  $G$ is Roelcke precompact~\cite[Corollary 4.5]{Kozlov}.

(c) If the action  $G\curvearrowright X$, where $X$ is a discrete space, is oligomorphic, then $(G, \tau_{\partial})$ is Roelcke precompact~\cite {Tsan} (equivalently, for the action of $G$ in the permutation topology (which coincides with the topology of pointwise convergence) the maximal equiuniformity on $X$ is totally bounded~\cite[Theorem 3.3]{Sorin1}). }
\end{rem}

\begin{pro}\label{connection}
Let $(G=(G, \tau_p), X, \alpha)$ and $(G=(G, \tau_p), Y, \alpha')$ be $G$-Tychonoff spaces, $\mathcal U_{X}\in\mathbb{BU}(X)$ and  $(b X, \tilde{\mathcal U}_X)$ is the completion of  $(X, \mathcal U_X)$ {\rm(}$G$-compactification of $X${\rm)}, $\mathcal U_{Y}\in\mathbb{BU}(Y)$ and  $(b Y, \tilde{\mathcal U}_Y)$ is the completion of  $(Y, \mathcal U_Y)$ {\rm(}$G$-compactification of $Y${\rm)}, $\varphi: (X, \mathcal U_{X})\to  (Y, \mathcal U_{Y})$ is a uniformly continuous $G$-map, $\varphi (X)$ is dense in $Y$. Then  
$$\tilde{\mathfrak{E}}_{\alpha'} (\mathcal U_Y)\subset\tilde{\mathfrak{E}}_{\alpha} (\mathcal U_X),\ R_{\Sigma_{b Y}}\subset R_{\Sigma_{b X}}.$$
If $\varphi$ is a surjection, then 
$$\mathfrak{E}_{\alpha'} (\mathcal U_Y)\subset\mathfrak{E}_{\alpha} (\mathcal U_X),\ R_{\Sigma_{Y}}\subset R_{\Sigma_{X}}.$$
\end{pro}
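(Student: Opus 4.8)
The plan is to exploit the functoriality of the Ellis construction by realizing the uniformly continuous $G$-map $\varphi\colon X\to Y$ as inducing, via composition with orbit maps, a uniformly continuous comparison between the two Ellis equiuniformities on $G$. The key observation is that both $\tilde{\mathfrak{E}}_{\alpha}$ and $\tilde{\mathfrak{E}}_{\alpha'}$ are, by Theorem~\ref{diagactalgcont} (item IV), initial uniformities with respect to the families of extended orbit maps $\{\tilde\alpha_y\colon G\to bX\}_{y\in bX}$ and $\{\tilde\alpha'_z\colon G\to bY\}_{z\in bY}$ respectively. So the entire statement reduces to showing that each orbit map for the $Y$-side factors (uniformly) through the $X$-side data.

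First I would extend $\varphi$ to a $G$-map $\tilde\varphi\colon bX\to bY$ of the $G$-compactifications. Since $\varphi\colon (X,\mathcal U_X)\to(Y,\mathcal U_Y)$ is uniformly continuous and $\varphi(X)$ is dense in $Y$, the completions satisfy that $\tilde\varphi$ is a uniformly continuous $G$-map of compacta with $\tilde\varphi\circ b_{\mathcal U_X}=b_{\mathcal U_Y}\circ\varphi$. Equivariance of $\varphi$ passes to $\tilde\varphi$ by density and continuity, exactly as in the verification of commutativity in the proof of Proposition~\ref{order}. Then for each $z\in bY$ I would use equivariance to rewrite the orbit map: picking any $y\in bX$ with $\tilde\varphi(y)=z$ (such $y$ exists on the dense image, and for the general $z$ one argues through density), one has $\tilde\alpha'_z(g)=\tilde\alpha'(g,\tilde\varphi(y))=\tilde\varphi(\tilde\alpha(g,y))=\tilde\varphi\circ\tilde\alpha_y(g)$, so $\tilde\alpha'_z=\tilde\varphi\circ\tilde\alpha_y$. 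Because $\tilde\varphi$ is uniformly continuous, this exhibits each generating orbit map of the initial uniformity $\tilde{\mathfrak{E}}_{\alpha'}(\mathcal U_Y)$ as a uniformly continuous map out of $(G,\tilde{\mathfrak{E}}_{\alpha}(\mathcal U_X))$; by the universal property of the initial uniformity this yields $\tilde{\mathfrak{E}}_{\alpha'}(\mathcal U_Y)\subset\tilde{\mathfrak{E}}_{\alpha}(\mathcal U_X)$.

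For the inclusion $R_{\Sigma_{bY}}\subset R_{\Sigma_{bX}}$ I would argue via the stabilizers: equivariance of $\tilde\varphi$ gives $\st_y\subset\st_{\tilde\varphi(y)}$ for every $y\in bX$, hence for every finite $\sigma\subset bY$, choosing a finite preimage set $\tau\subset bX$ with $\tilde\varphi(\tau)=\sigma$, one gets $\st_\tau\subset\st_\sigma$. Recalling from \S~\ref{uniftopgr} that $R_{\Sigma}$ is initial with respect to the quotient maps $q_\sigma\colon G\to G/\st_\sigma$ carrying the maximal (open-action) equiuniformities, this inclusion of small subgroups translates into the coarser uniformity $R_{\Sigma_{bY}}$ being dominated by $R_{\Sigma_{bX}}$. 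When $\varphi$ is surjective, $\tilde\varphi$ restricts to a surjection (indeed a uniformly continuous surjective $G$-map) $X\to Y$ of the uncompleted phase spaces, and repeating the orbit-map factorization with $\varphi$ in place of $\tilde\varphi$ and $\mathcal U_X^p|_{\imath(G)}$, $\mathcal U_Y^p|_{\imath(G)}$ in place of the Ellis versions gives $\mathfrak{E}_{\alpha'}(\mathcal U_Y)\subset\mathfrak{E}_{\alpha}(\mathcal U_X)$ and $R_{\Sigma_Y}\subset R_{\Sigma_X}$ in the same way.

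The main obstacle I expect is the handling of the points $z\in bY$ that are \emph{not} in $\tilde\varphi(bX)$, or equivalently the points $y\in bX$ for which the preimage construction is delicate: the factorization $\tilde\alpha'_z=\tilde\varphi\circ\tilde\alpha_y$ is transparent only when $z$ lies in the image of $\tilde\varphi$. Since $bX$ is compact, $\tilde\varphi(bX)$ is closed and contains the dense set $b_{\mathcal U_Y}(\varphi(X))$, so in fact $\tilde\varphi$ is surjective onto $bY$; establishing this surjectivity (compactness of $bX$ plus density of $\varphi(X)$ in $Y$, hence of $b_{\mathcal U_Y}(\varphi(X))$ in $bY$) is the technical crux that makes every $z$ reachable and legitimizes the uniform comparison for the whole generating family. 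Once surjectivity of $\tilde\varphi$ is in hand, the remaining steps are routine applications of the universal property of initial uniformities and the extremal characterization in Theorem~\ref{diagactalgcont} (IV).
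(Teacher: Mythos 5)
Your proposal is correct and follows essentially the same route as the paper: extend $\varphi$ to a surjective $G$-map $\tilde\varphi\colon bX\to bY$ of the compactifications, factor each orbit map as $\tilde\alpha'_z=\tilde\varphi\circ\tilde\alpha_y$, apply the extremal/initial characterization of Theorem~\ref{diagactalgcont} item IV twice, and deduce the $R_\Sigma$-inclusion from the stabilizer inclusion $\st_y\subset\st_{\tilde\varphi(y)}$ given by equivariance. The only difference is cosmetic: you spell out the surjectivity of $\tilde\varphi$ (compact image containing a dense set) and the finite-preimage step for stabilizers, which the paper leaves implicit.
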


\begin{proof}  Let $\tilde\varphi: bX\to bY$ be the extension (surjection) of $\varphi$ (a uniformly continuous surjective $G$-map). For any $y\in b Y$ and $\tilde\varphi (x)=y$ the following is valid $\tilde{\alpha}'_y=\tilde\varphi\circ\tilde\alpha_x$. 

By Theorem~\ref{diagactalgcont} item IV $\tilde{\mathfrak{E}}_{\alpha} (\mathcal U_X)\in\mathbb{E}(G)$ us the least equiuniformity $\mathcal U$ on $G$ such that the orbit maps $\tilde\alpha_x: (G, \mathcal U)\to (b X, \tilde{\mathcal U}_X)$, $x\in b X$, are uniformly continuous. Hence, the orbit maps $\tilde{\alpha}'_y: (G, \tilde{\mathfrak{E}}_{\alpha} (\mathcal U_X))\to (b Y, \tilde{\mathcal U}_Y)$, $y\in b Y$, are uniformly continuous. Again applying Theorem~\ref{diagactalgcont} item IV, one obtains $\tilde{\mathfrak{E}}_{\alpha'} (\mathcal U_Y)\subset\tilde{\mathfrak{E}}_{\alpha} (\mathcal U_X)$. 

Since $\tilde\varphi$ is a $G$-map $\st_x\subset\st_{\varphi (x)}$ (if $\alpha (g, x)=x$, then $\alpha'(g, \varphi (x))=\varphi(\alpha (g, x))=\varphi (x)$). The usage of definitions of $R_{\Sigma_{b Y}}$ and $R_{\Sigma_{b X}}$  finishes the proof of the inclusion $R_{\Sigma_{b Y}}\subset R_{\Sigma_{b X}}$.

The proof of the second statement is analogous.
\end{proof}


\subsection{The maps $\mathfrak{E}_{*}$ and  $\tilde{\mathfrak{E}}_{*}$}\label{descrcomp}

For a $G$-Tychonoff space $(G, G, \cdot)$ the t.p.c. $\tau_p$ coincides with the topology of $G$ by Lemma~\ref{cointpc} and, hence,  from the considerations in \S~\ref{Elliscmp} the maps  $\mathfrak{E}_{*}:\mathbb{BU}(G)\to \mathbb{BU} (G)$, $\tilde{\mathfrak{E}}_{*}:\mathbb{BU}(G)\to \mathbb{E} (G)$ are correctly defined.

\begin{thm}\label{mathfrak} For a topological group $G$ and $\mathcal U, \mathcal U'\in \mathbb{BU}(G)$  the following hold.
\begin{itemize}
\item[{\rm (a)}]  $\mathcal U\subset \mathfrak{E}_{*}(\mathcal U)\subset\tilde{\mathfrak{E}}_{*}(\mathcal U)$.
\item[{\rm (b)}]  $\mathcal U=\mathfrak{E}_{*}(\mathcal U)$ iff for any $x\in G$ the orbit map $\cdot_x: (G, \mathcal U)\to  (G, \mathcal U)$, $\cdot_x (g)=gx$, is uniformly continuous  {\rm(}in particular,  $\mathfrak{E}_{*}(\mathfrak{E}_{*}(\mathcal U))=\mathfrak{E}_{*}(\mathcal U)${\rm)}.
\item[{\rm (c)}] If $\mathcal U\subset\mathcal U'$, then $\mathfrak{E}_{*}(\mathcal U)\subset\mathfrak{E}_{*}(\mathcal U')$ {\rm(}in particular, if $\mathcal U\subset\mathcal U'\subset\mathfrak{E}_{*}(\mathcal U)$, then $\mathfrak{E}_{*}(\mathcal U)=\mathfrak{E}_{*}(\mathcal U')${\rm)}.
\item[{\rm (d)}] If  $\mathcal U\in\mathbb{E} (G)\subset\mathbb{BU}(G)$, then  $\tilde{\mathfrak{E}}_{*}(\mathcal U)=\mathfrak{E}_{*}(\mathcal U)=\mathcal U$ {\rm(}in particular, $\tilde{\mathfrak{E}}_{*}:\mathbb{BU}(G)\to \mathbb{E} (G)$ is a surjection, $\tilde{\mathfrak{E}}_{*}(\mathcal U)=\tilde{\mathfrak{E}}_{*}(\tilde{\mathfrak{E}}_{*}(\mathcal U))$ and $\mathcal U\in\mathbb{E} (G)$ iff  $\tilde{\mathfrak{E}}_{*}(\mathcal U)=\mathcal U${\rm)}.
\item[{\rm (e)}] $\mathcal U\in \mathbb{E} (G)$ iff for any $x\in bG$ {\rm(}$(b G, \tilde{\mathcal U})$ is the completion of $(G, \mathcal U)${\rm)} the orbit map $\tilde\cdot_x:  (G, \mathcal U)\to (b G, \tilde{\mathcal U})$, $\tilde\cdot_x (g)=\tilde\cdot (g, x)$,  is uniformly continuous. 
\item[{\rm (f)}] If $\mathcal U\subset\mathcal U'$, then $\tilde{\mathfrak{E}}_{*}(\mathcal U)\subset\tilde{\mathfrak{E}}_{*}(\mathcal U')$  {\rm(}in particular, if $\mathcal U\subset\mathcal U'\subset\tilde{\mathfrak{E}}_{*}(\mathcal U)$, then $\tilde{\mathfrak{E}}_{*}(\mathcal U)=\tilde{\mathfrak{E}}_{*}(\mathcal U')$ and $\tilde{\mathfrak{E}}_{*}(\mathcal U)=\tilde{\mathfrak{E}}_{*}({\mathfrak{E}}_{*}(\mathcal U))${\rm)}.
\end{itemize}
\end{thm}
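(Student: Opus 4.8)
The plan is to derive all six items from three facts established earlier: the extremal (initial-uniformity) description of $\mathfrak{E}_*$ and $\tilde{\mathfrak{E}}_*$ in Theorem~\ref{diagactalgcont}~IV, the monotonicity of both maps together with $\mathfrak{E}_*(\mathcal U)\subset\tilde{\mathfrak{E}}_*(\mathcal U)$ from Proposition~\ref{order}, and the fact that $\tilde{\mathfrak{E}}_*(\mathcal U)\in\mathbb{E}(G)$ for \emph{every} $\mathcal U\in\mathbb{BU}(G)$ (Theorem~\ref{diagactalgcont}~III~(b'), specialized to $(G,G,\cdot)$). With these in hand, everything except the forward implication of~(e) is soft manipulation of initial uniformities inside the poset.

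For (a) the inclusion $\mathfrak{E}_*(\mathcal U)\subset\tilde{\mathfrak{E}}_*(\mathcal U)$ is immediate from Proposition~\ref{order}; for $\mathcal U\subset\mathfrak{E}_*(\mathcal U)$ I would note that among the orbit maps defining $\mathfrak{E}_*(\mathcal U)$ is $\cdot_e=\id$, so uniform continuity of $\id\colon(G,\mathfrak{E}_*(\mathcal U))\to(G,\mathcal U)$ forces $\mathcal U\subset\mathfrak{E}_*(\mathcal U)$. For (b) the equivalence is exactly the extremal property: if $\mathcal U=\mathfrak{E}_*(\mathcal U)$ the maps $\cdot_x\colon(G,\mathcal U)\to(G,\mathcal U)$ are uniformly continuous by Theorem~\ref{diagactalgcont}~IV, and conversely their uniform continuity gives $\mathfrak{E}_*(\mathcal U)\subset\mathcal U$, which with~(a) yields equality. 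Idempotency $\mathfrak{E}_*(\mathfrak{E}_*(\mathcal U))=\mathfrak{E}_*(\mathcal U)$ I would get from the criterion in~(b): setting $\mathcal U'=\mathfrak{E}_*(\mathcal U)$, each composite $\cdot_y\circ\cdot_x=\cdot_{xy}\colon(G,\mathcal U')\to(G,\mathcal U)$ is uniformly continuous, so by initiality $\cdot_x\colon(G,\mathcal U')\to(G,\mathcal U')$ is uniformly continuous. Part~(c) is the monotonicity of $\mathfrak{E}_*$ from Proposition~\ref{order}, and its ``in particular'' follows by applying monotonicity to $\mathcal U\subset\mathcal U'\subset\mathfrak{E}_*(\mathcal U)$ and using idempotency.

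The conceptual core is~(e), equivalently the characterization $\mathcal U\in\mathbb{E}(G)\Leftrightarrow\tilde{\mathfrak{E}}_*(\mathcal U)=\mathcal U$. Since $\tilde{\mathfrak{E}}_*(\mathcal U)$ is the least uniformity making every orbit map $\tilde\cdot_x\colon(G,\mathcal U)\to(bG,\tilde{\mathcal U})$, $x\in bG$, uniformly continuous, and $\mathcal U\subset\tilde{\mathfrak{E}}_*(\mathcal U)$ by~(a), equality $\tilde{\mathfrak{E}}_*(\mathcal U)=\mathcal U$ holds precisely when $\mathcal U$ itself renders all these orbit maps uniformly continuous, which is the right-hand side of~(e). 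The reverse implication of~(e) is then painless: uniform continuity of the orbit maps gives $\tilde{\mathfrak{E}}_*(\mathcal U)\subset\mathcal U$, hence $\mathcal U=\tilde{\mathfrak{E}}_*(\mathcal U)\in\mathbb{E}(G)$ because $\tilde{\mathfrak{E}}_*(\mathcal U)$ is always an Ellis equiuniformity. The forward implication is the one genuine step. Assuming $\mathcal U\in\mathbb{E}(G)$, the completion $S=bG$ is a compact right topological semigroup with $s\colon G\to S$ a topological isomorphism onto a dense subgroup and extended action $\tilde\cdot(g,x)=s(g)\bullet x$; I would then factor the orbit map as $\tilde\cdot_x=\rho_x\circ s$, where $\rho_x\colon y\mapsto y\bullet x$ is the right translation. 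Right-topologicity makes $\rho_x$ continuous on the compactum $bG$, hence uniformly continuous for the unique uniformity $\tilde{\mathcal U}$, while $s$ is a uniform embedding; so $\tilde\cdot_x$ is uniformly continuous. This factorization through a continuous right translation, upgraded to uniform continuity by compactness, is the step I expect to carry the real content.

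Finally, (d) follows from~(e): $\mathcal U\in\mathbb{E}(G)$ gives uniform continuity of the orbit maps, whence $\tilde{\mathfrak{E}}_*(\mathcal U)\subset\mathcal U$, and with~(a) we obtain $\mathcal U=\mathfrak{E}_*(\mathcal U)=\tilde{\mathfrak{E}}_*(\mathcal U)$; surjectivity of $\tilde{\mathfrak{E}}_*$ onto $\mathbb{E}(G)$, its idempotency, and the equivalence $\mathcal U\in\mathbb{E}(G)\Leftrightarrow\tilde{\mathfrak{E}}_*(\mathcal U)=\mathcal U$ are then immediate, using $\tilde{\mathfrak{E}}_*(\mathcal U)\in\mathbb{E}(G)$. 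Part~(f) is the monotonicity of $\tilde{\mathfrak{E}}_*$ from Proposition~\ref{order}; its ``in particular'' statements follow by squeezing $\mathcal U\subset\mathcal U'\subset\tilde{\mathfrak{E}}_*(\mathcal U)$ (respectively $\mathcal U\subset\mathfrak{E}_*(\mathcal U)\subset\tilde{\mathfrak{E}}_*(\mathcal U)$ from~(a)) between the monotone image and the idempotent value $\tilde{\mathfrak{E}}_*(\tilde{\mathfrak{E}}_*(\mathcal U))=\tilde{\mathfrak{E}}_*(\mathcal U)$.
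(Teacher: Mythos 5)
Your proposal is correct and takes essentially the same approach as the paper: it rests on the same three ingredients (the initiality/extremal property of Theorem~\ref{diagactalgcont}~IV, the monotonicity and comparison statements of Proposition~\ref{order}, and the fact $\tilde{\mathfrak{E}}_{*}(\mathcal U)\in\mathbb{E}(G)$ from Theorem~\ref{diagactalgcont}~III~(b')), and its one substantive step is exactly the paper's: factoring the orbit maps $g\mapsto b(g)\bullet t$ through the right translations of the compact right topological semigroup $bG$, which are uniformly continuous by compactness, so that $\tilde{\mathfrak{E}}_{*}(\mathcal U)\subset\mathcal U$. The only difference is organizational — the paper runs this argument to prove (d) directly and then deduces (e), while you prove the forward implication of (e) and deduce (d) — and the details you supply where the paper is terse (e.g.\ idempotency of $\mathfrak{E}_{*}$ via $\cdot_y\circ\cdot_x=\cdot_{xy}$ and initiality) are sound.
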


\begin{proof}
(a) The projection $\pr_{e}: (G^{G}, \mathcal U^p)\to (G=\{e\}\times G, \mathcal U)$ is uniformly continuous. Therefore,  $\pr_{e}|_{\imath (G)}: (\imath (G),  \mathfrak{E}_{*}(\mathcal U))\to (G, \mathcal U)$ is uniformly continuous. Hence,  $\mathcal U\subset \mathfrak{E}_{*}(\mathcal U)$. The inclusion $\mathfrak{E}_{*}(\mathcal U)\subset\tilde{\mathfrak{E}}_{*}(\mathcal U)$ is proved in Proposition~\ref{order}.

(b) follows from the inclusion  $\mathcal U\subset \mathfrak{E}_{*}(\mathcal U)$ in (a) and Theorem~\ref{diagactalgcont} item IV.

(c) The preservation of order follows from Proposition~\ref{order}. The rest follows from the inclusion  $\mathcal U\subset \mathfrak{E}_{*}(\mathcal U)$ and Theorem~\ref{diagactalgcont} item IV.

(d) By (a)  $\mathcal U\subset\tilde{\mathfrak{E}}_{*}(\mathcal U)$. 

Let $(b G, b)$ be the $G$-compactification of $G$ ($(b G, \tilde{\mathcal U})$ is the completion of $(G, \mathcal U)$), $\tilde\cdot: G\times bG\to bG$ is the extension of action $\cdot$, $\tilde\imath: G\to (bG)^{bG}$. 

Since $(bG, \bullet)$ is a right topological semigroup and the restriction of multiplication on $b (G)\times bG$ is continuous:
$$\imath (g)(t)=\tilde\cdot (g, t)=b (g)\bullet t,\ t\in bG,\ \mbox{and}$$
the continuous multiplication on the right is uniformly continuous. 
Therefore, for any $t\in bG$ and   for any ${\rm U}\in \mathcal U$ there exists ${\rm V}\in\mathcal U$ such that 
$$\mbox{if for}\ f, g\in G,\ (f, g)\in {\rm V},\ \mbox{then}\ (b (f)\bullet t, b (g)\bullet t)\in {\rm U}.$$
Therefore, for any entourage $\hat {\rm U}=\{ (f, g)\in G\times G\ |\ (\imath (f)(t)=b (f)\bullet t, \imath (g)(t)=b (g)\bullet t)\in {\rm U}\}$, where $t\in bG$, ${\rm U}\in\mathcal U$, from the subbase of $\tilde{\mathfrak{E}}_{*}(\mathcal U)$ there exists  ${\rm V}\in\mathcal U$ such that 
$$\mbox{if}\ (f, g)\in {\rm V},\ \mbox{then}\  (f, g)\in \hat {\rm U}$$
and the map $\tilde\imath: G\to\tilde\imath (G)$ is uniformly continuous. Hence,  $\tilde{\mathfrak{E}}_{*}(\mathcal U)\subset\mathcal U$ and, finally $\mathcal  U=\tilde{\mathfrak{E}}_{*}(\mathcal U)$. 

The inclusion $\mathbb{E} (G)\subset\mathbb{BU}(G)$ yields that $\tilde{\mathfrak{E}}_{*}$ is surjective.  Idempotentness of $\tilde{\mathfrak{E}}_{*}$ is evident.

\medskip

(e) $\tilde{\mathfrak{E}}_{*}(\mathcal U)=\mathcal U\stackrel{\mbox{\footnotesize{(Theorem~\ref{diagactalgcont} item IV)}}}\Longrightarrow$ for any $x\in bG$ {\rm(}$(b G, \tilde{\mathcal U})$ is the completion of $(G, \mathcal U)${\rm)} the orbit map $\tilde\cdot_x:  (G, \mathcal U)\to (b G, \tilde{\mathcal U})$, $\tilde\cdot_x (g)=\tilde\cdot (g, x)$,  is uniformly continuous $\stackrel{\mbox{\footnotesize{(Theorem~\ref{diagactalgcont} item IV)}}}\Longrightarrow\tilde{\mathfrak{E}}_{*}(\mathcal U)\subset\mathcal U\stackrel{(a)}\Longrightarrow\tilde{\mathfrak{E}}_{*}(\mathcal U)=\mathcal U$.

\medskip

(f) The first statement follows from  Proposition~\ref{order}.  If $\mathcal U\subset\mathcal U'\subset\tilde{\mathfrak{E}}_{*}(\mathcal U)$, then 
$$\tilde{\mathfrak{E}}_{*}(\mathcal U)\subset \tilde{\mathfrak{E}}_{*}(\mathcal U')\subset \tilde{\mathfrak{E}}_{*}(\tilde{\mathfrak{E}}_{*}(\mathcal U))\stackrel{(d)}{=}\tilde{\mathfrak{E}}_{*}(\mathcal U)$$ 
and $\tilde{\mathfrak{E}}_{*}(\mathcal U)=\tilde{\mathfrak{E}}_{*}(\mathcal U')$. Applying (a) the last statement is obtained. 
\end{proof}

\begin{ex}{\rm 

1. {\it The greatest ambit of $G$} is the pair $(\beta_G G, e)$ where $\beta_G G$ is the completion of $G$ with respect to the maximal totally bounded equiuniformity $\mathcal U_{{\rm max}}$ on $G$~\cite{Brook}.

$\mathcal U_{{\rm max}}\in \mathbb{E} (G)$, the greatest ambit $\beta_G G$ is the maximal proper enveloping Ellis semigroup compactification of $G$.

Indeed,  $\mathcal U_{{\rm max}}\subset \mathfrak{E}_{*}(\mathcal U_{{\rm max}})\subset\tilde{\mathfrak{E}}_{*}(\mathcal U_{{\rm max}})\subset\mathcal U_{{\rm max}}\in\mathbb {BU}(G)$ by Theorem~\ref{mathfrak} and the maximality of $\mathcal U_{{\rm max}}\in \mathbb{BU} (G)$. Hence,  $\mathcal U_{{\rm max}}=\tilde{\mathfrak{E}}_{*}(\mathcal U_{{\rm max}})\in\mathbb{E} (G)$.

If $R=\mathcal U_{{\rm max}}$, then $G$ is a precompact group (see, for example,~\cite[\S\ 3.7]{ArhTk}) and the unique  (see, for example,~\cite{ChK2}) proper enveloping Ellis semigroup compactification of $G$ is a topological group.

For a non-Archimedian group $G$ the spectral description of  $\beta_G G$ is given in~\cite[Corollary 3.3]{pestov1998}.

\medskip

2. Let $\mathcal U_{L\wedge R}$ be the precompact reflection of the Roelcke uniformity $L\wedge R$. $\mathcal U_{L\wedge R}={\mathfrak{E}}_{*}(\mathcal U_{L\wedge R})$.

Indeed, for any cover $\{OgO\ |\ g\in G\}\in L\wedge R$, $O\in N_G(e)$, and any $h\in G$ there exists  $V\in N_G(e)$ such that $Vh\subset hO$. Hence, the orbit map $\cdot_h: (G, L\wedge R)\to (G, L\wedge R)$, $\cdot_h(g)=gh$ is uniformly continuous ($\{VgV\ |\ g\in G\}h=\{VgVh\ |\ g\in G\}\succ\{OghO\ |\ g\in G\}=\{OgO\ |\ g\in G\}$). Therefore, for the precompact reflection $\mathcal U_{L\wedge R}$ of  $L\wedge R$  the orbit map $\cdot_h: (G, \mathcal U_{L\wedge R})\to (G, \mathcal U_{L\wedge R})$, $\cdot_h(g)=gh$ is uniformly continuous

Since $L\wedge R$ is an equiuniformity for both actions: multiplication on the left and the right, its precompact reflection $\mathcal U_{L\wedge R}$ is a totally bounded equiuniformity for the multiplication on the left and the right. Hence, the actions $\cdot_l:G\times G\to G$, $\cdot_l(g, x)=gx$ and  $\cdot_r: G\times G\to G$, $\cdot_r(g, x)=xg$ extend to the continuous actions on the completion of $(G, \mathcal U_{L\wedge R})$. 

Since the inversion $\i: G\to G$, $\i(g)=g^{-1}$ is a uniform equivalence with respect to $L\wedge R$, it is a uniform equivalence with respect to $\mathcal U_{L\wedge R}$ and, hence, extends to the uniform equivalence of the completion of $(G, \mathcal U_{L\wedge R})$. 

\medskip

From Theorem~\ref{mathfrak} one has  $\mathcal U_{L\wedge R}\in\mathbb {E}(G)$ iff $\mathcal U_{L\wedge R}=\tilde{\mathfrak{E}}_{*}(\mathcal U_{L\wedge R})$.

\medskip

3. If $G$ is a locally compact group then,  the least totally bounded uniformity $\mathcal U_{\alpha}$ on $G$ is an Ellis equiuniformity and the one-point Alexandroff compactification $\alpha G$ is a proper semitopological semigroup compactification of $G$ {\rm\cite{Berg}}.

The action $\cdot: G\times G\to G$ extends to a continuous action $\tilde\cdot: G\times\alpha G\to\alpha G$. By Theorem~\ref{mathfrak} item (e) if for any $x\in \alpha G$ the map $\tilde\cdot_x:  (G, \mathcal U_{\alpha})\to (\alpha G=G\cup\{\infty\}, \tilde{\mathcal U}_{\alpha})$, $\tilde\cdot_x (g)=\tilde\cdot(g, x)$, is uniformly continuous, then  $\mathcal U_{\alpha}\in\mathbb E(G)$ and  $\alpha G\in\mathfrak E(G)$.

If $x\in G$, then the extension $\widetilde{\tilde\cdot_x}: \alpha G\to\alpha G$, $\widetilde{\tilde\cdot_x}(\infty)=\infty$,  of the map $\tilde\cdot_x:  G\to \alpha G$ is a continuous map and, hence, uniformly continuous. Therefore its restriction to $G$ is uniformly continuous.

If $x=\infty$, then $\tilde\cdot (g, \infty)=\infty$,  the map $\tilde\cdot_{\infty}:  G\to \alpha G$ is constant and, hence, uniformly continuous. 

The extension $\tilde\imath: \alpha G\to\tilde\imath (\alpha G)\subset (\alpha G)^{\alpha G}$ of $\imath$ is a topological isomorphism of semigroups and  $\tilde\imath (\infty)(x)=\widetilde{\tilde\cdot_x} (\infty)=\infty$, $x\in\alpha G$. Hence,  $\tilde\imath (\infty)$ is a continuous map, the multiplication on the left in $\tilde\imath (\alpha G)$ is continuous, and $\alpha G$ is a proper semitopological semigroup compactification of $G$.}
\end{ex}

From Proposition~\ref{connection}, Theorem~\ref{diagactalgcont}  item IV and Theorem~\ref{mathfrak} item (a) one has.

\begin{cor}\label{connectiongroup}
Let $(G=(G, \tau_p), X, \alpha)$ be a $G$-Tychonoff space, $\mathcal U_{X}\in\mathbb{BU}(X)$ and  $(b X, \tilde{\mathcal U}_X)$ is the completion of  $(X, \mathcal U_X)$ {\rm(}$G$-compactification of $X${\rm)}, $\mathcal U\in\mathbb{BU}(G)$ and  $(b G, \tilde{\mathcal U})$ is the completion of  $(G, \mathcal U)$, $\alpha_x: (G, \mathcal U)\to  (X, \mathcal U_{X})$ is a uniformly continuous $G$-map, $x\in X$, $\alpha_x (G)$ is dense in $X$ for some $x\in X$. Then  
$\tilde{\mathfrak{E}}_{\alpha} (\mathcal U_X)\subset\tilde{\mathfrak{E}}_{\star} (\mathcal U)$ and  $\mathfrak{E}_{\alpha} (\mathcal U_X)\subset\mathcal U\subset\mathfrak{E}_{\star} (\mathcal U)$.
\end{cor}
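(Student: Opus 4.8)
The plan is to recognize $\mathfrak{E}_{\star}$ and $\tilde{\mathfrak{E}}_{\star}$ as the instances $\mathfrak{E}_{\cdot}$ and $\tilde{\mathfrak{E}}_{\cdot}$ of the maps $\mathfrak{E}_{\alpha},\tilde{\mathfrak{E}}_{\alpha}$ attached to the $G$-Tychonoff space $(G,G,\cdot)$ (left multiplication), exactly as fixed in \S~\ref{descrcomp} via Lemma~\ref{cointpc}, and then to reduce the two asserted inclusions to Proposition~\ref{connection} applied to a suitable orbit map, together with the extremal characterization in Theorem~\ref{diagactalgcont} item IV and the inclusion in Theorem~\ref{mathfrak} item (a).

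First I would single out a point $x_0\in X$ for which $\alpha_{x_0}(G)$ is dense in $X$ (such a point exists by hypothesis) and check that the orbit map $\varphi:=\alpha_{x_0}\colon G\to X$ is an equivariant map from $(G,G,\cdot)$ to $(G,X,\alpha)$. Equivariance is immediate from the action axiom, since $\alpha_{x_0}(g\cdot h)=\alpha(gh,x_0)=\alpha(g,\alpha(h,x_0))=\alpha(g,\alpha_{x_0}(h))$. By hypothesis $\varphi\colon (G,\mathcal U)\to (X,\mathcal U_X)$ is uniformly continuous, and by the choice of $x_0$ its image is dense. Thus $\varphi$ meets all requirements on the map in Proposition~\ref{connection}, with the source $G$-space taken to be $(G,G,\cdot)$ carrying the totally bounded equiuniformity $\mathcal U$ and completion $(bG,\tilde{\mathcal U})$, and the target $G$-space taken to be $(G,X,\alpha)$ carrying $\mathcal U_X$.

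Applying the dense-image (non-surjective) part of Proposition~\ref{connection} then yields $\tilde{\mathfrak{E}}_{\alpha}(\mathcal U_X)\subset\tilde{\mathfrak{E}}_{\cdot}(\mathcal U)=\tilde{\mathfrak{E}}_{\star}(\mathcal U)$, which is the first asserted inclusion. For the second chain, the inclusion $\mathcal U\subset\mathfrak{E}_{\star}(\mathcal U)$ is precisely Theorem~\ref{mathfrak} item (a). The remaining inclusion $\mathfrak{E}_{\alpha}(\mathcal U_X)\subset\mathcal U$ I would obtain not from the surjective part of Proposition~\ref{connection} (which is unavailable, as $\varphi$ is only assumed to have dense image) but directly from Theorem~\ref{diagactalgcont} item IV: that result identifies $\mathfrak{E}_{\alpha}(\mathcal U_X)=\mathcal U_X^p|_{\imath(G)}$ as the least uniformity $\mathcal W$ on $G$ making every orbit map $\alpha_x\colon (G,\mathcal W)\to (X,\mathcal U_X)$, $x\in X$, uniformly continuous; since by hypothesis $\mathcal U$ already renders all these orbit maps uniformly continuous, minimality forces $\mathfrak{E}_{\alpha}(\mathcal U_X)\subset\mathcal U$.

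Since every step is a direct appeal to an already established result, there is no genuine obstacle here; the only point demanding care is the bookkeeping of identifications. In particular I would keep the source/target roles in Proposition~\ref{connection} straight (the group's own self-space $(G,G,\cdot)$ plays the domain, the phase space $X$ the codomain), and I would stress that $\mathfrak{E}_{\alpha}(\mathcal U_X)\subset\mathcal U$ cannot be read off Proposition~\ref{connection} precisely because the orbit map need only have dense image, so it must instead be deduced from the extremal property of the Ellis uniformity.
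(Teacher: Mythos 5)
Your proposal is correct and follows essentially the same route as the paper: the paper's proof is precisely the combination of Proposition~\ref{connection} (applied to the orbit map with dense image, viewed as a uniformly continuous $G$-map from $(G,G,\cdot)$ with $\mathcal U$ to $(G,X,\alpha)$ with $\mathcal U_X$) to get $\tilde{\mathfrak{E}}_{\alpha}(\mathcal U_X)\subset\tilde{\mathfrak{E}}_{\star}(\mathcal U)$, Theorem~\ref{diagactalgcont} item IV (minimality of the Ellis uniformity among uniformities making all orbit maps uniformly continuous) to get $\mathfrak{E}_{\alpha}(\mathcal U_X)\subset\mathcal U$, and Theorem~\ref{mathfrak} item (a) to get $\mathcal U\subset\mathfrak{E}_{\star}(\mathcal U)$. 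Your explicit remark that the surjective part of Proposition~\ref{connection} is unavailable and must be replaced by the extremal property is exactly the right bookkeeping, and matches the paper's intent.
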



\section{The maps $\mathfrak{E}_\alpha$ and  $\tilde{\mathfrak{E}}_\alpha$ for a uniformly equicontinuous action}\label{unifequi}

An action $\alpha: G\times (X, \mathcal L)\to  (X, \mathcal L)$ is {\it uniformly equicontinuous} if $\{\alpha^g: X\to X,\ \alpha^g(x)=\alpha (g, x)\ |\ g\in G\}$  is a uniformly equicontinuous family of maps on $X$. Equivalently, for any $u\in\mathcal L$ there exists $v\in\mathcal L$  such that $gv\succ u$, for any $g\in G$. 
If an action  $G\curvearrowright (X, \mathcal L)$ is uniformly equicontinuous, then the t.p.c. $\tau_p$ is the smallest admissible group topology on $G$~\cite[Lemma 3.1]{Kozlov}. Moreover, $((G, \tau_p), X, \alpha)$ is a $G$-Tychonoff space~\cite{Megr1984}. 

\begin{rem}{\rm Let $((G, \tau_p), X, \curvearrowright)$ be a $G$-space. If $X$ is a compactum and the action  $G\curvearrowright X$ is (uniformly)  equicontinuous, then the completion of $(G, {\mathfrak E}_{\alpha}(\mathcal U_X))$ is a compact topological group~\cite[Theorem 4.2]{Vries} (see, also~\cite[Ch.10,\ \S\ 3]{Burb} and \cite[Theorem 3.33]{Kozlov}).}
\end{rem}

Let $H$ be a neutral subgroup of a topological group $G$  (see, \cite[Definition 5.29]{RD}) without invariant subgroups. Then $G$ is a transitive uniformly equicontinuous group of homeomorphisms of $(G/H, \mathcal L)$, where $\mathcal L$ is the quotient uniformity on $G/H$ of the left uniformity $L$ on $G$ (the final uniformity with respect to the map $(G, L)\to G/H$). By~\cite[Theorem 5.28]{RD} $H$ is a neutral subgroup of $(G, \tau_p)$ and $((G, \tau_p), G/H, \alpha)$ is a $G$-Tychonoff space (with an {\it open action})~\cite[Proposition 3.2]{Kozlov}.  

\begin{thm}\label{autultr} Let $H$ be a neutral subgroup without nontrivial invariant subgroups of a topological group $G$, $\mathcal U_{G/H}$ is the maximal equiuniformity on $G/H$ for the action $\alpha: (G, \tau_p)\times G/H\to G/H$. If $\mathcal U_{G/H}\in\mathbb{BU} (G/H)$,  then 
$$\begin{array}{ccclc}
{\mathfrak E}_{\alpha} (\mathcal U_{G/H}) & \subset & L\wedge R & \subset &  R_{\Sigma_{G/H}} \\
\cap & & &  & \cap \\
\widetilde{{\mathfrak E}}_{\alpha}(\mathcal U_{G/H}) &  & \subset &  & R_{\Sigma_{\beta_G {G/H}}}, 
\end{array}\leqno{(1)}$$
{\rm (2)}  $\widetilde{{\mathfrak E}}_{\alpha}(\mathcal U_{G/H})\subset\widetilde{{\mathfrak E}}_{\star}(\mathcal U_{L\wedge R})$, where $\mathcal U_{L\wedge R}$ is the precompact reflection of $L\wedge R$,

\medskip

\noindent {\rm (3)} if $L\wedge R\in\mathbb{BU}(G)$ and $L\wedge R\subset\widetilde{{\mathfrak E}}_{\alpha}(\mathcal U_{G/H})$, then  $\widetilde{{\mathfrak E}}_{\alpha}(\mathcal U_{G/H})=\widetilde{{\mathfrak E}}_{\star}(L\wedge R)$, 

\medskip

\noindent {\rm (4)} if the condition $(\star\star)$ is valid for $(G, (b {G/H}, \tilde{\mathcal U}_{G/H}), \tilde\alpha)$, then  the condition  $(\star\star)$ is valid for $(G, ({G/H}, \mathcal U_{G/H}), \alpha)$ and ${\mathfrak E}_{\alpha} (\mathcal U_{G/H})= L\wedge R=R_{\Sigma_{G/H}}$, $\widetilde{{\mathfrak E}}_{\alpha}(\mathcal U_{G/H})=R_{\Sigma_{\beta_G {G/H}}}$, $\widetilde{{\mathfrak E}}_{\alpha}(\mathcal U_{G/H})=\widetilde{{\mathfrak E}}_{\star}(L\wedge R)$.
\end{thm}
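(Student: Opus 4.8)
The plan is to read part (1) as one genuinely new inclusion, $\mathfrak E_{\alpha}(\mathcal U_{G/H})\subset L\wedge R$, grafted onto facts already available. Since $\mathcal U_{G/H}\in\mathbb{BU}(G/H)$, the completion of $(G/H,\mathcal U_{G/H})$ is the maximal $G$-compactification, so with $X=G/H$ we have $bX=\beta_G(G/H)$; Corollary~\ref{cora2-2}(1) then furnishes the entire right-hand square $\mathfrak E_{\alpha}(\mathcal U_{G/H})\subset R_{\Sigma_{G/H}}$ and $\widetilde{\mathfrak E}_{\alpha}(\mathcal U_{G/H})\subset R_{\Sigma_{\beta_G(G/H)}}$ together with the two vertical inclusions (the left one being also Proposition~\ref{order}). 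The middle inclusion $L\wedge R\subset R_{\Sigma_{G/H}}$ is exactly Remark~\ref{a2-1}. So everything in diagram (1) reduces to proving $\mathfrak E_{\alpha}(\mathcal U_{G/H})\subset L\wedge R$.

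For that I would use the extremal description in Theorem~\ref{diagactalgcont}(IV): $\mathfrak E_{\alpha}(\mathcal U_{G/H})$ is the least uniformity on $G$ making every orbit map $\alpha_x\colon G\to(G/H,\mathcal U_{G/H})$ uniformly continuous. Hence it suffices to show that both the right uniformity $R$ and the left uniformity $L$ make all orbit maps uniformly continuous; then $\mathfrak E_{\alpha}(\mathcal U_{G/H})$ is a lower bound of $\{L,R\}$ and therefore sits inside their greatest lower bound $L\wedge R$. Uniform continuity from $R$ is automatic: the maximal equiuniformity on $G/H$ is the quotient of $R$ (base covers $\{Oy\mid y\in G/H\}$, $O\in N_G(e)$), and each orbit map is the quotient map precomposed with a right translation, an $R$-uniform isomorphism; this alone gives $\mathfrak E_{\alpha}(\mathcal U_{G/H})\subset R$. \emph{The main obstacle is uniform continuity from $L$}, since for a general equiuniformity left translations need not form a uniformly equicontinuous family. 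This is precisely where neutrality of $H$ is decisive: for neutral $H$ the quotient of the left uniformity coincides with the quotient of the right uniformity (the maximal equiuniformity), so the action on $(G/H,\mathcal U_{G/H})$ is uniformly equicontinuous, as recorded in the preamble to this section (see~\cite{RD}). With left translations uniformly equicontinuous, $\alpha_{x}(gV)=g\cdot(V\cdot x)$ lands in a small $\mathcal U_{G/H}$-ball about $g\cdot x$ for $V$ chosen small uniformly in $g$, whence every orbit map is uniformly continuous from $L$ and $\mathfrak E_{\alpha}(\mathcal U_{G/H})\subset L$. Combining the two halves gives $\mathfrak E_{\alpha}(\mathcal U_{G/H})\subset L\wedge R$; the absence of nontrivial invariant subgroups of $H$ ensures the action on $G/H$ is effective, so $\imath$ embeds $G$ and the whole construction is legitimate.

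For (2) I would apply Corollary~\ref{connectiongroup} to the orbit maps. Each $\alpha_x$ is an equivariant map from $(G,G,\cdot)$, and by transitivity $\alpha_{eH}(G)=G/H$, so its image is dense (onto). By the previous paragraph the orbit maps are uniformly continuous from $(G,L\wedge R)$; as the target $(G/H,\mathcal U_{G/H})$ is totally bounded they factor through the precompact reflection, hence are uniformly continuous from $(G,\mathcal U_{L\wedge R})$ with $\mathcal U_{L\wedge R}\in\mathbb{BU}(G)$. Corollary~\ref{connectiongroup} then yields $\widetilde{\mathfrak E}_{\alpha}(\mathcal U_{G/H})\subset\widetilde{\mathfrak E}_{\star}(\mathcal U_{L\wedge R})$.

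Parts (3) and (4) are then formal. For (3), when $L\wedge R\in\mathbb{BU}(G)$ we have $\mathcal U_{L\wedge R}=L\wedge R$, so (2) gives $\widetilde{\mathfrak E}_{\alpha}(\mathcal U_{G/H})\subset\widetilde{\mathfrak E}_{\star}(L\wedge R)$; since $\widetilde{\mathfrak E}_{\alpha}(\mathcal U_{G/H})\in\mathbb E(G)$ is fixed by $\widetilde{\mathfrak E}_{\star}$ (Theorem~\ref{mathfrak}(d)), applying Theorem~\ref{mathfrak}(f) to $L\wedge R\subset\widetilde{\mathfrak E}_{\alpha}(\mathcal U_{G/H})\subset\widetilde{\mathfrak E}_{\star}(L\wedge R)$ forces $\widetilde{\mathfrak E}_{\star}(L\wedge R)=\widetilde{\mathfrak E}_{\alpha}(\mathcal U_{G/H})$. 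For (4), Corollary~\ref{cora2-2}(3) transports $(\star\star)$ from $(G,(\beta_G(G/H),\tilde{\mathcal U}_{G/H}),\tilde\alpha)$ to $(G,(G/H,\mathcal U_{G/H}),\alpha)$, and Corollary~\ref{cora2-2}(2) yields $\mathfrak E_{\alpha}(\mathcal U_{G/H})=R_{\Sigma_{G/H}}$ and $\widetilde{\mathfrak E}_{\alpha}(\mathcal U_{G/H})=R_{\Sigma_{\beta_G(G/H)}}$. Squeezing the first equality against (1) gives $R_{\Sigma_{G/H}}=\mathfrak E_{\alpha}(\mathcal U_{G/H})\subset L\wedge R\subset R_{\Sigma_{G/H}}$, which collapses to $\mathfrak E_{\alpha}(\mathcal U_{G/H})=L\wedge R=R_{\Sigma_{G/H}}$; in particular $L\wedge R$ is now totally bounded, and as $L\wedge R=\mathfrak E_{\alpha}(\mathcal U_{G/H})\subset\widetilde{\mathfrak E}_{\alpha}(\mathcal U_{G/H})$, part (3) applies to give $\widetilde{\mathfrak E}_{\alpha}(\mathcal U_{G/H})=\widetilde{\mathfrak E}_{\star}(L\wedge R)$.
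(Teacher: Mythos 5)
Parts (2)--(4) of your proposal and your assembly of diagram (1) from Corollary~\ref{cora2-2} and Remark~\ref{a2-1} follow the paper's proof almost exactly (Proposition~\ref{connection}/Corollary~\ref{connectiongroup} for (2), Theorem~\ref{mathfrak}(d),(f) for (3), Corollary~\ref{cora2-2} plus the squeeze against (1) for (4)); your extra detail in (4) is correct. The problem lies in the one genuinely new step, the inclusion $\mathfrak E_{\alpha}(\mathcal U_{G/H})\subset L\wedge R$. Your $R$-half is fine, and the lattice reduction (a common lower bound of $L$ and $R$ lies in $L\wedge R$) is fine, but your $L$-half rests on the claim that for a neutral subgroup $H$ the quotient of the left uniformity coincides with the quotient of the right uniformity, so that the action on $(G/H,\mathcal U_{G/H})$ is uniformly equicontinuous ``as recorded in the preamble''. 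The preamble records something different and weaker: the action is uniformly equicontinuous with respect to $\mathcal L$, the quotient of $L$; it says nothing of the sort about $\mathcal U_{G/H}$, which is the quotient of $R$, and these two uniformities need not coincide.

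In fact the paper's own \S~\ref{UNIT} refutes your claim \emph{within the hypotheses of this theorem}: take $G=U({\bf H})$, $H=\st_{x_0}$ for $x_0\in{\rm S}$ ($H$ is neutral, contains no nontrivial invariant subgroup since $\bigcap_{y\in{\rm S}}\st_y=\{e\}$, and $\mathcal U_{G/H}=\mathcal U_{\rm S}$ is totally bounded by Proposition~\ref{unitmax}). Here the quotient of $L$ contains the norm-metric uniformity of ${\rm S}$ (the map $q:(G,L)\to({\rm S},\mbox{norm})$ is uniformly continuous because $\|gx_0-g'x_0\|=\|x_0-g^{-1}g'x_0\|$), which is not totally bounded, whereas $\mathcal U_{\rm S}$ is the totally bounded weak uniformity (Lemma~\ref{unifunit}); so the two quotients differ. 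Moreover the action on $({\rm S},\mathcal U_{\rm S})$ is \emph{not} uniformly equicontinuous: fix the weak entourage determined by $x_0$ and $\epsilon<2$; for any finite-dimensional $M$ and $\delta>0$ choose a unit vector $u\perp M$ and $g\in U({\bf H})$ with $g^{-1}x_0=u$; then $y=u$, $z=-u$ satisfy $\|\pr_M(y)-\pr_M(z)\|=0<\delta$ while $|(gy-gz,x_0)|=|(y-z,g^{-1}x_0)|=2>\epsilon$. So your premise fails precisely in the situation to which the theorem is later applied (Theorem~\ref{Roelcke precompHilb} quotes item (1)). The conclusion you want --- that the orbit maps are $L$-uniformly continuous --- is true, but it is an output of the theorem, not something available at this point. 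The missing ingredient, which the paper uses, is Proposition 3.25 of~\cite{Kozlov}: for neutral $H$ one has $\mathcal U_{G/H}=\mathcal L\wedge\mathcal R$, the quotient of the Roelcke uniformity $L\wedge R$. Granting this, every orbit map factors as $q\circ\rho_x$ (a right translation, which is $L\wedge R$-uniformly continuous, followed by the quotient map $q:(G,L\wedge R)\to(G/H,\mathcal L\wedge\mathcal R)$, uniformly continuous by definition of the quotient uniformity), and Theorem~\ref{diagactalgcont}~IV together with Corollary~\ref{connectiongroup} then gives $\mathfrak E_{\alpha}(\mathcal U_{G/H})\subset L\wedge R$ in one step, with no need to treat $L$ and $R$ separately.
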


\begin{proof} (1) The inclusions ${\mathfrak E}_{\alpha}(\mathcal U_{G/H})\subset \widetilde{{\mathfrak E}}_\alpha(\mathcal U_{G/H})$, $R_{\Sigma_{G/H}}\subset R_{\Sigma_{\beta_G {G/H}}}$ and $\widetilde{{\mathfrak E}}_{\alpha}(\mathcal U_{G/H})\subset R_{\Sigma_{\beta_G {G/H}}}$ follow from Corollary~\ref{cora2-2}. The inclusion  $L\wedge R\subset R_{\Sigma_{G/H}}$ follows from Remark~\ref{a2-1}. 

By Proposition 3.25~\cite{Kozlov} $\mathcal U_{G/H}=\mathcal L\wedge\mathcal R$, where $\mathcal L\wedge\mathcal R$ is the quotient uniformity of $L\wedge R$ on $G$ for the quotient map $q: G\to G/H$ ($\mathcal U_{G/H}$ is the quotient uniformity of the right uniformity $R$ on $G$). 

The inclusion ${\mathfrak E}_{\alpha}(\mathcal U_{G/H})\subset L\wedge R$ follows from the uniform continuity of the surjective orbit maps $\alpha_x: (G, L\wedge R)\to (G/H, \mathcal L\wedge\mathcal R)$, $x\in G/H$, by Corollary~\ref{connectiongroup}.

\medskip

(2) The surjective $G$-map $q: (G,  L\wedge R)\to (G/H, \mathcal L\wedge\mathcal R)$ is uniformly continuous. Since $\mathcal L\wedge\mathcal R\in\mathbb{BU}({G/H})$, for the precompact reflection $\mathcal U_{L\wedge R}$ of $L\wedge R$ the map  $q: (G, \mathcal U_{L\wedge R})\to (G/H, \mathcal L\wedge\mathcal R)$ is uniformly continuous. By Proposition~\ref{connection} $\widetilde{{\mathfrak E}}_{\alpha}(\mathcal U_{G/H})\subset\widetilde{{\mathfrak E}}_{\star}(\mathcal U_{L\wedge R})$.

\medskip

(3) $L\wedge R=\mathcal U_{L\wedge R}$. From the inclusions $L\wedge R\subset\widetilde{{\mathfrak E}}_{\alpha}(\mathcal U_{G/H})\subset\widetilde{{\mathfrak E}}_{\star}(L\wedge R)$ (item (2)) by items (d) and  (f) of Theorem~\ref{mathfrak}  $\widetilde{{\mathfrak E}}_{\alpha}(\mathcal U_{G/H})=\widetilde{{\mathfrak E}}_{\star}(L\wedge R)$, since  $\widetilde{{\mathfrak E}}_{\alpha}(\mathcal U_{G/H})\in\mathbb E (G)$.

\medskip

(4) If the condition $(\star\star)$ is valid for $(G, (b G/H, \tilde{\mathcal U}_{G/H}), \tilde\alpha)$, then  the condition  $(\star\star)$ is valid for $(G, (G/H, \mathcal U_{G/H}), \alpha)$ by Corollary~\ref{cora2-2} item (3). The rest follows from items (1), (3) and Proposition~\ref{a2-2}. 
\end{proof}


\subsection{The maps $\mathfrak{E}_\alpha$ and  $\tilde{\mathfrak{E}}_\alpha$ for an action on a discrete space}

Let $G$ be a subgroup of the permutation group ${\rm S}(X)$  of a discrete (infinite) space $X$. A discrete space $X$ can be considered as a metric space with distance $1$ between distinct points. Then  ${\rm S}(X)$ is an isometry group and $G$ is its subgroup. The permutation topology $\tau_{\partial}$ on $G$ is the smallest admissible group topology (and coincides with the topology of pointwise convergence) for the action $\alpha: G\times X\to X$. The base of the nbds of the unit of $(G, \tau_{\partial})$ is formed by clopen subgroups 
$$\st_{\sigma}=\{g\in G\ |\ \alpha (g, x)=x,\ x\in\sigma\},\  \sigma\in\Sigma_X,$$
and  $(G, \tau_{\partial})$ is non-Archimedean.

\begin{lem}\label{coralpha}
Let $X$ be discrete, $G$ is a subgroup of  ${\rm S}(X)$. Then for $(G,  \tau_{\partial})$ 
$$L\wedge R=R_{\Sigma_X}.$$
\end{lem}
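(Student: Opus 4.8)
The plan is to exhibit one family of covers that is simultaneously a base for $L\wedge R$ and for $R_{\Sigma_X}$, after which equality is immediate. The single structural fact driving everything is that in the permutation topology $\tau_{\partial}$ the clopen stabilizers $\{\st_\sigma\mid\sigma\in\Sigma_X\}$ form a base of $N_G(e)$, and this family is downward directed, since $\st_{\sigma\cup\tau}=\st_\sigma\cap\st_\tau\subseteq\st_\sigma,\st_\tau$ for all $\sigma,\tau\in\Sigma_X$. So I would first record this, and then rewrite both standard bases in terms of the symmetric double--coset covers $w_\sigma:=\{\st_\sigma\, g\,\st_\sigma\mid g\in G\}$, $\sigma\in\Sigma_X$.

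For $L\wedge R$: every $O\in N_G(e)$ contains some $\st_\tau$, so the Roelcke base cover $\{OgO\mid g\in G\}$ is refined by $w_\tau$ (as $\st_\tau g\st_\tau\subseteq OgO$), while each $w_\sigma$ is itself a Roelcke base cover with $O=\st_\sigma\in N_G(e)$. Hence the covers $w_\sigma$, $\sigma\in\Sigma_X$, form a base of $L\wedge R$. For $R_{\Sigma_X}$: its base covers are $\{O g\,\st_\sigma\mid g\in G\}$; given such a cover, choose $\tau$ with $\st_\tau\subseteq O$ and put $\rho=\sigma\cup\tau$, so that $\st_\rho\subseteq\st_\tau\subseteq O$ and $\st_\rho\subseteq\st_\sigma$, whence $\st_\rho g\st_\rho\subseteq Og\,\st_\sigma$ for every $g$ and therefore $w_\rho\succ\{Og\,\st_\sigma\mid g\in G\}$. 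Thus the same covers $w_\sigma$, $\sigma\in\Sigma_X$, also form a base of $R_{\Sigma_X}$.

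Having identified a common base, I would conclude $L\wedge R=R_{\Sigma_X}$. (Alternatively, the inclusion $L\wedge R\subseteq R_{\Sigma_X}$ is the general fact recorded in Remark~\ref{a2-1}, so one could instead verify only the reverse inclusion, i.e. that each $w_\sigma\in R_{\Sigma_X}$ is refined by a Roelcke cover; this is immediate because $w_\sigma$ is itself a Roelcke base cover.)

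I do not expect a genuine obstacle: the whole argument is a matter of matching bases. The one point demanding care is the asymmetry between the $R_{\Sigma_X}$ covers $\{Og\,\st_\sigma\}$ (a general left factor $O$ against a stabilizer right factor) and the symmetric Roelcke covers $\{OgO\}$. Both collapse to the symmetric double--coset covers $w_\sigma$ precisely because the directed family of stabilizers is cofinal in $N_G(e)$ — which is exactly the defining feature of the permutation (equivalently, non-Archimedean) topology, and is why the equality need not hold for a general topological group.
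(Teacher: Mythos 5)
Your proof is correct and follows essentially the same route as the paper: the paper's (terser) argument likewise observes that the symmetric double-coset covers $\{\st_{\sigma}g\st_{\sigma}\ |\ g\in G\}$, $\sigma\in\Sigma_X$, form a common base for both $L\wedge R$ and $R_{\Sigma_X}$, using that the stabilizers are a neighbourhood base at the unit in $\tau_{\partial}$. You merely spell out the refinement details (including the $\rho=\sigma\cup\tau$ step for the asymmetric covers $\{Og\st_{\sigma}\}$) that the paper leaves implicit.
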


\begin{proof}
The base of the Roelcke uniformity $L\wedge R$ on  $(G, \tau_{\partial})$ is formed by the covers
$$\{\st_{\sigma}g\st_{\sigma}\ |\ g\in G\},\ \sigma\in\Sigma_X,$$
which form the base of  the uniformity $R_{\Sigma_X}$ defined by the family of stabilizers $\st_{\sigma}$, $\sigma\in\Sigma_X$ (see \S~\ref{uniftopgr}). Hence, $L\wedge R=R_{\Sigma_X}$.
\end{proof}


\subsubsection{The maps $\mathfrak{E}_\alpha$ and  $\tilde{\mathfrak{E}}_\alpha$ for an ultratransitive action on a discrete space}\label{discrete}

\begin{thm}\label{Roelcke precomp4-1} Let $X$ be a discrete space and $\mathcal U_X$ is the maximal equiuniformity on $X$ for the action $\alpha:({\rm S}(X), \tau_{\partial})\times X\to X$.  Then 
\begin{itemize}
\item[{\rm (a)}] $\mathbb{BU}(X)=\{\mathcal U_X\}$ and $\tilde X=\alpha X$, where $(\tilde X, \tilde{\mathcal U}_X)$ is the completion of $(X, \mathcal U_X)$, $\alpha X=X\cup\{\infty\}$ is the Alexandroff one-point compactification of $X$, 
\item[{\rm (b)}]  all inclusions in {\rm Theorem~\ref{autultr}} item {\rm(1)} are equalities and $\tilde{\mathfrak{E}}_{\alpha}(\mathcal U_X)=L\wedge R$ {\rm(}hence, $({\rm S}(X), \tau_{\partial})$ is Roelcke precompact and the Roelcke compactification $(b_r {\rm S}(X), b_r)$ of $({\rm S}(X), \tau_{\partial})$ is  a proper enveloping Ellis semigroup compactification{\rm)}, 
\item[{\rm (c)}]  $b_r {\rm S}(X)$ is the set of selfmaps $f$ of $\alpha X$  in the t.p.c.  such that $f(\infty)=\infty$, $f$ is a bijection on $Y\subset X$ and $f(X\setminus Y)=\infty$, where $Y$ is an arbitrary subset of $X$, 
\item[{\rm (d)}] $b_r {\rm S}(X)$ is a semitopological semigroup.
\end{itemize}
\end{thm}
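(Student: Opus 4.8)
The plan is to establish (a) from first principles, feed it into the apparatus of \S\ref{unifequi} to get (b), and then read off (c) and (d) by analysing the closure inside $(\alpha X)^{\alpha X}$. For (a) I would first verify that the Alexandroff uniformity $\mathcal U_\alpha$, with basic covers $\{X\setminus\sigma\}\cup\{\{x\}:x\in\sigma\}$ for $\sigma\in\Sigma_X$, is an equiuniformity: saturation is clear because every permutation fixes the added point $\infty$ and hence extends to a self-homeomorphism of $\alpha X$, while boundedness follows by taking $O=\st_\sigma$ and $v=u$, since $\st_\sigma$ fixes $\sigma$ pointwise and permutes $X\setminus\sigma$. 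The decisive direction is the reverse containment. Given any equiuniform cover $u$, boundedness yields $\sigma$ and $v$ with $\st_\sigma v\succ u$; since $\st_\sigma x=X\setminus\sigma$ for every $x\notin\sigma$ (full transitivity of the stabilizer on the complement), any $V\in v$ meeting $X\setminus\sigma$ has $\st_\sigma V\supseteq X\setminus\sigma$, so the member of $u$ refining $\st_\sigma V$ is cofinite. Thus $u$ is refined by an Alexandroff cover, i.e.\ the maximal equiuniformity is contained in $\mathcal U_\alpha$; as $\alpha X$ is the least compactification of the discrete space $X$, every totally bounded uniformity conversely dominates $\mathcal U_\alpha$. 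The two containments give $\mathbb{BU}(X)=\{\mathcal U_X\}=\{\mathcal U_\alpha\}$ and $\tilde X=\alpha X$.

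For (b) the strategy is to verify condition $(\star\star)$ for the extended action on $\alpha X$ and then invoke Theorem~\ref{autultr}(4) (with $H=\st_{x_0}$, so that $G/H=X$ and $\beta_G X=\alpha X$ by (a)). A preliminary computation shows that for a basic entourage ${\rm U}$ of $\tilde{\mathcal U}_X$ determined by a finite set $\tau$ (relating $p,q$ iff $p=q$ or both lie outside $\tau$) one has $O_{\rm U}=\st_\tau$. I would then check $(\star\star)$ with ${\rm V}={\rm U}$: assuming $(g(x),h(x))\in{\rm U}$ for all $x\in\sigma$, i.e.\ $g(x)=h(x)$ or both $g(x),h(x)\notin\tau$, the partial assignment $s\mapsto g(s)$ on $\sigma$ together with $h^{-1}(t)\mapsto t$ on $h^{-1}(\tau)$ is a well-defined injection (any apparent collision forces $g(s)=h(s)$ by the hypothesis, since a value in $\tau$ rules out the ``both outside'' alternative), and because $G$ is the full symmetric group it extends to a permutation $g'$ with $g'\in g\st_\sigma$ and $h\in\st_\tau g'$. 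By Theorem~\ref{autultr}(4) the top row and left column of Theorem~\ref{autultr}(1) collapse; finally, since $\infty$ is fixed by every permutation one has $\st_\sigma=\st_{\sigma\cap X}$, whence $R_{\Sigma_{\alpha X}}=R_{\Sigma_X}$, and by Lemma~\ref{coralpha} all four uniformities coincide with $L\wedge R=\tilde{\mathfrak E}_\alpha(\mathcal U_X)$. Roelcke precompactness and the Ellis property of $b_r{\rm S}(X)$ then follow at once. I expect this $(\star\star)$ verification to be the main obstacle, both because the partial-bijection extension must be organised carefully and because one must keep the direction of the refinement relation $\succ$ straight throughout.

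For (c) I would work inside $(\alpha X)^{\alpha X}$, where $b_r{\rm S}(X)=\cl~\tilde\imath({\rm S}(X))$ by (b) and Theorem~\ref{diagactalgcont}(III(b$'$)). Reading off pointwise limits of permutations gives the forward inclusion: any $f$ in the closure fixes $\infty$, and on $Y=\{x\in X:f(x)\in X\}$ it is injective, since two points with a common value in $X$ would be eventually equal under the approximating net, contradicting injectivity of permutations, while $f(X\setminus Y)=\infty$ by the definition of $Y$. Conversely, every $f$ of this form is a pointwise limit: over the directed set of pairs $(F,\tau)$ of finite subsets of $X$, one extends the finite partial injection $f|_{F\cap Y}$ to a permutation sending $F\setminus Y$ to distinct points outside $\tau\cup f(F\cap Y)$, which is possible because $X$ is infinite; these permutations converge to $f$.

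For (d) the key observation is that every $f\in b_r{\rm S}(X)$ is in fact continuous on $\alpha X$: it is automatically continuous at the isolated points of $X$, and continuous at $\infty$ because the preimage of a basic neighbourhood $\{\infty\}\cup(X\setminus\tau)$ omits only the finite set $(f|_Y)^{-1}(\tau)$ (finite as $f|_Y$ is injective and $\tau$ finite) and is therefore a neighbourhood of $\infty$. Since left composition by a continuous map is continuous in the topology of pointwise convergence (\S\ref{funcsp}), all left translations on $b_r{\rm S}(X)$ are continuous; together with the right-topological structure inherited from $(\alpha X)^{\alpha X}$, this shows that multiplication is separately continuous, so $b_r{\rm S}(X)$ is a semitopological semigroup.
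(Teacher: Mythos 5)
Your proof is correct, and its overall architecture coincides with the paper's: identify the maximal equiuniformity with the Alexandroff uniformity for (a), run the $(\star\star)$-machinery of Proposition~\ref{a2-2} for (b), and analyse $\cl~\tilde\imath({\rm S}(X))$ inside $(\alpha X)^{\alpha X}$ for (c) and (d). The one place where your route genuinely differs is (b). The paper first disposes of the gap between the two Ellis uniformities directly: since every $g\in {\rm S}(X)$ and every element of the closure fixes $\infty$, the restriction of $\pr_{X}:(\alpha X)^{\alpha X}\to(\alpha X)^{X}$ to $\tilde\imath({\rm S}(X))$ is an isomorphism, so $\mathfrak{E}_{\alpha}(\mathcal U_X)=\tilde{\mathfrak{E}}_{\alpha}(\mathcal U_X)$; it then only has to verify $(\star\star)$ for the action on the discrete space $X$ itself, which it does by a two-case construction (according to whether $g(x)\in\theta$ for $x\in\sigma$). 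You instead verify $(\star\star)$ for the extended action on $\alpha X$ --- your partial-injection argument ($s\mapsto g(s)$ on $\sigma$ glued with $h^{-1}(t)\mapsto t$ on $h^{-1}(\tau)$, checked to be well defined and injective via the hypothesis, then extended to a permutation of the infinite set $X$) is a clean, unified replacement for the paper's case split, and your computation $O_{\rm U}=\st_\tau$ is right --- and then you collapse the diagram via Theorem~\ref{autultr}(4), finishing with the same observation $\st_\infty={\rm S}(X)$, hence $R_{\Sigma_{\alpha X}}=R_{\Sigma_X}$, that the paper uses (the neutrality of $H=\st_{x_0}$ needed for Theorem~\ref{autultr} is immediate: $\st_{\sigma\cup\{x_0\}}\st_{x_0}\subseteq\st_{x_0}$ for every $\sigma$). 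What the paper's projection trick buys is that $(\star\star)$ need only be checked on $X$; what your route buys is the stronger statement ($(\star\star)$ on the compactification) proved in one stroke. Your (a) is a first-principles version of the paper's appeal to the known description of the maximal equiuniformity on coset spaces, your (c) matches the paper's argument on both inclusions, and in (d) your positive continuity-at-$\infty$ argument (the preimage of a cofinite set is cofinite because $f|_Y$ is injective) replaces the paper's proof by contradiction; these last differences are cosmetic.
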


\begin{proof} (a) The base of the maximal equiuniformity $\mathcal U_X$ on $X$ is formed by the covers 
$$u_{\theta}=\{\st_{\theta}x=\alpha (\st_{\theta}, x)\ |\ x\in X\},\ \theta\in\Sigma_X,\ {\rm(}u_{\theta'}\succ u_{\theta}\ \mbox{if}\ \theta\subset\theta'{\rm )},$$
see, for example, \cite{ChK}. Since $\st_{\theta}x=x$ if $x\in\theta$ and  $\st_{\theta}x=\alpha X\setminus\theta$ if $x\notin\theta$ they are of the  form 
$$\{\{\{x\}\ |\ {x\in\theta}\},\ X\setminus\theta\},\ \theta\in\Sigma_X,$$
and coincides with the restriction on $X$ of the unique uniformity on $\alpha X$. Hence $\alpha X=\tilde X$ and the maximal equiuniformity $\mathcal U_X$ on $X$ is totally bounded and unique.

(b) From Lemma~\ref{coralpha} $L\wedge R=R_{\Sigma_X}$. Since  $\st_{\infty}={\rm S}(X)$ for the extended action $\tilde\alpha: {\rm S}(X)\times\alpha X\to\alpha X$ (continuity of each $g\in {\rm S}(X)$ yields that $\tilde\alpha (g, \infty)=\infty$),  $R_{\Sigma_X}=R_{\Sigma_{\alpha X}}$. Since $\tilde\alpha_{\infty}(g)=\tilde\alpha (g, \infty)=\infty$, $g\in {\rm S}(X)$, the restriction to $\tilde\imath ({\rm S}(X))$ of the projection $\pr_{X}: (\alpha X)^{\alpha X}\to (\alpha X)^X$ is a topological isomorphism and $\mathfrak{E}_{\alpha}(\mathcal U_X)=\widetilde{\mathfrak{E}}_{\alpha}(\mathcal U_X)$.

To end the proof of (b) it is enough to check the inclusion $\mathfrak{E}_{\alpha}(\mathcal U_X)\supset R_{\Sigma_X}$. In the case of the equiuniformity  $\mathcal U_X$ (after identification of coinciding sets, the covers from the base consist of pairwise disjoint sets) the condition $(\star\star)$ in Proposition~\ref{a2-2} can be reformulated in the following form in terms of covers $u_{\sigma}\in\mathcal U_X$, 
$$\begin{array}{c}
\mbox{\rm if for}\ g, h\in {\rm S}(X)\ \ \alpha(h, x)\in\st_{\theta}\alpha(g, x)\ \mbox{\rm holds for}\ x\in\sigma,\ \mbox{\rm then}\\
\mbox{\rm there exists}\ \ g'\in g\st_{\sigma}\ \ \mbox{\rm such that}\ \ h\in\st_{\theta}g',\ \mbox{where, without loss of generality,}\ \sigma\subset\theta\in\Sigma_X.\\ 
\end{array}$$
For any $g, h\in {\rm S}(X)$ such that $\alpha(h, x)\in\st_{\theta}\alpha(g, x)$, $x\in\sigma$, we consider, without loss of generality, the following. 
 
Put $\sigma'=\{x\in\sigma\ |\ \alpha (g, x)\in\theta\}\subset\sigma$. Then $\alpha (h, x)=\alpha (g, x)\in\theta$, $x\in\sigma'$. If $\sigma'\ne\sigma$, then $\alpha (g, x)\not\in\theta$,  $\alpha (h, x)\not\in\theta$ for  $x\in\sigma\setminus\sigma'$.

If $\sigma'=\sigma$, then $hg^{-1}\in\st_{\theta}$ and $h\in\st_{\theta}g$, $g\in g\st_{\sigma}$.

\medskip 
 
If $\sigma'\ne\sigma$, then there exists $f\in {\rm S}(X)$ such that the points $x\in\theta, \alpha (g, y)$, where  $y\in\sigma\setminus\sigma'$, are mapped respectively to the points $x\in\theta, \alpha (h, y), y\in\sigma\setminus\sigma'$ (for fixed orders on $\theta$ and $\sigma\setminus\sigma'$).  $f\in\st_{\theta}$ and $\alpha (h, x)=\alpha (fg, x),\ x\in\sigma$. 
Hence, $h^{-1}f g\in\st_{\sigma}$. Consequently $f g\in h\st_{\sigma}$ (equivalently $h\in (fg)\st_{\sigma}$) and there is $g'\in g\st_{\sigma}$ such that $h\in \st_{\theta}g'$. The  inclusion $\mathfrak{E}_{\alpha}(\mathcal U_X)\supset R_{\Sigma_X}$ and, hence, the equality $\mathfrak{E}_{\alpha}(\mathcal U_X)=R_{\Sigma_X}$ are proved.

\medskip

(c) At first, let us note that for any $f\in\cl~\tilde\imath ({\rm S}(X))$ there are no points $x\ne y\in X$ such that $f(x)=f(y)\in X$. Indeed, suppose that there are $x\ne y\in X$ such that $f(x)=f(y)\in X$. One can take the nbd $W=\{h\in\cl~\tilde\imath ({\rm S}(X))\ |\ h(x)=h(y)=f(x)\}$ of $f$. Then $W\cap\tilde\imath ({\rm S}(X))=\emptyset$ and $f\not\in\cl\tilde\imath ({\rm S}(X))$. 

Secondly, $f(\infty)=\infty$ for any $f\in\cl~\tilde\imath ({\rm S}(X))$. Indeed, if $f(\infty)=x\in X$, then take the nbd $W=\{h\in\cl~\tilde\imath ({\rm S}(X))\ |\ h(\infty)=x\}$ of $f$ and $W\cap\tilde\imath ({\rm S}(X))=\emptyset$ since  $g(\infty)=\infty$ for any $g\in\tilde\imath ({\rm S}(X))$.

It remains to check that $f\in b_r {\rm S}(X)$ if $f(\infty)=\infty$, $f$ is a bijection on $Y\subset X$ and $f(X\setminus Y)=\infty$, where $Y$ is an arbitrary subset of $X$. Any nbd of $f$ is of the form $\{g\in (\alpha X)^{\alpha X}\ |\ g(x)\in Of(x),\ x\in\sigma\}\cap b_r {\rm S}(X)$, $\sigma\in\Sigma_X$, where $Of(x)=f(x)$ if $f(x)\ne\infty$, and $Of(x)=\alpha X\setminus\sigma_x$, $\sigma_x\in\Sigma_X$, if $f(x)=\infty$. By finitness of $\sigma$ and $\sigma_x$, $x\in\sigma$ there is $h\in {\rm S}(X)$ such that $h(x)=f(x)$ if $x\in\sigma$ and $f(x)\in X$, and $h(x)\not\in\sigma_x$, if $x\in\sigma$ and $f(x)=\infty$. Evidently, $h\in\{g\in (\alpha X)^{\alpha X}\ |\ g(x)\in Of(x),\ x\in\sigma\}\cap b_r {\rm S}(X)$ and $f\in b_r {\rm S}(X)$.

\medskip

(d) From (b) $b_r {\rm S}(X)=\cl~\tilde\imath ({\rm S}(X))$ is a right topological semigroup, where $\tilde\imath$ is a topological isomorphism of ${\rm S}(X)$ into $(\alpha X)^{\alpha X}$, see \S~\ref{digonal}. If one shows that all elements of $\cl\tilde\imath ({\rm S}(X))$ are continuous (as selfmaps of $\alpha X$), then multiplication on the left is continuous. To check the continuity of $f\in\cl~\tilde\imath ({\rm S}(X))$ it is sufficient to check the continuity of $f$ at the point $\infty$.

Suppose that $f\in\cl~\tilde\imath ({\rm S}(X))$ is not continuous at the point $\infty$. Then there exists a nbd $W=\alpha X\setminus\sigma$ of $\infty$ for some $\sigma\in\Sigma_X$  such that the set $\{x\in X\ |\ f(x)\in\sigma\}$ is infinite.  Since $\sigma$ is finite, there exist $x\ne y\in X$ such that $f(x)=f(y)$. Therefore,  $f\not\in\cl~\tilde\imath ({\rm S}(X))$. The obtained contradiction shows that all  $f\in\cl~\tilde\imath ({\rm S}(X))$ are continuous and  $b_r {\rm S}(X)$ is a semitopological semigroup.
\end{proof}

\begin{rem}
{\rm (1)  $b_r  {\rm S}(X)$ is the set of maps $f: X\to\alpha X$  in the t.p.c. such that $f$ is a bijection on $Y\subset X$ and $f(X\setminus Y)=\infty$, where $Y$ is an arbitrary subset of $X$.

(2) In~\cite[\S\ 12, Theorem 12.2]{GlasnerMegr2008} the description of $b_r  {\rm S}(\mathbb N)$ is given. It is also shown that $b_r  {\rm S}(\mathbb N)$ is a semitopological semigroup homeomorphic to the Cantor set.}
\end{rem}

\begin{df} An action of a group $G$ on a set $X$ is strongly $n$-transitive, $n\geq 1$, if for any families of distinct $n$ points $x_1, \ldots, x_n$ and $y_1, \ldots, y_n$ there exists $g\in G$ such that $g(x_k)=y_k$, $k=1,\ldots, n$. 

An action $G\curvearrowright X$, which is strongly $n$-transitive for all $n\in\mathbb N$, is called ultratransitive.
\end{df}

\begin{rem}\label{remultr}{\rm 
(1) A group $G$ which acts ultratransitively on $X$ is a dense subgroup of $({\rm S} (X), \tau_{\partial})$. 

Indeed, let a set $O$ be open in $({\rm S}(X), \tau_{\partial})$ and $g\in O$. Then there are $\sigma\in\Sigma_X$ such that $g\st_{\sigma}\subset O$ and $g\st_{\sigma}=\{h\in G\ |\ h(x)=g(x),\ x\in\sigma\}$ is an open neighbourhood of $g$. Since $G$ acts ultratransitively on $X$, there is $h\in G$ such that $h(x)=g(x)$, $x\in\sigma$. Evidently, $h\in O$.

(2) The subgroup ${\rm S}_{<\omega}(X)$ of the group ${\rm S}(X)$ whose elements has finite supports acts ultratransitively on $X$. Hence, ${\rm S}_{<\omega} (X)$ is a dense subgroup of $({\rm S}(X), \tau_{\partial})$. The group whose elements have finite supports is a subgroup of any Houghton's group (see, for example, \cite{Cox}). Hence, Houghton's groups act ultratransitively on the corresponding countable discrete spaces.

(3)  Any group $G$ which acts ultratransitively on $X$ is {\it oligomorphic} and, hence,  Roelcke precompact (see, \cite{Tsan}, \cite{Sorin1}). The Roelcke precompactness of $({\rm S}(X), \tau_{\partial})$ is proved in~\cite{Gau}  (see also~\cite[Example 9.14]{RD}), 
the Roelcke precompactness of $({\rm S}_{<\omega}(X), \tau_{\partial})$ is proved in~\cite{Ban}. 

From (1) it follows that the Roelcke compactification of a group $G$ which acts ultratransitively on $X$ is isomorphic to the Roelcke compactification of  $({\rm S}(X), \tau_{\partial})$.

(4) A space $X$ is {\it ultrahomogeneous} if the action of its homeomorphism group $\Hom(X)$ is ultratransitive. Locally compact metrizable CDH spaces whose complement to any finite subset is connected are ultrahomogeneous spaces. They include the spheres $S^{n-1}$ in Euclidean spaces $\mathbb R^n$, $n\geq 3$, the Hilbert cube $Q$ (see, for example,~\cite{ArM}). 

The group of homeomorphisms of an ultrahomogeneous space with the permutation topology is Roelcke precompact, although the permutation topology is not necessarily admissible. The homeomorphism groups of the spheres $S^{n}$, $n\geq 2$, and the Hilbert cube $Q$ in the compact open topology (the smallest admissible group topology) are not Roelcke precompact~\cite{Ros}. Hence, in any admissible group topology, these groups are not Roelcke precompact.}
\end{rem}

\begin{cor}\label{cordis}
Let $G$ acts ultratransitively on a discrete space $X$. Then for the action $(G, \tau_{\partial})\curvearrowright X$ items {\rm (a)} and {\rm (b)} of {\rm Theorem~\ref{Roelcke precomp4-1}} hold and $b_r (G, \tau_{\partial})$ and $b_r {\rm S}(X)$ are topologically isomorphic semitopological semigroups. 
\end{cor}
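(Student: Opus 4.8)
The plan is to show that items (a) and (b) of Theorem~\ref{Roelcke precomp4-1} depend only on the ultratransitivity of the acting group and not on it being all of ${\rm S}(X)$, and then to identify the two Roelcke compactifications as closures of a common dense set inside the ambient right topological semigroup $(\alpha X)^{\alpha X}$. The guiding observation is that the only property of ${\rm S}(X)$ invoked in the proof of Theorem~\ref{Roelcke precomp4-1} is its ultratransitivity, so the argument transcribes verbatim with $G$ in place of ${\rm S}(X)$.

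First I would establish item (a). The maximal equiuniformity $\mathcal U_X$ for $(G, \tau_{\partial})\curvearrowright X$ has base covers $u_{\theta}=\{\st_{\theta}x\mid x\in X\}$, $\theta\in\Sigma_X$, and the only computation needed is that of $\st_{\theta}x$. For $x\in\theta$ this is $\{x\}$; for $x\notin\theta$, ultratransitivity of $G$ furnishes, for every $y\notin\theta$, an element fixing $\theta$ pointwise and carrying $x$ to $y$ (apply strong $(|\theta|+1)$-transitivity to the distinct tuples $\theta\cup\{x\}$ and $\theta\cup\{y\}$), so $\st_{\theta}x=X\setminus\theta$. Thus the covers are exactly those appearing in the proof of Theorem~\ref{Roelcke precomp4-1}(a), they coincide with the restriction to $X$ of the unique uniformity on $\alpha X$, and $\tilde X=\alpha X$ with $\mathcal U_X$ totally bounded. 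Uniqueness $\mathbb{BU}(X)=\{\mathcal U_X\}$ then follows as there: every totally bounded equiuniformity is dominated by the maximal equiuniformity $\mathcal U_X$, whose completion $\alpha X$ is simultaneously the maximal $G$-compactification and the minimal compactification of the locally compact noncompact space $X$, forcing equality.

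Next I would transcribe the proof of item (b). Lemma~\ref{coralpha} gives $L\wedge R=R_{\Sigma_X}$ for any subgroup of ${\rm S}(X)$, and continuity of each $g\in G$ forces $\tilde\alpha(g,\infty)=\infty$, so $\st_{\infty}=G$, whence $R_{\Sigma_X}=R_{\Sigma_{\alpha X}}$ and $\mathfrak{E}_{\alpha}(\mathcal U_X)=\tilde{\mathfrak{E}}_{\alpha}(\mathcal U_X)$, just as in Theorem~\ref{Roelcke precomp4-1}(b). It then remains to verify condition $(\star\star)$ of Proposition~\ref{a2-2}; the only step using the structure of ${\rm S}(X)$ is the production of $f\in\st_{\theta}$ sending each $\alpha(g,y)$ to $\alpha(h,y)$ for $y$ ranging over the finite set on which $\alpha(g,\cdot)\notin\theta$. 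Since these source and target points are finite, distinct, and lie off $\theta$, ultratransitivity of $G$ supplies such an $f\in G$, so the argument goes through and yields $\mathfrak{E}_{\alpha}(\mathcal U_X)=R_{\Sigma_X}=L\wedge R=\tilde{\mathfrak{E}}_{\alpha}(\mathcal U_X)$; in particular $(G,\tau_{\partial})$ is Roelcke precompact and its Roelcke compactification is a proper enveloping Ellis semigroup compactification.

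Finally, for the identification, I would use Remark~\ref{remultr}(1): $G$ is dense in $({\rm S}(X),\tau_{\partial})$. The embedding $\tilde\imath_G\colon G\to(\alpha X)^{\alpha X}$ is the restriction of the corresponding map for ${\rm S}(X)$, which is a topological isomorphism onto its image in the topology of pointwise convergence; hence $\tilde\imath(G)$ is dense in $\tilde\imath({\rm S}(X))$ and the two share the same closure in the compact space $(\alpha X)^{\alpha X}$. Therefore $b_r(G,\tau_{\partial})=\cl~\tilde\imath(G)=\cl~\tilde\imath({\rm S}(X))=b_r {\rm S}(X)$ coincide as subsets of the right topological semigroup $(\alpha X)^{\alpha X}$, so they carry one and the same composition, and by Theorem~\ref{Roelcke precomp4-1}(d) both are semitopological semigroups. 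I expect the only point requiring genuine care to be this last one: one must argue at the level of the common ambient semigroup $(\alpha X)^{\alpha X}$ to conclude that the multiplications agree, rather than merely that the underlying compacta are homeomorphic.
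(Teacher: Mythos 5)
Your proposal is correct, and its first half coincides with the paper's own argument: the paper simply states that ``the same reasonings as in the proof of Theorem~\ref{Roelcke precomp4-1} work'' for an ultratransitive $G$, and you usefully make explicit the only two places where anything beyond ultratransitivity of ${\rm S}(X)$ could have been used --- the computation $\st_{\theta}x=X\setminus\theta$ for $x\notin\theta$ in item (a), and the production of the correcting element $f\in\st_{\theta}$ in the verification of condition $(\star\star)$ for item (b) --- both of which are indeed supplied by strong $n$-transitivity inside $G$. Where you genuinely diverge is the identification of the two Roelcke compactifications. The paper argues at the level of uniform spaces: $G$ is dense in $({\rm S}(X),\tau_{\partial})$ by Remark~\ref{remultr}(1), the Roelcke uniformity of a dense subgroup is the restriction of that of the ambient group~{\rm\cite[Proposition 3.24]{RD}}, and a dense uniform subspace has a completion uniformly equivalent to that of the whole space~{\rm\cite[Corollary 8.3.11]{Engelking}}, so the extension of the identity map gives a uniform isomorphism of $b_r(G,\tau_{\partial})$ onto $b_r{\rm S}(X)$. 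You instead realize both compactifications concretely inside the right topological semigroup $(\alpha X)^{\alpha X}$, as $\cl~\tilde\imath(G)$ and $\cl~\tilde\imath({\rm S}(X))$, and note that density of $G$ together with $\tilde\imath$ being a topological embedding forces these closures to be literally equal. Your route buys something the paper leaves implicit: after the paper's uniform isomorphism one must still check that the homeomorphism extending the identity intertwines the two Ellis multiplications (an argument in the style of Proposition~\ref{isomorph} is needed), whereas in your picture the two compactifications are the same subset of $(\alpha X)^{\alpha X}$ carrying one and the same composition, so multiplicativity and the semitopological property (via item (d) of Theorem~\ref{Roelcke precomp4-1}) come for free. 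The corresponding price is that your identification step relies on having already proved item (b) for $G$, so that $b_r(G,\tau_{\partial})=\cl~\tilde\imath(G)$, while the paper's uniform-space argument identifies the two compacta using only density and Roelcke precompactness, independently of the Ellis machinery.
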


\begin{proof}
The same reasonings as in the proof of Theorem~\ref{Roelcke precomp4-1} work in the case of ultratransitive action to prove analogs of items (a) and (b). 

$(G, \tau_{\partial})$ is a dense subgroup of $({\rm S}(X), \tau_{\partial})$ by item (1) of Remark~\ref{remultr} and is Roelcke precompact~\cite[Proposition 3.24]{RD}. Moreover, these Roelcke compacifications of $(G, \tau_{\partial})$ and  $({\rm S}(X), \tau_{\partial})$ are uniformly isomorphic (use~\cite[Proposition 3.24]{RD} and~\cite[Corollary 8.3.11]{Engelking}) by the extension of the identity map on $(G, \tau_{\partial})$. Hence,  $b_r (G, \tau_{\partial})$ is a semitopological semigroup topologically isomorphic to  $b_r S(X)$.
\end{proof}


\subsubsection{The maps $\mathfrak{E}_\alpha$ and  $\tilde{\mathfrak{E}}_\alpha$ for an ultratransitive action on a chain}\label{chain}

For an infinite {\it chain} ({\it linearly ordered set}) $X$ let $\aut(X)$ be the group of {\it automorphisms} (order-preserving bijections) of  $X$. A chain $X$ is called {\it homogeneous} if the action $\aut(X)\curvearrowright X$ is transitive {\rm(}for any $x, y\in X$ there exists $f\in\aut(X)$ such that $f(x)=y${\rm)}. 
A chain $X$ is called {\rm 2}-{\it homogeneous} if for any pairs of points  $x<y$ and $x'<y'$ there exists $g\in\aut(X)$ such that $g(x)=x'$, $g(y)=y'$. 

\begin{rem}{\rm 
(1) A 2-homogeneous chain is {\it dense}~\cite[Ch.\ 6, \S\ 2]{KM} (and, hence, has no {\it jumps}). 

(2) A 2-homogeneous chain is {\it ultrahomogeneous}~{\rm\cite[Lemma 1.10.1]{Glass} (see also~\cite{Ovch})}, i.e. for any families of different $n$ points $x_1<\ldots< x_n$ and   $y_1<\ldots< y_n$ there exists $g\in\aut(X)$ such that $g(x_k)=y_k$, $k=1,\ldots, n$, $n\in\mathbb N$.

(3) A chain without {\it proper gaps} is called {\it continuous}~{\rm\cite[Ch.\ 6, \S\ 2]{KM}}. If a chain $X$ is ultrahomogeneous and has a proper gap, then there is a gap on every nonempty interval of $X$ ($J\subset X$ is an {\it interval} if $x<y\in J\Longrightarrow z\in J\ \forall\ x<z<y$).

(4) On the group $\aut(X)$, the permutation topology $\tau_{\partial}$ is the smallest admissible group topology for the action  $\aut(X)\curvearrowright X$,  where $X$ is a discrete space. $(\aut(X), \tau_{\partial})$ is a subgroup $({\rm S}(X), \tau_{\partial})$. }
\end{rem}

A chain $X$  in the discrete topology is a GO-space ({\it generalized ordered space}). There is the smallest LOTS 
$$X\otimes_{\ell}\{-1, 0, 1\}\ (\mbox{topology induced by the lexicographic order on}\ X\times\{-1, 0, 1\}),$$ 
in which $X=X\times\{0\}$ is a dense subspace~\cite{Miwa}, and $X\otimes_{\ell}\{-1, 0, 1\}$ is naturally embedded in any other linearly ordered extension of $X$, in which $X$ is dense. Hence, any linearly ordered compactification of $X$ is a linearly ordered compactification of $X\otimes_{\ell}\{-1, 0, 1\}$. 
According to the description in~\cite{Fed}, the smallest linearly ordered compactification~\cite{Kauf} $b_m X$ of $X\otimes_{\ell}\{-1, 0, 1\}$ (and, hence, of $X$) is generated by replacing each gap (also improper) by a point with a natural continuation of the order (gaps in $X\otimes_{\ell}\{-1, 0, 1\}$ and  $X$ are naturally identified). Therefore, $b_m X=X^+\cup X\cup X^-\cup\Gamma$, where $X^+=X\times\{1\}$, $X^0=X\times\{0\}$, $X^-=X\times\{-1\}$, $\Gamma$ is the set of gaps.

\begin{rem} 
{\rm There is the unique linearly ordered compactifications of $X$, when $X$ is a continuous chain. It is obtained by addition  points $\sup$ and $\inf$ to $X\otimes_{\ell}\{-1, 0, 1\}$.}
\end{rem}

\begin{thm}\label{Roelcke precomp4-2-1}  Let $X$ be an  ultrahomogeneous chain and a discrete space, $G=(\aut(X), \tau_{\partial})$,  $\mathcal U_X$ is the maximal equiuniformity on $X$ for the action $\alpha:G\times X\to X$. Then 
\begin{itemize}
\item[{\rm (a)}]  $\mathcal U_X\in\mathbb{BU}(X)$ and $\tilde X=b_m  X$, where $(\tilde X, \tilde{\mathcal U}_X)$ is the completion of $(X, \mathcal U_X)$,
\item[{\rm (b)}]   $\mathfrak{E}_{\alpha}(\mathcal U_X)=L\wedge R=R_{\Sigma_X}\subset R_{\Sigma_{b_m X}}$  {\rm(}and, hence, $G$ is Roelcke precompact{\rm)}, 
\item[{\rm (c)}]  $L\wedge R\subset\widetilde{\mathfrak{E}}_{\alpha}(\mathcal U_X)= R_{\Sigma_{b_m X}}$  {\rm(}and, hence,  $\widetilde{{\mathfrak E}}_{\star}(L\wedge R)=\widetilde{{\mathfrak E}}_{\alpha}(\mathcal U_X)$ and $L\wedge R\not\in\mathbb{E}(G)$ if $R_{\Sigma_X}\ne R_{\Sigma_{b_m X}}${\rm)}, 
\item[{\rm (d)}]  the proper Ellis semigroup compactification $b G$ {\rm (}the completion of $(G,  \widetilde{{\mathfrak E}}_{\star}(L\wedge R)=R_{\Sigma_{b_m X}})${\rm )}  is the set of selfmaps $f$ of $b_m X$  in the t.p.c.  such that  
\begin{itemize}
\item[{\rm (i)}] $f$ is monotone {\rm (}if $x<y$, then $f(x)\leq f(y)${\rm)}, 
\item[{\rm (ii)}] $f(x)\not\in X$ if $x\in b_m X\setminus X$, 
\item[{\rm (iii)}] if $f(x)=f(y)$,  $x\ne y$, then $f(x)\in b_m X\setminus X$, 
\item[{\rm (iv)}] either $f((x, -1))=f((x, 0))=f((x, 1))\not\in X$, or $f((x, -1))=(y, -1)$, $f((x, 0))=(y, 0)$,  $f((x, 1))=(y, 1)$  and 
\item[{\rm (v)}] $f(\inf)=\inf$,  $f(\sup)=\sup$, 
\end{itemize}
\item[{\rm (e)}]  the Roelcke compactification $b_r G$ is  the set of maps $f$ of $X$ to $b_m X$  in the t.p.c.  such that  {\rm (i')} $f$ is monotone,  {\rm (ii')} if $f(x)=f(y)$,  $x\ne y$, then $f(x)\in b_m X\setminus X$. 
\end{itemize}
If $X$ is continuous, then 
\begin{itemize}
\item[{\rm (b')}]  $\widetilde{\mathfrak{E}}_{\alpha}(\mathcal U_X)=\mathfrak{E}_{\alpha}(\mathcal U_X)=L\wedge R=R_{\Sigma_X}=R_{\Sigma_{b_m X}}$, 
\item[{\rm (c')}] $b_r G=b G$ is a proper enveloping Ellis semigroup compactification.
\end{itemize}
\end{thm}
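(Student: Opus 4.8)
The plan is to reduce everything to a single combinatorial verification, exactly as in the proof of Theorem~\ref{Roelcke precomp4-1}, but now carried out with the linear order kept intact. First, for (a), I would compute the base of the maximal equiuniformity $\mathcal{U}_X$ explicitly. By ultrahomogeneity the orbit $\st_\theta x$ of a point $x$ under the stabilizer of a finite $\theta\in\Sigma_X$ is the maximal interval of $X\setminus\theta$ containing $x$ (a singleton when $x\in\theta$); hence the base cover $u_\theta$ is the finite partition of $X$ into the points of $\theta$ and the open intervals they determine. Finiteness of each $u_\theta$ gives total boundedness, so $\mathcal{U}_X\in\mathbb{BU}(X)$. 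Passing to Cauchy filters, the points added in the completion are precisely the two one-sided limits $(x,-1),(x,1)$ of each $x\in X$, the proper gaps $\Gamma$, and the two endpoints; this is the description of $b_m X$ recalled before the theorem, so $\tilde X=b_m X$, and I note for later use that $X=X^0$ is exactly the set of isolated points of the LOTS $b_m X$.

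The heart of the argument is the verification of the condition $(\star\star)$ of Proposition~\ref{a2-2} for the extended action $(G,(b_m X,\tilde{\mathcal U}_X),\tilde\alpha)$. Writing it in the cover language used in the proof of Theorem~\ref{Roelcke precomp4-1}(b), I must show: given finite $\sigma\subset\theta\subset b_m X$, if $\tilde\alpha(g,x)$ and $\tilde\alpha(h,x)$ lie in a common piece of $u_\theta$ for every $x\in\sigma$, then there is $g'\in g\st_\sigma$ with $h(g')^{-1}\in\st_\theta$. Since $g,h$ are order-preserving and preserve the type of each point (point of $X$, one-sided limit, or gap), the finite tuples $(\tilde\alpha(g,x))_{x\in\sigma}$ and $(\tilde\alpha(h,x))_{x\in\sigma}$ share the same order-type and the same $\theta$-interval memberships; the required correcting automorphism $f=h(g')^{-1}\in\st_\theta$ is then built interval by interval, using ultrahomogeneity of $X$ together with the density of gaps in every interval (the Remark following item (3)) to realize the prescribed finite partial order-isomorphism, including when the relevant values are gaps or one-sided limits. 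This order-respecting construction is the main obstacle, and is exactly where the chain case departs from the permutation case (there the single added point was fixed by all of $G$). Once $(\star\star)$ holds on $b_m X$, Corollary~\ref{cora2-2}(2),(3) yields $\mathfrak{E}_\alpha(\mathcal U_X)=R_{\Sigma_X}$ and $\widetilde{\mathfrak{E}}_\alpha(\mathcal U_X)=R_{\Sigma_{b_m X}}$; with Lemma~\ref{coralpha} ($L\wedge R=R_{\Sigma_X}$) and Corollary~\ref{cora2-2}(1) this gives all of (b) and the equality in (c). For the remaining assertions of (c), $L\wedge R=\mathfrak{E}_\alpha(\mathcal U_X)\subset\widetilde{\mathfrak{E}}_\alpha(\mathcal U_X)$ places us in the hypotheses of Theorem~\ref{autultr}(2),(3), giving $\widetilde{\mathfrak{E}}_{\star}(L\wedge R)=\widetilde{\mathfrak{E}}_\alpha(\mathcal U_X)$, and Theorem~\ref{mathfrak}(d) ($\mathcal U\in\mathbb E(G)$ iff $\widetilde{\mathfrak{E}}_{\star}(\mathcal U)=\mathcal U$) then shows $L\wedge R\notin\mathbb E(G)$ whenever $R_{\Sigma_X}\neq R_{\Sigma_{b_m X}}$.

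For (d) the compactification $bG$ is, by Theorem~\ref{diagactalgcont}(III), the closure $\cl\,\tilde\imath(G)$ in $(b_m X)^{b_m X}$. Necessity of (i)--(v) I would obtain by passing to pointwise limits of extended automorphisms: monotonicity (i) and fixing the endpoints (v) are preserved under such limits; (ii) follows because $X=X^0$ is the isolated-point set of $b_m X$ while every point of $b_m X\setminus X$ is a one-sided or two-sided limit of a monotone net, so a monotone limit of automorphisms cannot carry a non-isolated point to an isolated one; (iii) is the order-analogue of the injectivity argument in Theorem~\ref{Roelcke precomp4-1}(c) (a collapse $f(x)=f(y)$ with $f(x)\in X$ would be separated off by a t.p.c.\ neighbourhood missing $\tilde\imath(G)$); and (iv) records that an automorphism moves a triple $(x,-1)<(x,0)<(x,1)$ to a triple, a property stable in the limit. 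For sufficiency I would show any $f$ satisfying (i)--(v) is approximated on each finite $\sigma\subset b_m X$ by a genuine element of $\aut(X)$, which is the same finite realization problem solved in the $(\star\star)$ step (choose $X$-points inside the prescribed interval neighbourhoods, respecting order and gap-density, and extend by ultrahomogeneity). Part (e) is then read off by projecting $bG$ to $(b_m X)^X$: $b_r G=\cl\,\imath(G)$ is the completion of $(G,\mathfrak{E}_\alpha(\mathcal U_X))=(G,L\wedge R)$, and (i'),(ii') are the restrictions of (i)--(iii) to the coordinates in $X$.

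Finally, for (b'),(c'), continuity of $X$ means $\Gamma=\emptyset$, so $b_m X=X^-\cup X^0\cup X^+\cup\{\inf,\sup\}$. Since an order-automorphism fixing $x\in X$ also fixes $(x,-1)$ and $(x,1)$, and every automorphism fixes the two endpoints, each $\st_y$ with $y\in b_m X$ equals some $\st_x$ with $x\in X$ (or $G$ for the endpoints); hence the family of finite-intersection stabilizers on $b_m X$ coincides with that on $X$, so $R_{\Sigma_{b_m X}}=R_{\Sigma_X}$. Feeding this into (b),(c) collapses all the equiuniformities to $L\wedge R$, giving $\mathfrak{E}_\alpha(\mathcal U_X)=\widetilde{\mathfrak{E}}_\alpha(\mathcal U_X)=L\wedge R=R_{\Sigma_X}=R_{\Sigma_{b_m X}}\in\mathbb E(G)$, whence $b_r G=bG$ is a proper enveloping Ellis semigroup compactification.
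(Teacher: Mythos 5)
Your overall architecture is the same as the paper's: part (a) via the explicit covers $u_\theta$; parts (b),(c) reduced to the verification of condition $(\star\star)$ for the extended action on $b_m X$ and then fed through Proposition~\ref{a2-2}, Corollary~\ref{cora2-2}, Lemma~\ref{coralpha}, Theorem~\ref{autultr} and Theorem~\ref{mathfrak}; parts (d),(e) as closure descriptions in $(b_m X)^{b_m X}$ and $(b_m X)^{X}$; and (b'),(c') from the stabilizer identities $\st_{(x,\pm 1)}=\st_{(x,0)}$, $\st_{\inf}=\st_{\sup}=G$. Your shortcut for (b') --- deducing $R_{\Sigma_{b_m X}}=R_{\Sigma_X}$ from the stabilizers and feeding it back into (b),(c), instead of re-verifying condition $(\star)$ of Proposition~\ref{order} as the paper does --- is legitimate and slightly cleaner.

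However, there is a genuine gap at precisely the step you call the heart of the argument: the verification of $(\star\star)$ for $(G,(b_m X,\tilde{\mathcal U}_X),\tilde\alpha)$. You propose to produce the correcting automorphism $f=h(g')^{-1}\in\st_\theta$ by one application of ultrahomogeneity, ``realizing the prescribed finite partial order-isomorphism, including when the relevant values are gaps or one-sided limits,'' with the density of gaps as an aid. Ultrahomogeneity only yields automorphisms with prescribed values on finite tuples of \emph{points of $X$}; it gives no control whatsoever over where the extension $\tilde f$ sends a gap. But when $\sigma\in\Sigma_{b_m X}$ contains gaps, $\sigma=\sigma_X\cup\sigma_\Gamma$ with $\sigma_\Gamma\neq\emptyset$, the requirement $g'\in g\st_\sigma$ forces the \emph{exact} equalities $\tilde{g}'(\xi)=\tilde g(\xi)$ for every gap $\xi\in\sigma_\Gamma$; equivalently, $f$ must carry the gap $\tilde g(\xi)$ exactly onto the gap $\tilde h(\xi)$. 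No finite-tuple application of ultrahomogeneity can arrange this (the automorphism it produces is unconstrained at gaps, and in general an automorphism taking one prescribed gap to another need not even exist, since e.g.\ the cofinality character of a gap on each side is preserved); density of gaps does not help and is in fact never used in the paper's proof. What is actually needed, and what the paper's long interval-by-interval construction provides, is a \emph{gluing} mechanism: $g'$ is made to coincide with $g$ on a two-sided neighbourhood of each point of $\sigma$ (agreement on a neighbourhood is what forces $\tilde g'(\xi)=\tilde g(\xi)$ at a gap), to coincide with $h$ on the bulk of each complementary interval (this yields the global cell-tracking condition $h(g')^{-1}\in\st_\theta$ far from $\sigma$), and to interpolate between $g$ and $h$ on transition intervals placed inside a single cell of $\omega_\theta$, using continuity of $g,h$ and ultrahomogeneity applied only to points of $X^0$. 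Without this mechanism your argument establishes $(\star\star)$ only for $\sigma\subset X$, i.e.\ only $\mathfrak{E}_{\alpha}(\mathcal U_X)=R_{\Sigma_X}$ in (b); the key equality $\widetilde{\mathfrak{E}}_{\alpha}(\mathcal U_X)=R_{\Sigma_{b_m X}}$ of (c) --- and with it (d) and the claim $L\wedge R\notin\mathbb{E}(G)$ when $R_{\Sigma_X}\neq R_{\Sigma_{b_m X}}$ --- remains unproved. (The same omission resurfaces in your sufficiency argument for (d), though there only approximation at finitely many coordinates is required, so the finite realization idea can be repaired along the paper's lines.)
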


\begin{proof}  (a)  Firstly, the base of the maximal equiuniformity $\mathcal U_X$  for the action  $(G, \tau_{\partial})\curvearrowright X$  is formed by the finite   covers (of disjoint sets)
$$u_{\theta}=\{\st_{\theta}x\ |\ x\in X\}=(\gets, x_1)\cup\{x_1\}\cup (x_1, x_2)\cup\{x_2\}\cup\ldots\cup(x_{n-1}, x_n)\cup\{x_n\}\cup (x_n, \to),\ \theta\in\Sigma_X,$$ ($\theta=\{x_1,\ldots, x_n\}$, $x_1<\ldots<x_n\in X$,  $u_{\theta'}\succ u_{\theta}\ \mbox{if}\ \theta\subset\theta'$). Hence, $\mathcal U_X\in\mathbb{BU}(X)$.

Secondly, the smallest linearly ordered compactification $b_m X$ of $X$ is zero-dimensional.  In any open cover of a zero-dimensional compactum  $b_m X$ it is possible to refine a finite cover of pairwise distinct clopen intervals no none is a subset of the other. By introducing the order on the intervals ($[a_1, b_1]< [a_2, b_2]$ if $a_1< a_2$), we obtain a disjoint cover of the clopen intervals 
$J_1=(\gets, y_1),\ldots, J_k=(y_k, \to)$, subtracting sequentially from the $i$ interval the union of the previous ones. Let us correct the latter cover by removing endpoints of the form $(x, 0)$ from the intervals (if any), and adding them as single-point clopen intervals to the corrected interval system to obtain an open cover $\omega$ of the space $b_m X$. Then $\omega\wedge X$ (the trace of $\omega$ on $X$) obviously coincides with the cover from $\mathcal U_X$. Since any cover of $\mathcal U_X$ is extended to the cover of  $b_m X$ ($u_{\theta}$ is a trace of the cover $\omega_{\theta}=[\inf, (x_1, -1)]\cup\{(x_1, 0)\}\cup [(x_1, 1), (x_2, -1)]\cup\{(x_2, 0)\}\cup\ldots\cup[(x_{n-1}, 1), (x_n, -1)]\cup\{(x_n, 0)\}\cup [(x_n, 1), \sup]$), then  $\tilde X=b_m X$ and the action $\alpha: G\times X\to X$ is extended to the action $\tilde\alpha: G\times b_m X\to b_m X$ (for  $g\in G$ 
\begin{itemize}
\item[] $\tilde\alpha (g, (x, j))=(\alpha (g, x), j)$, $x\in X$, $j=-1, 0, 1$;
\item[] $\tilde\alpha (g, (A, B))=(\alpha (g, A), \alpha (g, B))$, $(A, B)\in\Gamma$).
\end{itemize}
It is easy to check that the action  $\tilde\alpha$ is correctly defined, it is continuous, its restriction to $X=X^0$ coincides with $\alpha$ and $X^+$, $X^0$, $X^-$,  $\Gamma$ are invariant subsets. If $\Omega_{\theta}$ is an entourage correspondent to the cover  $\omega_{\theta}$, then  $\Omega_{\theta}\circ\Omega_{\theta}=\Omega_{\theta}$. 

\medskip

The equality $L\wedge R=R_{\Sigma_X}$ and the inclusion $R_{\Sigma_X}\subset R_{\Sigma_{b_m X}}$ in (b) follow from Lemma~\ref{coralpha} and  Theorem~\ref{autultr} respectively. 

Proof of the equality $\widetilde{\mathfrak{E}}_{\alpha}(\mathcal U_X)= R_{\Sigma_{b_m X}}$ in (c). By Proposition~\ref{lemincl} $\widetilde{\mathfrak{E}}_{\alpha}(\mathcal U_X)\subset R_{\Sigma_{b_m X}}$. 

Since $\st_{(x, j)}$, $j=-1, 0, 1$, for the action $\tilde\alpha$ coincides with $\st_x$  for the action $\alpha$ and $\st_{\inf}=\st_{\sup}=G$, one can take arbitrary $\sigma\in\Sigma_{b_m X}$, where $\sigma=\sigma_X\cup\sigma_{\Gamma}$, $\sigma_X\in\Sigma_{X^0}(=\Sigma_{X})$, $\sigma_{\Gamma}\in\Sigma_{\Gamma}\setminus\{\inf, \sup\}$. 

In the case of the equiuniformity  $\tilde{\mathcal U}_X$  the condition $(\star\star)$ in Proposition~\ref{a2-2}  can be reformulated in the following form
$$\begin{array}{c}
\mbox{\rm if for}\ g, h\in G\ \ (\tilde\alpha(h, x), \tilde\alpha(g, x))\in\Omega_{\theta}\ \mbox{\rm holds for}\ x\in\sigma=\sigma_X\cup\sigma_{\Gamma}, \\
\mbox{\rm then there exists}\ \ g'\in g\st_{\sigma}\ \ \mbox{\rm such that}\ (\tilde\alpha(h, x), \tilde\alpha(g', x))\in\Omega_{\theta}\  \mbox{\rm holds for}\ x\in b_m X,\\ \mbox{where, without loss of generality,}\ \sigma_X\subset\theta\in\Sigma_{X}.\\ 
\end{array}$$

Let $\sigma=\{y_1,\ldots, y_n\}$ and $y_1<y_2<\ldots<y_n$. The construction of the automorphism $g'$ is carried out on the intervals $[\inf, y_1], [y_1, y_2],\ldots, [y_{n-1}, y_n], [y_n, \sup]$.  

(I) The map $g'$ on the interval $[\inf, y_1]$. If $(\inf, \tilde\alpha (h, y_1))\in\Omega_{\theta}$ or $\tilde\alpha (h, y_1)\in\theta$ (and hence,  $\tilde\alpha (g, y_1)=\tilde\alpha (h, y_1)$, $y_1=(x_1, 0)$, $\tilde\alpha (g, (x_1, -1))=\tilde\alpha (h, (x_1, -1))$) and  $(\inf, \tilde\alpha (h, (x_1, -1)))\in\Omega_{\theta}$, then 
$$\tilde\alpha (g', y)=\tilde\alpha (g, y)\ \mbox{for}\ y\in [\inf, y_1].$$

Otherwise, in the case $\tilde\alpha (g, y_1)\leq\tilde\alpha (h, y_1)$  if $\tilde\alpha (g, y_1)\not\in\theta$, then take  $a_1<\tilde\alpha (g, y_1)$ such that $a_1\in X^0$,  $(a_1, \tilde\alpha (g, y_1))\in\Omega_{\theta}$  (and hence, $(a_1, \tilde\alpha (h, y_1))\in\Omega_{\theta}$). If $\tilde\alpha (g, y_1)\in\theta$, then $y_1=(x_1, 0)$, $\tilde\alpha (g, y_1)=\tilde\alpha (h, y_1)$ and  $\tilde\alpha (g, (x_1, -1))=\tilde\alpha (h, (x_1, -1))$.  Take  $a_1<\tilde\alpha (g, (x_1, -1))$ such that $a_1\in X^0$,  $(a_1, \tilde\alpha (g, (x_1, -1)))\in\Omega_{\theta}$   (and hence,  $(a_1, \tilde\alpha (h, (x_1, -1)))\in\Omega_{\theta}$). Put $t=y_1$ if $\tilde\alpha (g, y_1)\not\in\theta$ and $t=(x_1, -1)$ if $\tilde\alpha (g, y_1)\in\theta$.

We assume that $y_1^-=\alpha (h^{-1}, a_1)$ and take $y_1^-<z_1^-<t$, $z_1^-\in X^0$, $a_1<\alpha (g, z_1^-)$ (using continuity of $g$ and the inequality $a_1<\tilde\alpha (g, t)$). Since $X$ is ultrahomogeneous, there exists $\varphi_1\in G$ such that $\alpha (\varphi_1, y_1^-)=a_1=\alpha (h, y_1^-)<\alpha (\varphi_1, z_1^-)=\alpha (g, z_1^-)$.

In the case $\tilde\alpha (g, y_1)>\tilde\alpha (h, y_1)$ take  $a_1<\tilde\alpha (h, y_1)$ such that $a_1\in X^0$,  $(a_1, \tilde\alpha (h, y_1))\in\Omega_{\theta}$  (and hence, $(a_1, \tilde\alpha (g, y_1))\in\Omega_{\theta}$). We assume that $y_1^-=\alpha (h^{-1}, a_1)$ and take $y_1^-<z_1^-<y_1$, $z_1^-\in X^0$, $a_1<\alpha (g, z_1^-)$ (using continuity of $g$ and the inequality $a_1<\tilde\alpha (g, y_1)$). Since $X$ is ultrahomogeneous, there exists $\varphi_1\in G$ such that $\alpha (\varphi_1, y_1^-)=a_1=\alpha (h, y_1^-)<\alpha (\varphi_1, z_1^-)=\alpha (g, z_1^-)$. Put 
$$\tilde\alpha (g', y)=\left\{\begin{array}{lcl}
\tilde\alpha (h, y) & \mbox{if} & y\in [\inf, y_1^-], \\
\tilde\alpha (\varphi_1, y) & \mbox{if} & y\in [y_1^-, z_1^-],\\
\tilde\alpha (g, y) & \mbox{if} & y\in [z_1^-, y_1].\\
\end{array}\right.$$

On the interval $[y_n, \sup]$ the construction is similar. 

\medskip

(II)  The map $g'$ on on the interval $[y_k, y_{k+1}]$, $k=1, \dots, n-1$. If 
$$(\tilde\alpha (h, y_k), \tilde\alpha (h, y_{k+1}))\in\Omega_{\theta},\ \mbox{or}$$  
$\tilde\alpha (h, y_k)\in\theta$, $\tilde\alpha (h, y_{k+1})\in\theta$ (and hence,  $\tilde\alpha (g, y_k)\in\theta$, $\tilde\alpha (g, y_{k+1})\in\theta$, $y_k=(x_k, 0)$, $y_{k+1}=(x_{k+1}, 0)$,  $\tilde\alpha (h, (x_k, 1))=\tilde\alpha (g, (x_k, 1))$, $\tilde\alpha (h, (x_{k+1}, -1))=\tilde\alpha (g, (x_{k+1}, -1))$) and 
$(\tilde\alpha (h, (x_k, 1)), \tilde\alpha (h, (x_{k+1}, -1)))\in\Omega_{\theta}$, or 

\noindent $\tilde\alpha (h, y_k)\in\theta$, $\tilde\alpha (h, y_{k+1})\not\in\theta$ and $(\tilde\alpha (h, (x_k, 1)), \tilde\alpha (h, y_{k+1}))\in\Omega_{\theta}$, or 

\noindent $\tilde\alpha (h, y_k)\not\in\theta$, $\tilde\alpha (h, y_{k+1})\in\theta$ and $(\tilde\alpha (h, y_k), \tilde\alpha (h, (x_{k+1}, -1)))\in\Omega_{\theta}$, then
$$\tilde\alpha (g', y)=\tilde\alpha (g, y)\ \mbox{for}\ y\in [y_k, y_{k+1}].$$  

Otherwise, let us examine the case $\tilde\alpha (h, y_k)\geq\tilde\alpha (g, y_k)$,  $\tilde\alpha (h, y_{k+1})\geq\tilde\alpha (g, y_{k+1})$. 

(a) If $\tilde\alpha (h, y_k), \tilde\alpha (h, y_{k+1})\not\in\theta$ (and, hence, $\tilde\alpha (g, y_k), \tilde\alpha (g, y_{k+1})\not\in\theta$), then take $b_k>\tilde\alpha (h, y_k)$, $b_k\in X^0$, $(b_k, \tilde\alpha (h, y_k))\in\Omega_{\theta}$ and $a_{k+1}<\tilde\alpha (g, y_{k+1})$, $a_{k+1}\in X^0$, $(a_{k+1}, \tilde\alpha (g, y_{k+1}))\in\Omega_{\theta}$. $b_k<a_{k+1}$.  Put $t=y_k$ and $\tau=y_{k+1}$. 

(b) If  $\tilde\alpha (h, y_k)$, $\tilde\alpha (h, y_{k+1})\in\theta$,  then take $b_k>\tilde\alpha (h, (x_k, 1))$, $b_k\in X^0$, $(b_k, \tilde\alpha (h, (x_k, 1)))\in\Omega_{\theta}$ and $a_{k+1}<\tilde\alpha (g, (x_{k+1}, -1))$, $a_{k+1}\in X^0$, $(a_{k+1}, \tilde\alpha (g, (x_{k+1}, -1)))\in\Omega_{\theta}$. $b_k<a_{k+1}$.  Put $t=(x_k, 1)$ and $\tau=(x_{k+1}, -1)$. 

(c) If  $\tilde\alpha (h, y_k)\in\theta$, $\tilde\alpha (h, y_{k+1})\not\in\theta$, then take $b_k>\tilde\alpha (h, (x_k, 1))$, $b_k\in X^0$, $(b_k, \tilde\alpha (h, (x_k, 1)))\in\Omega_{\theta}$ and $a_{k+1}<\tilde\alpha (g, y_{k+1})$, $a_{k+1}\in X^0$, $(a_{k+1}, \tilde\alpha (g, y_{k+1}))\in\Omega_{\theta}$. $b_k<a_{k+1}$.  Put $t=(x_k, 1)$ and $\tau=y_{k+1}$. 

(d) If  $\tilde\alpha (h, y_k)\not\in\theta$, $\tilde\alpha (h, y_{k+1})\in\theta$,  take $b_k>\tilde\alpha (h, y_k)$, $b_k\in X^0$, $(b_k, \tilde\alpha (h, y_k))\in\Omega_{\theta}$ and $a_{k+1}<\tilde\alpha (g, (x_{k+1}, -1))$, $a_{k+1}\in X^0$, $(a_{k+1}, \tilde\alpha (g, (x_{k+1}, -1)))\in\Omega_{\theta}$. $b_k<a_{k+1}$.  Put $t=y_k$ and $\tau=(x_{k+1}, -1)$. 

We assume that $y_k^+=\alpha (h^{-1}, b_k)$, $y_{k+1}^-=\alpha (h^{-1}, a_{k+1})$. From the continuity of $g$ and the inequality $\tilde\alpha (g, t)<b_k$ there exists $t<z^+_k<y_k^+$, $z^+_k\in X^0$, such that $\alpha (g, z_k^+)<b_k$, and from the inequality $a_{k+1}<\tilde\alpha (g, \tau)$ there exists $y_{k+1}^-<z^-_{k+1}<\tau$, $z^-_{k+1}\in X^0$, such that $a_{k+1}<\alpha (g, z_{k+1}^-)$. Since $X$ is ultrahomogeneous, there exist $\varphi_k^-\in G$ such that $\alpha (\varphi_k^-, z^+_k)=\alpha (g, z_k^+)$, $\alpha (\varphi_k^-, y_k^+)=b_k=\alpha (h, y_k^+)$ and 
$\varphi_k^+\in G$ such that $\alpha (\varphi_k^+, y_{k+1}^-)=a_{k+1}=\alpha (h, y_{k+1}^-)<\alpha (\varphi_k^+, z^-_{k+1})=\alpha (g, z_{k+1}^-)$. 
Put
$$g'(x)=\left\{\begin{array}{lcl}
\tilde\alpha (g, y) & \mbox{if} & y\in [y_k, z_k^+], \\
\tilde\alpha (\varphi_k^-, y) & \mbox{if} & y\in [z_k^+, y_k^+], \\
\tilde\alpha (h, y) & \mbox{if} & y\in [y_k^+, y_{k+1}^-], \\
\tilde\alpha (\varphi_k^+, y) & \mbox{if} & x\in [y_{k+1}^-, z_{k+1}^-], \\
\tilde\alpha (g, y) & \mbox{if} & y\in [z_{k+1}^-, y_{k+1}]. \\
\end{array}\right.$$ 

In the cases $\tilde\alpha (h, y_k)\leq\tilde\alpha (g, y_k)$,  $\tilde\alpha (h, y_{k+1})\leq\tilde\alpha (g, y_{k+1})$,  $\tilde\alpha (h, y_k)\geq\tilde\alpha (g, y_k)$,  $\tilde\alpha (h, y_{k+1})\leq\tilde\alpha (g, y_{k+1})$ and  $\tilde\alpha (h, y_k)\leq\tilde\alpha (g, y_k)$, $\tilde\alpha (h, y_{k+1})\geq\tilde\alpha (g, y_{k+1})$  considerations are similar. 

\medskip

It follows from the construction and the equality $\Omega_{\sigma}\circ\Omega_{\sigma}=\Omega_{\sigma}$ that $g'\in G$, $g'\in g\st_{\sigma}$ and $(\tilde\alpha (h, y), \tilde\alpha (g', y))\in\Omega_{\sigma}$ for any point $y\in b_m X$. The condition  $(\star\star)$ is valid for $(G, (b X, \tilde{\mathcal U}_X), \tilde\alpha)$. Hence, $\widetilde{\mathfrak{E}}_{\alpha}(\mathcal U_X)= R_{\Sigma_{b_m X}}$.

By Corollary~\ref{cora2-2} the condition  $(\star\star)$ is valid for $(G, (X, \mathcal U_X), \alpha)$ and   $\mathfrak{E}_{\alpha}(\mathcal U_X)=R_{\Sigma_X}$. Hence, all equalities and inclusions in (b) and (c) are proved.

\medskip

From item (4) of Theorem~\ref{autultr} it follows that  $\widetilde{{\mathfrak E}}_{\star}(L\wedge R)=\widetilde{{\mathfrak E}}_{\alpha}(\mathcal U_X)$. 

If $R_{\Sigma_X}\ne R_{\Sigma_{b_m X}}$, then  $\mathfrak{E}_{\alpha}(\mathcal U_X)\ne\widetilde{\mathfrak{E}}_{\alpha}(\mathcal U_X)$, $\mathfrak{E}_{\alpha}(\mathcal U_X)=L\wedge R\ne \widetilde{{\mathfrak E}}_{\star}(L\wedge R)=\widetilde{{\mathfrak E}}_{\alpha}(\mathcal U_X)$. Hence,  by item (e) of Theorem~\ref{mathfrak} $L\wedge R\not\in\mathbb{E}(G)$ and (c) is proved.

\medskip 

(d) Fulfillment of conditions (i)--(v) for $f\in\cl~G$. 

(i) Let $x<y$ and $f(x)>f(y)$. Take disjoint nbds $Of(x)$, $Of(y)$ of $f(x)$ and $f(y)$ respectively, which are intervals. Then $O_f=[x,  Of(x)]\cap [y,  Of(y)]$ is a nbd of $f$, but $G\cap O_f=\emptyset$. Hence, $f\not\in\cl~G$. 

(ii) Let $x\in b_m X\setminus X$ and $f(x)\in X$. Then  $O_f=[x, \{f(x)\}]$ is a nbd of $f$,  but $G\cap O_f=\emptyset$. Hence, $f\not\in\cl~G$. 

(iii) Let $f(x)=f(y)$, $x\ne y$, and $f(x)\in X$. Then  $O_f=[x,  \{f(x)\}]\cap [y,  \{f(y)\}]$ is a nbd of $f$, but $G\cap O_f=\emptyset$.  Hence, $f\not\in\cl~G$.

(iv) If $f((x, -1))=f((x, 0))<f((x, 1))$, then $f((x, 0))$, $f((x, 1))\in b_m X\setminus X$ by (ii) and  then there exists $f((x, 0))< z<f((x, 1))$. For the disjoint nbds $U=[\inf, z)$ and $V=(z, \sup]$ of $f((x, 0))$ and $f((x, 1))$ respectively,  $O_f=[(x, 0),  U]\cap [(x, 1),  V]$ is a nbd of $f$, but $G\cap O_f=\emptyset$. 

This argument works in the case $f((x, -1))<f((x, 0))=f((x, 1))$. Therefore, either $f((x, -1))=f((x, 0))=f((x, 1))\not\in X$ or $f((x, -1))<f((x, 0))<f((x, 1))$ and  there is no $z$ (by the above argument) such that $f((x, -1))<z<f((x, 0))$ or $f((x, 0)<z<f((x, 1))$.  Hence,  $f((x, -1))=(y, -1)$, $f((x, 0))=(y, 0)$,  $f((x, 1))=(y, 1)$. 

(v) Let $f(\inf)=x\ne\inf$. Take nbd $Ox$ of $x$, $\inf\not\in Ox$. Then $O_f=[\inf, Ox]$ is a nbd of $f$,  but  $G\cap O_f=\emptyset$. Hence, $f\not\in\cl~G$. The same argument works for $\sup$. 

\medskip

Let $f: b_m X\to b_m X$ satisfies conditions (i)--(v).  Take an arbitrary nbd $O_f=\bigcap\limits_{k=1}^n [x_k, Of(x_k)]$ of $f$ 
where, without lose of generality, $\inf<x_1<\ldots<x_n<\sup$, for any $x\in X$ only one point of the form $(x, j)$, $j=-1, 0, 1$, is in the set $\{x_1, \ldots, x_n\}$ (use (iv)), $Of(x_k)$, $k=1,\ldots, n$ are clopen intervals, $Of(x_k)=Of(x_{k+1})$ if $f(x_k)=f(x_{k+1})$ and  $Of(x_k)\cap Of(x_{k+1})=\emptyset$ if $f(x_k)<f(x_{k+1})$,  $k=1,\ldots, n-1$ (use condition (i)). Moreover, if $f(x_k)=(y_k, 0)$ (and hence, $x_k\in X^0$), then $Of(x_k)=\{(y_k, 0)\}$, if $f(x_k)=(y_k, 1)$, then $Of(x_k)=[(y_k, 1), (z_k, -1)]$,  if $f(x_k)=(y_k, -1)$, then $Of(x_k)=[(z_k, 1), (y_k, -1)]$.

In order to construct automorphism $g\in O_f$ we sequentially determine two finite increasing sequences of points $t_k\leq t'_k<t_{k+1}$ and $\tau_k\leq\tau'_k<\tau_{k+1}$ in $X$, $k=1,\ldots, n$. 

If $Of(x_k)=\{(y_k, 0)\}$, then $x_k=(t_k, 0)$ by (ii), $t'_k=t_k$, and $\tau_k=\tau'_k=y_k$.

Let $Of(x_k)=[(y_k, 1), (z_k, -1)]$. If 
\begin{itemize}
\item[{\rm ($\alpha$)}] $x_k=(r_k, -1)$ and $f(x_k)\ne (z_k, -1)$, or $f((r_k, 0))=(z_k, -1)$ ($f((r_k, -1))=f((r_k, 1))=(z_k, -1)$ by (iv)),  
\item[{\rm ($\beta$)}] $x_k=(r_k, 0)$, 
\item[{\rm ($\gamma$)}] $x_k=(r_k, 1)$ and $f(x_k)\ne (y_k, 1)$, or $f((r_k, 0))=(y_k, 1)$ ($f((r_k, -1))=f((r_k, 1))=(z_k, 1)$ by (iv)),  
\end{itemize}
then take any $(\tau_k, 0)\in [(y_k, 1), (z_k, -1)]$, $\tau'_{k-1}<\tau_k$ (if $k-1=0$ , then $\inf<(\tau_k, 0)$) and put $t_k=t'_k=r_k$, $\tau_k=\tau'_k$. 

If $x_k=(r_k, -1)$, $f(x_k)= (z_k, -1)$ and $f((r_k, 0))=(z_k, 0)$ (by (iv)), then put $t_k=t'_k=r_k$, $\tau_k=\tau'_k=z_k$. 

If $x_k=(r_k, 1)$, $f(x_k)=(y_k, 1)$ and $f((r_k, 0))=(y_k, 0)$  (by (iv)),  then put $t_k=t'_k=r_k$, $\tau_k=\tau'_k=y_k$. 

If  $x_k\in\Gamma$, then take any $(\tau_k, 0),\ (\tau'_k, 0)\in [(y_k, 1), (z_k, -1)]$,  $(\tau'_{k-1}, 0)<(\tau_k, 0)$ and any $t_k,\ t'_k\in X$ such that $(t'_{k-1}, 0)<(t_k, 0)<x_k<(t'_{k}, 0)<x_{k+1}$ (if $k-1=0$ , then $\inf<(t_k, 0)$; if $k=n$, then $(t'_{k}, 0)<\sup$).  

Since $X$ is ultrahomogeneous there exists  $g\in G$ that maps ordered sequence of $t_k,\ t'_k$ onto the ordered sequence of  $\tau_k,\ \tau'_k$. Evidently, $g\in O_f$ and $f\in\cl~G$.

\medskip

(e) Since $L\wedge R=R_{\Sigma_X}$, the Roelcke compactification $b_r G$  of $G$ is the closure of $\imath (G)$ in $(b_m X)^{X}$. The restriction of the  projection $\pr: (b_m X)^{b_m X}\to (b_m X)^{X}$ to $\cl~\tilde{\imath} (G)$ is a map of the proper Ellis semigroup compactification $b G$ onto $b_r G$. This gives the description of $b_r G$ as the set of maps $f$ of $X$ to $b_m X$  in the t.p.c.  such that  (a) $f$ is monotone,  (b) if $f(x)=f(y)$, then $f(x)\in b_m X\setminus X$.

\medskip

If $X$ is continuous, then (b') follows from the equalities 
$\widetilde{\mathfrak{E}}_{\alpha}(\mathcal U_X)=\mathfrak{E}_{\alpha}(\mathcal U_X)$ and $R_{\Sigma_X}=R_{\Sigma_{b_m X}}$. 

$R_{\Sigma_X}=R_{\Sigma_{b_m X}}$, because $\st_{\inf}={\rm St}_{\sup}=G$ and $\st_{(x, -1)}=\st_{(x, 1)}=\st_{(x, 0)}$ for $x\in X$.

Since $\tilde\alpha (g, \inf)=\inf$, $\tilde\alpha (g, \sup)=\sup$, $g\in G$, in order to prove $\widetilde{\mathfrak{E}}_{\alpha}(\mathcal U_X)=\mathfrak{E}_{\alpha}(\mathcal U_X)$ it is enough to check the fulfillment of the condition $(\star)$ of Proposition~\ref{order}. Let us show that for any $\Omega\in\tilde{\mathcal U}_X$ and any point $x\in X$ the following hold
$$(\tilde\alpha (g, (x, -1)), \tilde\alpha (f, (x, -1))\in\Omega\ \mbox{iff}\ (\tilde\alpha (g, (x, 1)), \tilde\alpha (f, (x, 1))\in\Omega\ \mbox{iff}$$
$$(\tilde\alpha (g, (x, 0)), \tilde\alpha (f, (x, 0))\in\Omega.$$
Let, for example, $(x, 1)\in b_m X$ and an arbitrary cover $\omega$ of $\{(x, 1)\}\times b_m X$ from $\tilde{\mathcal U}_X$  be of the form 
$$\omega=[\inf, (x_1, -1)]\bigcup\{(x_1, 0)\}\bigcup [(x_1, 1), (x_2, -1)]\bigcup\{(x_2, 0)\}\bigcup\ldots$$
$$\ldots\bigcup[(x_{n-1}, 1), (x_n, -1)]\bigcup\{(x_n, 0)\}\bigcup [(x_n, 1), \sup],$$
where $x_1<x_2<\ldots<x_{n-1}<x_n$. It consists of disjoint sets. 
Let us note that if $(y, 0)\in [\inf, (x_1, -1)]$ or $[(x_k, 1), (x_{k+1}, -1)]$, $k=1,\ldots, n-1$, or $[(x_n, 1), \sup]$, then  $(y, -1), (y, 1)\in [\inf, (x_1, -1)]$ or $ [(x_k, 1), (x_{k+1}, -1)]$, $k=1,\ldots, n-1$, or $[(x_n, 1), \sup]$ respectively. 

Take  $(x, 0)\in b_m X$ and the same cover $\omega$ on  $\{(x, 0)\}\times b_m X$. 

If $f(x, 0)=g(x, 0)$, then  $f(x, 1)=g(x, 1)$.  In particular, if  $f(x, 0)=g(x, 0)=(x_k, 0)$, then $f(x, 1)=g(x, 1)=(x_k, 1)\in [(x_k, 1), (x_{k+1}, -1)]$, $k=1,\ldots, n$.

If $f(x, 0)=(y, 0), g(x, 0)=(z, 0)\in [\inf, (x_1, -1)]$ or $[(x_k, 1), (x_{k+1}, -1)]$, $k=1,\ldots, n-1$, or $[(x_n, 0), \sup]$, then $f(x, 1)=(y, 1)$, $g(x, 1)=(z, 1)$. The pair of points $(y, 0), (y, 1)$ (and $(z, 0), (z, 1)$) is in one element of the cover $\omega$, the pair of points  $(y, 0), (z, 0)$ is in one element of the cover $\omega$. Hence, the pair of points $f(x, 1)=(y, 1)$, $g(x, 1)=(z, 1)$ is in one element of the cover $\omega$. 

In the case $(x, -1)\in b_m X$ the same reasoning can be used and (b') is proved.

\medskip

(c') follows from (b'). 
\end{proof}

\begin{rem}{\rm $b G$ is not a semitopological semigroup. 

Indeed, take the map  $f\in b G$,  $f(\inf)=\inf$, $f(x)=\sup$ otherwise. Any nbd of the map $g\in b G$, $g(\sup)=\sup$, $g (x)=\inf$ otherwise, is of the form  $O_x^y=\{h\in b G\ |\ h((x, 0))<(y, 0)\}$, $x, y\in X$. Since $f\circ g=g$ in any nbd of $g$ of the form of $O_x^y$ there is $h\in b G$ such that $h(\inf)=\inf$, $h(x)=(y, -1)$, $\inf<x<\sup$,  $h(\sup)=\sup$. Hence, $f\circ h(\inf)=\inf$, $f\circ h(x)=\sup$ and can't belong to the arbitrary nbd of $g$ the form $O_x^y$. Thus, multiplication on the left in $b G$ is not continuous.}
\end{rem}

\begin{cor}\label{contchain}
 Let $X$ be an  ultrahomogeneous chain and a discrete space, $G=(\aut(X), \tau_{\partial})$. Then 
\begin{itemize}
\item[{\rm(1)}] there is no $\mathcal U\in \mathbb{E}(G)$ such that $L\wedge R\subsetneq\mathcal U\subsetneq\widetilde{{\mathfrak E}}_{\alpha}(\mathcal U_X)$,
\item[{\rm(2)}]  $L\wedge R\in\mathbb{E}(G)$ iff $X$ is continuous.
\end{itemize}
\end{cor}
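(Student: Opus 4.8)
The plan is to deduce both statements from the data already collected in Theorem~\ref{Roelcke precomp4-2-1}, combined with the fixed-point characterization of $\mathbb{E}(G)$ in Theorem~\ref{mathfrak}. Throughout I use that $G$ is Roelcke precompact by Theorem~\ref{Roelcke precomp4-2-1}(b), so $L\wedge R\in\mathbb{BU}(G)$ and Theorem~\ref{mathfrak} applies to it, and that $\mathfrak{E}_{\alpha}(\mathcal U_X)=L\wedge R=R_{\Sigma_X}$ while $\widetilde{\mathfrak E}_{\alpha}(\mathcal U_X)=R_{\Sigma_{b_m X}}=\widetilde{\mathfrak E}_{\star}(L\wedge R)$ (here $\widetilde{\mathfrak E}_{\star}=\tilde{\mathfrak E}_{*}$).

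For (1) I would argue by a squeeze. Suppose $\mathcal U\in\mathbb{E}(G)$ satisfies $L\wedge R\subset\mathcal U\subset\widetilde{\mathfrak E}_{\alpha}(\mathcal U_X)$. By Theorem~\ref{Roelcke precomp4-2-1}(c) the upper bound equals $\widetilde{\mathfrak E}_{\star}(L\wedge R)$, so $L\wedge R\subset\mathcal U\subset\widetilde{\mathfrak E}_{\star}(L\wedge R)$, and Theorem~\ref{mathfrak}(f) yields $\widetilde{\mathfrak E}_{\star}(\mathcal U)=\widetilde{\mathfrak E}_{\star}(L\wedge R)$. Since $\mathcal U\in\mathbb{E}(G)$, Theorem~\ref{mathfrak}(d) gives $\widetilde{\mathfrak E}_{\star}(\mathcal U)=\mathcal U$. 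Hence $\mathcal U=\widetilde{\mathfrak E}_{\star}(L\wedge R)=\widetilde{\mathfrak E}_{\alpha}(\mathcal U_X)$, so the right inclusion cannot be strict; no $\mathcal U$ with $L\wedge R\subsetneq\mathcal U\subsetneq\widetilde{\mathfrak E}_{\alpha}(\mathcal U_X)$ exists.

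For (2), Theorem~\ref{mathfrak}(d) gives $L\wedge R\in\mathbb{E}(G)$ iff $\widetilde{\mathfrak E}_{\star}(L\wedge R)=L\wedge R$, i.e. (by (b),(c)) iff $R_{\Sigma_{b_m X}}=R_{\Sigma_X}$. Thus (2) reduces to proving $R_{\Sigma_X}=R_{\Sigma_{b_m X}}$ iff $X$ is continuous. The implication ``continuous $\Rightarrow$ equality'' is exactly Theorem~\ref{Roelcke precomp4-2-1}(b$'$). For the converse I prove the contrapositive. Under the order isomorphism between totally bounded equiuniformities on $G$ and $G$-compactifications (cf. Proposition~\ref{isomorph}), the inclusion $R_{\Sigma_X}\subset R_{\Sigma_{b_m X}}$ corresponds to the restriction map $\pr\colon bG\to b_r G$, $\pr(f)=f|_{X}$, and $R_{\Sigma_X}=R_{\Sigma_{b_m X}}$ holds iff this continuous surjection of compacta is injective. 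Assuming $X$ is not continuous, fix a proper gap $\gamma_0=(A,B)\in\Gamma$ and, using the description in Theorem~\ref{Roelcke precomp4-2-1}(d), set
$$f_i(x)=\inf\ (x<\gamma_0),\qquad f_i(x)=\sup\ (x>\gamma_0),\qquad f_1(\gamma_0)=\inf,\ f_2(\gamma_0)=\sup.$$
Both are monotone step maps with values in $\{\inf,\sup\}\subset b_m X\setminus X$; conditions (i)--(iii) and (v) are then immediate, and (iv) holds because $\gamma_0$ does not separate any triple $(x,-1)<(x,0)<(x,1)$, so each such triple is sent to a single point of $b_m X\setminus X$. Hence $f_1,f_2\in bG$. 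As $\gamma_0\notin X$, they agree on $X$, so $\pr(f_1)=\pr(f_2)$, whereas $f_1(\gamma_0)=\inf\ne\sup=f_2(\gamma_0)$. Thus $\pr$ is not injective, $R_{\Sigma_X}\ne R_{\Sigma_{b_m X}}$, and Theorem~\ref{Roelcke precomp4-2-1}(c) gives $L\wedge R\notin\mathbb{E}(G)$.

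The genuine step is the construction of the pair $f_1,f_2$: the value of an element of $bG$ at a proper gap is simply not recorded by its restriction to $X$, while for continuous $X$ the only points of $b_m X\setminus X$ are $\inf,\sup$ and the satellites $(x,\pm1)$, whose images are forced by (iv),(v). I expect the only point needing care to be the verification of (i)--(v) for $f_1,f_2$, which is short; everything else is bookkeeping with Theorems~\ref{Roelcke precomp4-2-1} and~\ref{mathfrak}.
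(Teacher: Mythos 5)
Your proof of item (1) and of the ``continuous $\Rightarrow$ Ellis'' half of item (2) is exactly the paper's argument: the squeeze via items (d), (f) of Theorem~\ref{mathfrak} together with $\widetilde{\mathfrak E}_{\star}(L\wedge R)=\widetilde{\mathfrak E}_{\alpha}(\mathcal U_X)$ from Theorem~\ref{Roelcke precomp4-2-1}(c), and the appeal to (b$'$). For the necessity half of (2), however, you take a genuinely different route. The paper stays at the level of uniformities on $G$: fixing a proper gap $\xi$ and a point $x\in X$, it uses ultrahomogeneity to build explicit automorphisms $g$ and $h\in\st_\sigma$ with $g(\xi)<(x,0)<gh(\xi)$, showing directly that no cover $\{Vg\st_{\sigma}\mid g\in G\}$ of $R_{\Sigma_X}$ refines the cover $\{Og\st_{\xi}\mid g\in G\}$ of $R_{\Sigma_{b_m X}}$; this uses only items (b), (c) of Theorem~\ref{Roelcke precomp4-2-1}. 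You instead pass to the completions: since $R_{\Sigma_X}\subset R_{\Sigma_{b_m X}}$ are totally bounded, equality holds iff the canonical surjection $bG\to b_rG$ (which, as in the paper's proof of item (e), is the restriction map $f\mapsto f|_X$) is injective, and you kill injectivity by exhibiting the two step-maps $f_1,f_2$ through a proper gap $\gamma_0$, verified against the description (i)--(v) of Theorem~\ref{Roelcke precomp4-2-1}(d). Your verification is sound (the key point, that a gap cannot separate a satellite triple $(x,-1)<(x,0)<(x,1)$, is correct since $\gamma_0$ lies above all satellites of $A$ and below all satellites of $B$), so the argument is correct; what it buys is brevity and a transparent ``the value at a gap is not recorded on $X$'' explanation of why $L\wedge R$ fails to be Ellis. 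What it costs is dependence on item (d), whose proof is the longest construction in that theorem, whereas the paper's cover-level argument is independent of (d) and needs only the cheap inclusions -- so the two proofs are not interchangeable if one wanted to prove the corollary before establishing the full description of $bG$.
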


\begin{proof} (1) follows from the equality $\widetilde{{\mathfrak E}}_{\star}(L\wedge R)=\widetilde{{\mathfrak E}}_{\alpha}(\mathcal U_X)$ (item (c) of Theorem~\ref{Roelcke precomp4-2-1}) and items (d), (f) of Theorem~\ref{mathfrak}.

Sufficiency in (2) is proved in item (b') of Theorem~\ref{Roelcke precomp4-2-1}. Necessity. Suppose that $X$ is not continuous. It is enough to show that  $R_{\Sigma_X}\ne R_{\Sigma_{b_m X}}$ by item (c) of  Theorem~\ref{Roelcke precomp4-2-1}.

Take $x\in X$ and let $\omega=[\inf, (x, -1)]\cup\{(x, 0)\}\cup[(x, 1), \sup]\in\tilde{\mathcal U}_X$, $\xi$ is a proper gap in $X$, $O\in N_G(e)$ is such that $\{Ox\ |\ x\in b_m X\}\succ\omega$.

For any $\sigma\in\Sigma_X$ let $(x_1, 0)<\xi<(x_2, 0)$, $x_1, x_2\in\sigma$ and $\sigma\cap ((x_1, 0), (x_2, 0))=\emptyset$. Take  $y_1, y_2, y_1', y_2'\in X$ such that $(x_1, 0)<(y_1, 0)<(y_1', 0)<\xi<(y_2, 0)<(y_2', 0)<(x_2, 0)$ and $g\in G$ such that $g((y_2, 0))<(x, 0)$, $g((y_2', 0))>(x, 0)$. Let $h\in G$ sends  points  $x_1<y_2<y_2'<x_2$ to  $x_1<y_1<y_1'<x_2$ and identity on the points from $\sigma$. Then $h\in\st_{\sigma}$ and, hence, for any $V\in N_G(e)$ $gh\in Vg\st_{\sigma}$. Further, $g(\xi)<(x, 0)$, $gh(\xi)>(x, 0)$. Therefore, $gh\not\in Og\st_{\xi}$. 

It is shown that for any $\sigma\in\Sigma_X$  and  $V\in N_G(e)$ the cover $\{Vg\st_{\sigma}\ |\ g\in G\}$ is not refined in the cover $\{Og\st_{\xi}\ |\ g\in G\}$. Hence, $L\wedge R=R_{\Sigma_X}\ne R_{\Sigma_{b_m X}}=\widetilde{{\mathfrak E}}_{\star}(L\wedge R)$.
\end{proof}

\begin{rem}\label{descrchain}  {\rm  Corollary~\ref{contchain} shows that an oligomorphic action (see~\cite{Tsan} and~\cite{Sorin1}) doesn't guarantee that the Roelcke compactification of an acting group is a proper Ellis semigroup compactification.}
\end{rem}

\begin{df} An action of a group $G\subset\aut X$ on a chain $X$ is strongly $n$-transitive, $n\geq 1$, if for any families of $n$ points $x_1< \ldots<x_n$ and $y_1<\ldots<y_n$ there exists $g\in G$ such that $g(x_k)=y_k$, $k=1,\ldots, n$. 

An action $G\curvearrowright X$, which is strongly $n$-transitive for all $n\in\mathbb N$, is called ultratransitive.
\end{df}

\begin{rem}\label{denseultr}
{\rm If an action of $G$ on a chain $X$ is  ultratransitive, then  $(G, \tau_{\partial})$ is a dense subgroup $(\aut (X), \tau_{\partial})$ and $X$ is a coset space of  $(G, \tau_{\partial})$.

Indeed, for any $g\in\aut (X)$ and a nbd $\{h\in\aut (X)\ |\ h(x_i)=g(x_i),\ i=1,\ldots, n\}$ of $g$, ultratransitivity of the action $G\curvearrowright X$ yields that there exists $f\in G\cap\{h\in\aut (X)\ |\ h(x_i)=g(x_i),\ i=1,\ldots, n\}$.}
\end{rem}

\begin{cor}\label{ccc}
Let  an action $\alpha_G: (G, \tau_{\partial})\times X\to X$ on a chain $X$ {\rm(}discrete space{\rm)} be ultratransitive, $\mathcal U_X^G$ is the maximal equiuniformity on $X$. Then
\begin{itemize}
\item[{\rm (a)}]  $\mathcal U_X=\mathcal U_X^G$, where $\mathcal U_X$ is the maximal equiuniformity on $X$  for the action $\alpha: (\aut (X), \tau_{\partial})\times X\to X$, 
\item[{\rm (b)}] $\mathfrak{E}_{\alpha}(\mathcal U_X)|_{G}=\mathfrak{E}_{\alpha_G}(\mathcal U_X^G)=(L\wedge R)_G$ and  $\widetilde{\mathfrak{E}}_{\alpha}(\mathcal U_X)|_{G}=\widetilde{\mathfrak{E}}_{\alpha_G}(\mathcal U_X^G)$, where $(L\wedge R)_G$ is the Roelcke uniformity on $G$, 
\item[{\rm (c)}] $\widetilde{\mathfrak{E}}_{\alpha_G}(\mathcal U_X^G)=\widetilde{{\mathfrak E}}_{\star}((L\wedge R)_G)$, 
\item[{\rm (d)}]  $b G$ and $b\, \aut (X)$ are topologically isomorphic proper enveloping Ellis semigroup compactifications.  
\end{itemize}
If $X$ is continuous, then the Roelcke compactifications $b_r G$ and $b_r\, \aut(X)$ are topologically isomorphic proper enveloping Ellis semigroup compactifications. If the Roelcke compactifications $b_r G$ is a proper enveloping Ellis semigroup compactification, then $X$ is continuous. 
\end{cor}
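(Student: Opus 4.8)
The plan is to reduce the whole statement to the case $G=(\aut(X),\tau_{\partial})$ already settled in Theorem~\ref{Roelcke precomp4-2-1}, exploiting that ultratransitivity makes $(G,\tau_{\partial})$ a dense subgroup of $(\aut(X),\tau_{\partial})$ with $X=G/\st_{x_0}$ (Remark~\ref{denseultr}). For (a) I would compute the stabilizer orbits directly. Since the base of $N_G(e)$ is the clopen stabilizers $\st_{\theta}$, $\theta\in\Sigma_X$, the maximal equiuniformity $\mathcal U_X^G$ on the coset space $X=G/\st_{x_0}$ has base covers $\{\st_{\theta}x\mid x\in X\}$ (open action, see~\cite{ChK}). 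For $\theta=\{x_1<\dots<x_n\}$ and $x$ in the open interval $(x_i,x_{i+1})$ of $X\setminus\theta$, strong $(n+1)$-transitivity of the $G$-action gives, for each $y$ in that same interval, an element of $G$ fixing $\theta$ and sending $x$ to $y$, while order preservation keeps the orbit inside the interval; hence $\st_{\theta}^{G}x$ is precisely the maximal subinterval of $X\setminus\theta$ containing $x$ (or $\{x\}$ if $x\in\theta$). The identical computation for $\aut(X)$ produces the same covers, so $\mathcal U_X=\mathcal U_X^G$.

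With (a) in hand, (b) is a restriction statement. As $\imath_G=\imath|_G$ and $\mathcal U_X=\mathcal U_X^G$, the very definition of the maps gives $\mathfrak{E}_{\alpha_G}(\mathcal U_X^G)=\mathcal U_X^p|_{\imath(G)}=\mathfrak{E}_{\alpha}(\mathcal U_X)|_G$, and likewise $\widetilde{\mathfrak{E}}_{\alpha_G}(\mathcal U_X^G)=\widetilde{\mathfrak{E}}_{\alpha}(\mathcal U_X)|_G$, since the extended action $\tilde\alpha$ on the common completion $b_m X$ restricts to that of $G$. Theorem~\ref{Roelcke precomp4-2-1}(b) gives $\mathfrak{E}_{\alpha}(\mathcal U_X)=L\wedge R$; restricting to the dense subgroup $G$ and using that the Roelcke uniformity of a dense subgroup is the trace of the ambient one (\cite[Proposition 3.24]{RD}), together with Lemma~\ref{coralpha} applied to $G$, yields $\mathfrak{E}_{\alpha_G}(\mathcal U_X^G)=(L\wedge R)_G=R_{\Sigma_X}$, which is (b). For (c) I would apply Theorem~\ref{autultr} to $G$ itself with $H=\st_{x_0}$: this $H$ is open, hence neutral, its largest invariant subgroup is $\bigcap_{x}\st_x=\{e\}$ by effectiveness, and $\mathcal U_X^G=\mathcal U_X\in\mathbb{BU}(X)$ by (a); moreover $G$, being dense in the Roelcke precompact group $\aut(X)$, is itself Roelcke precompact (\cite[Proposition 3.24]{RD}), so $(L\wedge R)_G\in\mathbb{BU}(G)$. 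Since $(L\wedge R)_G=\mathfrak{E}_{\alpha_G}(\mathcal U_X^G)\subset\widetilde{\mathfrak{E}}_{\alpha_G}(\mathcal U_X^G)$ by Proposition~\ref{order}, item (3) of Theorem~\ref{autultr} delivers $\widetilde{\mathfrak{E}}_{\alpha_G}(\mathcal U_X^G)=\widetilde{\mathfrak{E}}_{\star}((L\wedge R)_G)$, i.e. (c).

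Part (d) is then the cleanest step. Both $bG$ and $b\,\aut(X)$ are realized inside $(b_m X)^{b_m X}$ as $\cl~\tilde\imath_G(G)$ and $\cl~\tilde\imath(\aut(X))$, where $\tilde\imath_G=\tilde\imath|_G$ and $b_m X$ is the common completion by (a). Since $G$ is $\tau_{\partial}$-dense in $\aut(X)$ and $\tilde\imath$ is a topological embedding (Theorem~\ref{diagactalgcont}), one has $\tilde\imath(\aut(X))\subset\cl~\tilde\imath(G)$, whence $\cl~\tilde\imath_G(G)=\cl~\tilde\imath(\aut(X))$: the two proper enveloping Ellis semigroup compactifications coincide as compact right topological semigroups, the identity being the required topological isomorphism. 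For the continuity addendum, when $X$ is continuous the equalities $\st_{\inf}=\st_{\sup}=G$ and $\st_{(x,-1)}=\st_{(x,0)}=\st_{(x,1)}$ force $R_{\Sigma_X}=R_{\Sigma_{b_m X}}$ exactly as in Theorem~\ref{Roelcke precomp4-2-1}(b'), so $(L\wedge R)_G=\widetilde{\mathfrak{E}}_{\alpha_G}(\mathcal U_X^G)$ and $b_rG=bG$; combined with (d) and Theorem~\ref{Roelcke precomp4-2-1}(c') this makes $b_rG$ and $b_r\,\aut(X)$ topologically isomorphic proper Ellis semigroup compactifications. Conversely, if $b_rG$ is a proper Ellis semigroup compactification then $(L\wedge R)_G\in\mathbb{E}(G)$, so by (c) and Theorem~\ref{mathfrak}(e) $\mathfrak{E}_{\alpha_G}(\mathcal U_X^G)=\widetilde{\mathfrak{E}}_{\alpha_G}(\mathcal U_X^G)$; transporting this equality back through the traces of (b) (equal traces on the dense $G$ of two totally bounded equiuniformities on $\aut(X)$ force their equality) gives $\mathfrak{E}_{\alpha}(\mathcal U_X)=\widetilde{\mathfrak{E}}_{\alpha}(\mathcal U_X)$ for $\aut(X)$, whence $X$ is continuous by Corollary~\ref{contchain}(2).

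The step I expect to demand the most care is (a), the orbit computation underpinning $\mathcal U_X=\mathcal U_X^G$, together with the repeated appeal to density: I must verify that passing from $\aut(X)$ to the dense subgroup $G$ genuinely preserves the maximal equiuniformity, the Roelcke uniformity, the $R_{\Sigma}$-uniformities and the closures in $(b_m X)^{b_m X}$, and in particular that the gap construction behind the necessity half of Corollary~\ref{contchain}(2) can be realized with elements chosen inside $G$ (which it can, by ultratransitivity). Everything else is a formal transfer along the inclusion $G\hookrightarrow\aut(X)$.
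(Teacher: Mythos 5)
Your argument is correct, and for items (a)--(d) and the forward half of the continuity statement it is essentially the paper's own route: everything is transferred from Theorem~\ref{Roelcke precomp4-2-1} along the dense inclusion $G\hookrightarrow\aut(X)$. Your explicit orbit computation in (a) is exactly the content hidden behind the paper's one-line ``follows from ultratransitivity''; your use of Theorem~\ref{autultr}(3) in (c) versus the paper's use of items (1),(2) of Theorem~\ref{autultr} together with Theorem~\ref{mathfrak}(f) is only a repackaging (item (3) is proved from those); and in (d) both proofs identify $bG$ and $b\,\aut(X)$ as the same closure in $(b_mX)^{b_mX}$. The one genuine divergence is the converse of the last statement. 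The paper argues at the level of semigroups: if $b_rG$ is a proper Ellis compactification, then so is $b_r\aut(X)$, because the action $G\curvearrowright b_r\aut(X)$ extends to $\aut(X)\curvearrowright b_r\aut(X)$ through the Raikov completion of $G$; Corollary~\ref{contchain}(2) is then applied to $\aut(X)$. You instead stay inside the uniformity calculus: $(L\wedge R)_G\in\mathbb{E}(G)$ forces, via (b), (c) and Theorem~\ref{mathfrak}(d) --- note it is (d), not (e) as you cite; (e) is the orbit-map criterion --- the equality $\mathfrak{E}_{\alpha_G}(\mathcal U_X^G)=\widetilde{\mathfrak{E}}_{\alpha_G}(\mathcal U_X^G)$, which you transport to $\aut(X)$ by the observation that two compatible totally bounded uniformities with equal traces on a dense subgroup coincide. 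That trace lemma is true (the uniform isomorphism of completions extending $\id_G$ agrees with the inclusion of $\aut(X)$ on a dense set, hence fixes $\aut(X)$ pointwise), and the compatibility it needs holds by Theorem~\ref{diagactalgcont} and Lemma~\ref{l1}; so your route is valid, and it buys an argument avoiding the Raikov-completion extension of the action, at the cost of this extra general lemma. Two small finishing touches: in the forward direction, your step from $R_{\Sigma_X}=R_{\Sigma_{b_mX}}$ to $(L\wedge R)_G=\widetilde{\mathfrak{E}}_{\alpha_G}(\mathcal U_X^G)$ implicitly uses the inclusion $\widetilde{\mathfrak{E}}_{\alpha_G}(\mathcal U_X^G)\subset R_{\Sigma_{b_mX}}$ (Corollary~\ref{cora2-2}(1), or Theorem~\ref{autultr}(1), for $G$), which should be said; and your worry about realizing the gap construction of Corollary~\ref{contchain}(2) inside $G$ is actually moot in your own converse, since you apply that corollary to $\aut(X)$, not to $G$.
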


\begin{proof} (a) follows from ultratransitivity of action. 

(b) $\mathfrak{E}_{\alpha}(\mathcal U_X)|_{G}=\mathfrak{E}_{\alpha_G}(\mathcal U_X^G)$ and  $\widetilde{\mathfrak{E}}_{\alpha}(\mathcal U_X)|_{G}=\widetilde{\mathfrak{E}}_{\alpha_G}(\mathcal U_X^G)$ in (b) follows from (a) and the definition of Ellis equiuniformity. $L\wedge R|_G=(L\wedge R)_G$ by Remark~\ref{denseultr} and~\cite[Proposition 3.24]{RD} and Roelcke precompact. Since $\mathfrak{E}_{\alpha}(\mathcal U_X)=L\wedge R$ (item (b) of Theorem~\ref{Roelcke precomp4-2-1}), $\mathfrak{E}_{\alpha}(\mathcal U_X)|_{G}=(L\wedge R)_G$ and (b) is proved. 

(c) Since $(L\wedge R)_G\subset\widetilde{\mathfrak{E}}_{\alpha_G}(\mathcal U_X^G)\subset\widetilde{{\mathfrak E}}_{\star} ((L\wedge R)_G)$ by (b) and items (1), (2) of Theorem~\ref{autultr},  $\widetilde{\mathfrak{E}}_{\alpha_G}(\mathcal U_X^G)=\widetilde{{\mathfrak E}}_{\star}((L\wedge R)_G)$  (item (f) of Theorem~\ref{mathfrak}).

(d) follows from (b) and Remark~\ref{denseultr}.

\medskip

$b_r G$ and $b_r\, \aut(X)$ are uniformly isomorphic by the extension of the identity map on $G$ (since $G$ is dense in $\aut\, (X)$). 

If $X$ is continuous $\tilde{\mathfrak{E}}_{\alpha}(\mathcal U_X)=L\wedge R$ and $b_r\, \aut(X)$  is a proper enveloping Ellis semigroup compactification (items (b') and (c') of Theorem~\ref{Roelcke precomp4-2-1}). 

If  $b_r G$ is a proper enveloping Ellis semigroup compactification of $G$, then $b_r\, \aut(X)$ is a proper enveloping Ellis semigroup compactification of $\aut\, (X)$ ($b_r\, \aut(X)$ is a right topological semigroup and the action $G\curvearrowright b_r\, \aut(X)$ is extended to the action  $\aut(X)\curvearrowright b_r\, \aut(X)$, because $G$ is dense in $\aut(X)$ and  $\aut(X)$ is contained in the Raikov completion of $G$ to which the action is extended~\cite{Megr0}). By Corollary~\ref{contchain} $X$ is continuous and the last statement is proved. 
\end{proof}

\begin{ex}\label{Thompson}
{\rm The action of the Thompson group $(F, \tau_{\partial})$ is ultratransitive on the chain (not continuous)  $\mathbb Q_r$ of dyadic rational numbers of $(0, 1)$ and every automorphism from $F$ is a piecewise-linear map that have a finite number of breakpoints~\cite{Canon}.

By Theorem~\ref{Roelcke precomp4-2-1} and Corollary~\ref{ccc} $F$ is Roelcke precompact, however, $L\wedge R\not\in\mathbb{E}(F)$ and the least proper enveloping Ellis semigroup compactification which is greater than the Roelcke compactification of $F$ is the completion of $(F, \widetilde{\mathfrak{E}}_{\alpha_F}(\mathcal U_{\mathbb Q_r}^F))$. 

The description of $b_m\mathbb Q_r$ is given above. Clopen intervals with endpoints whose first coordinates (following the description) are dyadic rational numbers form the countable base of the topology of $b_m\mathbb Q_r$. Hence,  $b_m\mathbb Q_r$ is a metrizable linearly ordered compacta (see, \cite[Example 5.4]{MegrS}).

$b_r F=b_r \aut\, (\mathbb Q_r)$ by item (b) of Corollary~\ref{ccc} and its description is given in item (e)  of Theorem~\ref{Roelcke precomp4-2-1}.}
\end{ex}


\subsection{The maps $\mathfrak{E}_\alpha$ and  $\tilde{\mathfrak{E}}_\alpha$ for the action of the unitary group on a Hilbert space}\label{UNIT}

Let $U({\bf H})$ be the unitary group of a (complex or real) Hilbert space  ${\bf H}$. It is a group of isometries for the action of $U({\bf H})$ on the unit sphere ${\rm S}\subset{\bf H}$, ${\rm S}$ is a coset space of  $(U({\bf H}), \tau_p)$ under its action~\cite[\S\ 2.1]{Kad} and the stabilizer $\st_x$ of any point $x\in {\rm S}$ is a neutral subgroup of $U({\bf H})$~\cite[Remark 3.14]{Kozlov}, ${\rm B}$ in  the unit ball in ${\bf H}$. The t.p.c. $ \tau_p$ coincides with the {\it strong operator topology} (see for example, \cite{Rud}) on  $U({\bf H})$ (since for every $x\ne 0$ 
$||g(x)-x||<\varepsilon\Longleftrightarrow ||g\big(\frac{x}{||x||}\big)-\frac{x}{||x||}||<\frac{\varepsilon}{||x||}$).

Let $\mathcal L_{\bf H}$ be the set of all finite-dimensional subspaces of ${\bf H}$, ${\rm B}_L=\{x\in L\ |\ ||x||\leq 1\}$ the {\it unit ball},  ${\rm S}_L=\{x\in L\ |\ ||x||=1\}$ is the {\it unit sphere} in $L$, $L\in\mathcal L_{\bf H}$.  

\begin{lem}\label{baseunit}  The sets $V_{L, \varepsilon}=\{ g\in U({\bf H})\ |\ ||g(x)-x||<\varepsilon,\ x\in {\rm B}_L\}$, $L\in\mathcal L_{{\bf H}}$, $\varepsilon>0$, form a base at the unit in  $(U({\bf H}), \tau_p)$.
\end{lem}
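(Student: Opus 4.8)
The plan is to verify the two defining properties of a neighbourhood base at the unit $e$ (the identity operator) in $(U(\mathbf H),\tau_p)$: that each $V_{L,\varepsilon}$ is a $\tau_p$-neighbourhood of $e$, and that every $\tau_p$-neighbourhood of $e$ contains some $V_{L,\varepsilon}$. Recall that a base of $\tau_p$-neighbourhoods of $e$ is given by the sets $W(x_1,\dots,x_n;\delta)=\bigcap_{i=1}^n\{g\in U(\mathbf H)\mid \|g(x_i)-x_i\|<\delta\}$, $x_i\in\mathbf H$, $\delta>0$, obtained by intersecting finitely many subbasic sets $[x_i,O_i]$ with $O_i$ a norm-ball about $x_i$ (here $e(x_i)=x_i\in O_i$).

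First I would show that each $V_{L,\varepsilon}$ is a neighbourhood of $e$. Fix $L\in\mathcal L_{\mathbf H}$ with an orthonormal basis $e_1,\dots,e_k$ and $\varepsilon>0$. The key estimate is that control on the finitely many basis vectors forces uniform control over the whole ball: for $g\in U(\mathbf H)$ and $x=\sum_i c_ie_i\in\mathrm B_L$ (so $\sum_i|c_i|^2=\|x\|^2\le 1$), linearity gives $g(x)-x=\sum_i c_i(g(e_i)-e_i)$, whence by the triangle inequality and Cauchy--Schwarz
$$\|g(x)-x\|\le\sum_i|c_i|\,\|g(e_i)-e_i\|\le\Big(\sum_i|c_i|^2\Big)^{1/2}\Big(\sum_i\|g(e_i)-e_i\|^2\Big)^{1/2}\le\Big(\sum_i\|g(e_i)-e_i\|^2\Big)^{1/2}.$$
Consequently $W(e_1,\dots,e_k;\varepsilon/\sqrt k)\subset V_{L,\varepsilon}$, so $V_{L,\varepsilon}$ contains a basic $\tau_p$-neighbourhood of $e$ and is itself a neighbourhood of $e$.

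Conversely, given an arbitrary $\tau_p$-neighbourhood of $e$, it suffices to treat a basic one $W(x_1,\dots,x_n;\varepsilon)$. Put $L=\mathrm{span}(x_1,\dots,x_n)\in\mathcal L_{\mathbf H}$ and $M=\max_i\|x_i\|$; if $M=0$ the neighbourhood is all of $U(\mathbf H)$ and there is nothing to prove, so assume $M>0$. For $x_i\ne 0$ the vector $x_i/\|x_i\|$ lies in $\mathrm S_L\subset\mathrm B_L$, and for $g\in V_{L,\varepsilon/M}$ linearity yields $\|g(x_i)-x_i\|=\|x_i\|\,\|g(x_i/\|x_i\|)-x_i/\|x_i\|\|<\|x_i\|\cdot(\varepsilon/M)\le\varepsilon$; the inequality is trivial when $x_i=0$. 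Hence $V_{L,\varepsilon/M}\subset W(x_1,\dots,x_n;\varepsilon)$, which completes the verification.

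The only genuinely non-formal point is the uniform estimate in the first step: it is exactly there that the finite-dimensionality of $L$ (reducing to finitely many coordinates), together with the linearity and the isometry property of elements of $U(\mathbf H)$, enters, packaged through Cauchy--Schwarz. The second inclusion is a routine homogeneity argument exploiting the linearity of $g$. I would therefore expect the Cauchy--Schwarz estimate over $\mathrm B_L$ to be the crux, with everything else amounting to bookkeeping about the subbase of $\tau_p$.
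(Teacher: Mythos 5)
Your proof is correct and takes essentially the same route as the paper: one inclusion by taking $L$ to be the span of the finitely many points defining a basic $\tau_p$-neighbourhood, the other by controlling $g$ on an orthonormal basis of $L$. In fact your Cauchy--Schwarz step with the shrunken tolerance $\varepsilon/\sqrt{k}$ is more careful than the paper's, which asserts $\{g\ |\ \|g(e_i)-e_i\|<\varepsilon,\ i=1,\ldots,n\}\subset V_{L,\varepsilon}$ with the \emph{same} $\varepsilon$ --- literally false (e.g.\ $g=I-2vv^{*}$ with $v=(e_1+e_2)/\sqrt{2}$ gives $\|g(e_i)-e_i\|=\sqrt{2}$ but $\|g(v)-v\|=2$), though harmless once one shrinks $\varepsilon$ exactly as you do.
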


\begin{proof} If $V=\{ g\in U({\bf H})\ |\ ||g(x)-x||<\varepsilon,\ x\in\sigma\}\in N_{U({\bf H})}(e)$, where $\sigma=\{x_1,\ldots, x_n\}\in\Sigma_{S}$ and  $L={\rm span}(x_1, \dots, x_n)$ (the smallest linear subspace that contains $\sigma$), then $V_{L, \varepsilon}\subset V$.

For $V_{L, \varepsilon}$ let $e_1,\ldots, e_n$ be the orthonormal basis of $L$. Then  $V=\{ g\in U({\bf H})\ |\ ||g(x)-x||<\varepsilon,\ x\in\sigma\}\subset V_{L, \varepsilon}$, where $\sigma=\{e_1,\ldots, e_n\}$. From this lemma follows. 
\end{proof}

\begin{pro}\label{unitmax}
The maximal equiuniformity $\mathcal U_{\rm S}$ on $S$ is totally bounded.
\end{pro}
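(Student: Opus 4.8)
The plan is to verify total boundedness directly from the description of the maximal equiuniformity on a coset space. Since $S$ is a coset space of $(U(\mathbf H),\tau_p)$ with neutral stabilizers, a base of its uniform covers is given by $\{Ox : x\in S\}$ with $O\in N_{U(\mathbf H)}(e)$; because a uniform space is totally bounded exactly when every base cover admits a finite subcover, and by Lemma~\ref{baseunit} the sets $V_{L,\varepsilon}$ form a base at the unit, it suffices to produce, for each finite-dimensional $L\in\mathcal L_{\mathbf H}$ and each $\varepsilon>0$, a finite subcover of $\gamma_{L,\varepsilon}=\{V_{L,\varepsilon}\,x : x\in S\}$. I would fix a unit vector $f\in L^{\perp}$ (possible since $\dim L<\infty$ and $\dim\mathbf H=\infty$) and, writing $P_L$ for the orthogonal projection onto $L$, attach to each $v$ in the closed unit ball $B_L$ of $L$ the point $x_v=v+\sqrt{1-\|v\|^2}\,f\in S$, so that $P_L x_v=v$.

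First I would record two elementary geometric facts. \emph{(i) Fibres collapse:} if $x\in S$ has $P_L x=v$, then $x-v$ and $x_v-v$ are vectors of $L^{\perp}$ of equal norm $\sqrt{1-\|v\|^2}$, so, as $L^{\perp}$ is infinite-dimensional, there is a unitary $g_1$ fixing $L$ pointwise with $g_1(x_v)=x$; such a $g_1$ lies in $V_{L,\varepsilon}$ for every $\varepsilon>0$, since $\|g_1(y)-y\|=0$ for $y\in B_L$. \emph{(ii) Nearby unit vectors are joined by a small unitary:} for unit vectors $u,u'$ there is a unitary $g$ with $g(u)=u'$ (rotate inside $\mathrm{span}(u,u')$ and act as the identity on its orthocomplement) whose operator-norm distance $\|g-I\|_{\mathrm{op}}$ tends to $0$ as $\|u-u'\|\to 0$; in particular $g\in V_{L,\varepsilon/2}$ once $\|u-u'\|$ is small enough, because $\sup_{y\in B_L}\|g(y)-y\|\le\|g-I\|_{\mathrm{op}}$.

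The key step combines these. As $B_L$ is a compact subset of the finite-dimensional space $L$, the map $v\mapsto x_v$ is uniformly continuous, so I may choose $\delta>0$ such that $\|v-v'\|<\delta$ forces $\|x_v-x_{v'}\|$ small enough that (ii) yields $g_2\in V_{L,\varepsilon/2}$ with $g_2(x_{v'})=x_v$. Pick a finite $\delta$-net $v_1,\dots,v_k$ in $B_L$. For an arbitrary $x\in S$ set $v=P_L x$ and choose $v_i$ with $\|v-v_i\|<\delta$; then $g_2(x_{v_i})=x_v$ and $g_1(x_v)=x$ as in (i)--(ii), while $g_1g_2\in V_{L,\varepsilon}$ because for $y\in B_L$ one has $\|g_1g_2(y)-y\|=\|g_1(g_2y-y)\|=\|g_2y-y\|<\varepsilon/2$ (using $g_1|_L=\mathrm{id}$ and unitarity of $g_1$). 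Hence $x=(g_1g_2)(x_{v_i})\in V_{L,\varepsilon}x_{v_i}$, so $S=\bigcup_{i=1}^k V_{L,\varepsilon}x_{v_i}$ is a finite subcover of $\gamma_{L,\varepsilon}$, and total boundedness of $\mathcal U_S$ follows.

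The main obstacle is fact (ii) in its uniform quantitative form: one must ensure that closeness of the net points $v_i$, measured inside $L$, really forces the connecting unitaries to be uniformly small \emph{on $L$}, and that they compose with the fibre rotations without leaving $V_{L,\varepsilon}$. The compactness of $B_L$ (hence the finite net) and the fact that $g_1$ fixes $L$ pointwise are precisely what make the composition estimate $\|g_1g_2(y)-y\|<\varepsilon$ succeed, while the infinite-dimensionality of $L^{\perp}$ is what guarantees the fibre rotation $g_1$ exists for an arbitrary perpendicular direction.
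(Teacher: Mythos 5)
Your proof is correct, but it takes a genuinely different route from the paper's. The paper argues topologically: it forms the map $x\mapsto(\pr(x),\|\pr_\perp(x)\|)$ from ${\rm S}$ onto the compactum ${\rm S}_+^{n}$, proves that this map is open (the technical core of the paper's proof, which uses the same uniform continuity of $t\mapsto\sqrt{1-t^2}$ that you invoke), identifies ${\rm S}_+^{n}$ with the orbit space ${\rm S}/\st_L$, extracts a finite subcover of the image of $\{Vx\ |\ x\in{\rm S}\}$ downstairs, and pulls it back via $f^{-1}(f(A))=\st_L A$, using $\st_L\subset V$ and $V^2\subset O$. You never form the orbit space and never prove openness; instead you build an explicit section $v\mapsto x_v$ over the compact ball ${\rm B}_L$, take a $\delta$-net there, and move an arbitrary $x\in{\rm S}$ onto a net point by a product $g_1g_2$, where the fibre unitary $g_1\in\st_L$ costs nothing (since $\st_L\subset V_{L,\varepsilon'}$ for every $\varepsilon'>0$) and the connecting unitary $g_2$ is controlled in operator norm. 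The two arguments share the same skeleton --- compactness of the finite-dimensional base plus the fact that the stabilizer $\st_L$ sweeps out the infinite-dimensional fibre directions for free --- but the machinery differs: the paper's version is conceptually cleaner and yields reusable facts (openness of $\pr|_{\rm S}$, the identification ${\rm S}/\st_L\cong{\rm S}_+^{n}$), while yours is more elementary and quantitative, trading the openness lemma for the uniform estimate in your fact (ii); that estimate does hold uniformly, e.g. one can arrange $\|g-I\|_{\mathrm{op}}\le 2\|u-u'\|$ in both the real and the complex case, which is exactly what your composition bound needs. Two minor remarks: the appeal to infinite-dimensionality of $L^\perp$ in your fact (i) is unnecessary (any two vectors of equal norm in a nonzero Hilbert space are related by a unitary of that space), although, like the paper's own proof, your construction does assume $L\ne{\bf H}$, which only excludes the trivial case where ${\bf H}$ is finite-dimensional and ${\rm S}$ is compact; and your reduction of total boundedness to finite subcovers of the covers $\{V_{L,\varepsilon}x\ |\ x\in{\rm S}\}$ is legitimate because, by Lemma~\ref{baseunit}, these covers form a base of the maximal equiuniformity.
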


\begin{proof}
Let $O\in N_{U({\bf H})}(e)$,  $V\in N_{U({\bf H})}(e)$ is such that $V^2\subset O$, and, without loss of generality, is of the form $V=V_{L, \varepsilon}$,  $L\in\mathcal L_{{\bf H}}$, $\varepsilon>0$, by Lemma~\ref{baseunit}, $\{Ox\ |\ x\in {\rm S}\}\in\mathcal U_{\rm S}$.

Let ${\bf H}={\rm L}\oplus {\rm L}^\perp$,  $\pr: {\bf H}\to {\rm L}$ and $\pr_\perp: {\bf H}\to {\rm L}^\perp$ be the orthogonal projections, $\dim L=n$. One can define a continuous map $F: {\bf H}\to {\rm L}\times\mathbb R_+$, $F(x)=(\pr(x), ||{\rm pr}_\perp (x)||)$. Since  $||\pr(x)||^2+||\pr_\perp (x)||^2=1$ for $x\in {\rm S}$, the restriction $f=F|_{\rm S}$ of $F$ to the unit sphere sends points of ${\rm S}$ to the compact set ${\rm S}_+^{n}=\{(t, \tau)\in L\times\mathbb R_+: ||t||^2+\tau^2=1\}$.

The map $f: {\rm S}\to {\rm S}_+^{n}$ is a composition of the projection $\pr|_{\rm S}: {\rm S}\to {\rm B}_L$ and the map $h: {\rm B}_L\to  {\rm S}_+^{n}$, $h(x)=(x, \sqrt{1-||x||^2})$, which is a homeomorphism of ${\rm B}_L$ and ${\rm S}_+^{n}$. To show that the map $f$ is open it is sufficient to show that  $\pr|_{\rm S}$ is open. 

Let us note that the map $\sqrt{1-t^2}: [0, 1]\to [0, 1]$ is uniformly continuous. Hence, for any $b>0$ there exists $a>0$ such that if $|t-t'|<a$, then $|\sqrt{1-t^2}-\sqrt{1-{t'}^2}|<b$.

Take $x\in {\rm S}$ and its nbd $W(x)={\rm S}\cap W$, where  $W=\{y\ |\ ||\pr(x)-{\rm pr}(y)||<a,  ||\pr_\perp(x)-\pr_\perp(y)||<b\}$. For any $x'\in {\rm B}_L$ such that $||\pr(x)-x'||<a$ take $y'=\frac{\sqrt{1-||x'||^2}}{||\pr_\perp(x)||}\pr_\perp(x)$. The point $(x', y')\in {\rm S}$ and  
$$||y'-\pr_\perp(x)||=|\sqrt{1-||x'||^2}-||\pr_\perp(x)|||=|\sqrt{1-||x'||^2}-\sqrt{1-||\pr(x)||^2}|<b,$$ 
since  $|||\pr(x)||-||x'|||\leq ||\pr(x)-x'||<a$ and the choice of $a$ and $b$. Thus, $\pr(W)\cap {\rm B}_L=\pr|_{\rm S}(W(x))$. This shows that  $\pr|_{\rm S}$ (and, hence, $f$) is open.

For the action  $\st_{L}\curvearrowright {\bf H}(={\rm L}\oplus {\rm L}^\perp)$ the group $\st_{L}$ (stabilizer of points of the subspace $L$) is naturally identified with the unitary group $U({\rm L}^\perp)$. The sphere ${\rm S}$ is an invariant subset which orbit space ${\rm S}/\st_{L}$ is identified with ${\rm S}_+^{n}$. Since the map of ${\rm S}$ onto the orbit space ${\rm S}/\st_{L}$ is open, ${\rm S}/\st_{L}$ is homeomorphic to ${\rm S}_+^{n}$. 

The cover $\{Vx\ |\ x\in {\rm S}\}\in\mathcal U_{\rm S}$, its image $\{f(Vx)\ |\ x\in {\rm S}\}$ is an open cover of the compactum ${\rm S}/\st_{L}$ and has a finite refinement $f(V_k)$, $k=1,\ldots, m$. Since $\st_L\subset V$, the finite open cover $f^{-1}(f(V_k))=\st_L V_k$, $k=1,\ldots, m$, of ${\rm S}$ is refined in  $\{Ox\ |\ x\in {\rm S}\}$. Therefore, $\mathcal U_{\rm S}$ is totally bounded.
\end{proof}

\begin{rem}{\rm Another reasoning (which uses Roelcke precompactness of $U({\bf H})$~\cite{U1998}) to prove that the maximal equiuniformity $\mathcal U_{\rm S}$ on $S$ is totally bounded can be found in~\cite[Example 3.31]{Kozlov}.}
\end{rem}

\begin{lem}\label{unifunit}
The base of the uniformity $\widetilde{\mathfrak{E}}_{\alpha}(\mathcal U_{\rm S})$ is formed by the entourages 
$$\{(f, g)\in U({\bf H})^2\ |\ |(f(x), y)-(g(x), y)|<\epsilon,\ x, y\in {\rm B}_L\},\ L\in \mathcal L_{\bf H},$$
where $(\cdot, \cdot)$ is the scalar product on  ${\bf H}$.
\end{lem}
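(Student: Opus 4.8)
The plan is to identify $\widetilde{\mathfrak{E}}_{\alpha}(\mathcal U_{\rm S})$ as the initial uniformity with respect to the orbit maps on the completion of $({\rm S},\mathcal U_{\rm S})$, and then to rewrite the corresponding subbasic entourages in terms of scalar products. By Theorem~\ref{diagactalgcont} (item IV), $\widetilde{\mathfrak{E}}_{\alpha}(\mathcal U_{\rm S})$ is the least uniformity on $U({\bf H})$ for which every orbit map $\tilde\alpha_u\colon U({\bf H})\to (b{\rm S},\tilde{\mathcal U}_{\rm S})$, $u\in b{\rm S}$, is uniformly continuous, i.e. the initial uniformity with respect to this family. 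So first I would identify the compactification $(b{\rm S},\tilde{\mathcal U}_{\rm S})$: using the quotient description ${\rm S}/\st_L\cong {\rm S}_+^{n}$ from the proof of Proposition~\ref{unitmax}, the space $b{\rm S}$ is the closed unit ball ${\rm B}$ with the weak topology (weakly compact since ${\bf H}$ is reflexive, and ${\rm S}$ is weakly dense in ${\rm B}$), the extended action is $\tilde\alpha(g,u)=g(u)$, and $\tilde{\mathcal U}_{\rm S}$ is the unique (weak) uniformity on this compactum. Since the weak topology on ${\rm B}$ is generated by the functionals $v\mapsto (v,w)$, $w\in{\bf H}$, which are continuous on the compact space $b{\rm S}$ and separate points, the uniformity $\tilde{\mathcal U}_{\rm S}$ is initial with respect to them.

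Combining the two initial-uniformity descriptions, $\widetilde{\mathfrak{E}}_{\alpha}(\mathcal U_{\rm S})$ is initial with respect to the functions $g\mapsto (g(u),w)$, $u\in{\rm B}$, $w\in{\bf H}$; hence its entourages are generated by the sets $\{(f,g): |(f(u)-g(u),w)|<\epsilon\}$ as $(u,w)$ runs over finite subsets of ${\rm B}\times{\bf H}$. It then remains to show that this generating family and the family $\{E_{L,\epsilon}\}$ of the statement, where $E_{L,\epsilon}=\{(f,g): |(f(x)-g(x),y)|<\epsilon,\ x,y\in{\rm B}_L\}$, are mutually refining. (That $\{E_{L,\epsilon}\}$ is itself a filter base is immediate, since ${\rm B}_L\subset{\rm B}_{L'}$ whenever $L\subset L'$.)

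For one inclusion --- every basic (finite-intersection) entourage of $\widetilde{\mathfrak{E}}_{\alpha}(\mathcal U_{\rm S})$ contains some $E_{L,\epsilon}$ --- I would exploit that a finite parameter set is automatically finite-dimensional: given finitely many $(u_k,w_k)\in{\rm B}\times{\bf H}$ and $\epsilon>0$, put $L=\mathrm{span}\{u_k,w_k\}\in\mathcal L_{\bf H}$ and $M=\max_k\max(1,\|w_k\|)$; then $u_k\in{\rm B}_L$ and $w_k/M\in{\rm B}_L$, so $E_{L,\epsilon/M}$ forces $|(f(u_k)-g(u_k),w_k)|<\epsilon$ for every $k$. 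For the reverse --- each $E_{L,\epsilon}$ belongs to $\widetilde{\mathfrak{E}}_{\alpha}(\mathcal U_{\rm S})$ --- I would use that all elements of $U({\bf H})$ are isometries and ${\rm B}_L$ is norm-compact. Choosing an $\eta$-net $N\subset{\rm B}_L$ with $\eta=\epsilon/8$, the finite intersection $\{(f,g):|(f(x)-g(x),y)|<\epsilon/2,\ x,y\in N\}$ is a basic entourage of the initial uniformity; and for arbitrary $x,y\in{\rm B}_L$, approximating by $x',y'\in N$ and using $|(f(x)-f(x'),y)|\le\|x-x'\|\,\|y\|\le\eta$ together with the analogous estimates gives $|(f(x)-g(x),y)|<\epsilon$. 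Thus this basic entourage is contained in $E_{L,\epsilon}$.

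The main obstacle is the identification of $(b{\rm S},\tilde{\mathcal U}_{\rm S})$ with the weakly compact ball: one must check that the maximal equiuniformity $\mathcal U_{\rm S}$ is exactly the trace on ${\rm S}$ of the weak uniformity, which is where the finite-dimensional quotients ${\rm S}/\st_L\cong {\rm S}_+^{n}$ of Proposition~\ref{unitmax} are essential --- the coordinate $u\mapsto\sqrt{1-\|\pr(u)\|^2}$ occurring there is weakly continuous, so the inverse limit of the ${\rm S}_+^{n}$ carries precisely the weak topology (and not the strictly finer topology one would get from $u\mapsto\|{\rm pr}_\perp(u)\|$). Once this identification is in place, the only genuinely analytic point is the finite-net reduction above, and it is routine precisely because the acting operators are contractions, so the maps $x\mapsto(f(x),y)$ are equi-Lipschitz on ${\rm B}_L$.
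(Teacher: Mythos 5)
Your proposal is correct, and its skeleton --- identify the completion $(b{\rm S},\tilde{\mathcal U}_{\rm S})$ with the unit ball ${\rm B}$ in the weak topology, then translate the resulting entourages into the scalar-product form --- is the same as the paper's; the two steps, however, are implemented differently. For the identification of $b{\rm S}$ the paper simply cites~\cite[Corollary 2.2]{Stojanov}, whereas you re-derive it from the finite-dimensional quotients ${\rm S}/\st_L\cong{\rm S}_+^{n}$ in the proof of Proposition~\ref{unitmax}; your sketch is sound (the coordinate maps $u\mapsto(\pr_L u,\sqrt{1-\|\pr_L u\|^2})$ are weakly continuous and jointly separate points of ${\rm B}$, so the inverse-limit uniformity is the weak one), but this is the one place that would need fleshing out --- e.g.\ checking that the weak trace on ${\rm S}$ is itself a (bounded) equiuniformity so that maximality gives the reverse inclusion --- and the citation makes all of that unnecessary. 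For the translation step the routes genuinely diverge: the paper writes the entourages of $\widetilde{\mathfrak{E}}_{\alpha}(\mathcal U_{\rm S})$ as $\{(f,g)\ |\ \|\pr_L f(x)-\pr_L g(x)\|<\epsilon,\ x\in\sigma\}$, converts via $\|\pr_L v\|=\sup_{y\in{\rm B}_L}|(v,y)|$, and controls all of ${\rm B}_L$ from the values at an orthonormal basis $e_1,\dots,e_n$ by linearity of the operators, at the cost of a factor $1/n$; you instead compose initial uniformities (Theorem~\ref{diagactalgcont} item IV followed by the separating functionals $(\cdot,w)$) and pass from finitely many parameters to all of ${\rm B}_L$ by an $\epsilon$-net argument, using norm-compactness of ${\rm B}_L$ and the equi-Lipschitz bound $|(f(x)-f(x'),y)|\le\|x-x'\|$ available because the $f$ are isometries. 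The two estimates buy the same thing; the paper's is a one-line linear-algebra computation, while yours is softer and would work verbatim for any family of contractions, not just unitaries. Finally, note that your ``span the finitely many parameters and rescale'' step in the first refinement direction is exactly the paper's normalization ${\rm span}(\sigma)=L$, $0\notin\sigma$, so there the two arguments coincide.
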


\begin{proof} In~\cite[Corollary 2.2]{Stojanov} it is shown that the unit ball ${\rm B}$ in  ${\bf H}$ in the {\it weak topology} (the topology initial with respect to the projections onto finite-dimensional subspaces) is the maximal $G$-compactification of ${\rm S}$ (completion of $({\rm S}, \mathcal U_{\rm S})$). 

Therefore, the entourages from $\widetilde{\mathcal U}_{\rm S}$ are of the form 
$$\{(x, y)\in {\rm B}^2\ |\ ||\pr_L x-\pr_L y||<\epsilon\},\ L\in \mathcal L_{\bf H},$$
and the entourages from $\widetilde{\mathfrak{E}}_{\alpha}(\mathcal U_{\rm S})$ are of the form 
$$\{(f, g)\in U({\bf H})^2\ |\  ||\pr_L f(x)-\pr_L g(x)||<\epsilon,\ x\in\sigma\},\ \sigma\in\Sigma_{\rm B},\ L\in \mathcal L_{\bf H},$$
where $\pr_L$ is the orthogonal projection of ${\bf H}$ onto $L$. Without loss of generality, one can assume that $\sigma=\{x_1,\ldots, x_n\}$, $0\not\in\sigma$ and ${\rm span}\ (x_1,\ldots, x_n)=L$.
$$||\pr_L f(x)-\pr_L g(x)||=||\pr_L (f(x)-g(x))||=\sup\limits_{ z\in {\rm S}_L}|(f(x)-g(x), z)|=\sup\limits_{ z\in {\rm S}_L}|(f(x), z)-(g(x), z)|.$$
For $y\in {\rm B}_L$, $y\ne 0$
$$|(f(x), y)-(g(x), y)|=||y||\cdot|(f(x), \frac{y}{||y||})-(g(x), \frac{y}{||y||})|\leq |(f(x), \frac{y}{||y||})-(g(x), \frac{y}{||y||})|.$$
Therefore, $||\pr_L f(x)-\pr_L g(x)||=\sup\limits_{ y\in {\rm B}_L}|(f(x), y)-(g(x), y)|$ and 
$$\{(f, g)\in U({\bf H})^2\ |\ |(f(x), y)-(g(x), y)|<\epsilon,\ x, y\in {\rm B}_L\}\subset$$
$$\subset\{(f, g)\in U({\bf H})^2\ |\  ||\pr_L f(x)-\pr_L g(x)||<\epsilon,\ x\in\sigma\}.$$

For $\{(f, g)\in U({\bf H})^2\ |\ |(f(x), y)-(g(x), y)|<\epsilon,\ x, y\in {\rm B}_L\},\ L\in \mathcal L_{\bf H}$, let $e_1,\ldots, e_n$ be the orthonormal basis of $L$,  $\sigma=\{e_1,\ldots, e_n\}$. 
For any $x\in {\rm B}_L$, one has $x=\lambda_1 e_1+\ldots+\lambda_n e_n$, where $|\lambda_k|\leq 1$, $k=1,\ldots, n$, and 
$$|(f(x), y)-(g(x), y)|\leq\sum\limits_{k=1}^n |\lambda_k| |(f(e_k), y)-(g(e_k), y)|\leq\sum\limits_{k=1}^n|(f(e_k), y)-(g(e_k), y)|.$$
Therefore, if $||\pr_L f(x)-\pr_L g(x)||<\frac{\epsilon}n$, $x\in\sigma$, then $|(f(x), y)-(g(x), y)|\leq\varepsilon$ and 
$$\{(f, g)\in U({\bf H})^2\ |\  ||\pr_L f(x)-\pr_L g(x)||<\frac{\epsilon}n,\ x\in\sigma\}\subset$$
$$\{(f, g)\in U({\bf H})^2\ |\ |(f(x), y)-(g(x), y)|<\epsilon,\ x, y\in {\rm B}_L\}.$$
\end{proof}

\begin{thm}\label{Roelcke precompHilb}  For the action $\alpha:  U({\bf H})\times {\rm S}\to {\rm S}$ of the unitary group $U({\bf H})$ of a Hilbert space  ${\bf H}$ on the the unit sphere ${\rm S}$ and the maps $\mathfrak{E}_{\alpha}: \mathbb{BU}({\rm S})\to\mathbb{BU}(U({\bf H}))$,  $\widetilde{\mathfrak{E}}_{\alpha}: \mathbb{BU}({\rm S})\to\mathbb E (U({\bf H}))$ one has:
\begin{itemize}
\item[{\rm (a)}]  $\widetilde{\mathfrak{E}}_{\alpha}(\mathcal U_{\rm S})=\mathfrak{E}_{\alpha}(\mathcal U_{\rm S})=L\wedge R=R_{\Sigma_{\rm S}}=R_{\Sigma_{\rm B}}$ {\rm(}and, hence, $(U({\bf H}), \tau_p)$ is Roelcke precompact~{\rm\cite{U1998}}, and $b_r U({\bf H})$ is a proper enveloping Ellis semigroup compactification{\rm)}, 
\item[{\rm (b)}] $b_r U({\bf H})$ is the space of all linear operators on  ${\bf H}$  of norm $\leq 1$ in the weak operator topology~{\rm\cite{U1998}},
\item[{\rm (c)}] $b_r U({\bf H})$ is a semitopological semigroup~{\rm\cite{U1998}}.
\end{itemize}
\end{thm}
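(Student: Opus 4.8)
The plan is to run the machinery of Theorem~\ref{autultr} with the neutral subgroup $H=\st_x$, $x\in {\rm S}$, for which $G/H={\rm S}$ and, by the result of Stojanov recorded in the proof of Lemma~\ref{unifunit}, $\beta_G({\rm S})={\rm B}$ (the unit ball in the weak topology). Since a $g\in U({\bf H})$ fixing every unit vector is the identity, the action on ${\rm S}$ is effective, so $\st_x$ contains no nontrivial invariant subgroup, and by Proposition~\ref{unitmax} $\mathcal U_{\rm S}\in\mathbb{BU}({\rm S})$. Thus item (1) of Theorem~\ref{autultr} applies and gives
$$\mathfrak{E}_{\alpha}(\mathcal U_{\rm S})\subset L\wedge R\subset R_{\Sigma_{\rm S}},\qquad \widetilde{\mathfrak{E}}_{\alpha}(\mathcal U_{\rm S})\subset R_{\Sigma_{\rm B}},\qquad \mathfrak{E}_{\alpha}(\mathcal U_{\rm S})\subset\widetilde{\mathfrak{E}}_{\alpha}(\mathcal U_{\rm S}).$$
To collapse all of these to one uniformity it suffices, by Corollary~\ref{cora2-2} and Proposition~\ref{a2-2}, to verify the condition $(\star\star)$ for the extended action on ${\rm B}$ and to check that $R_{\Sigma_{\rm B}}=R_{\Sigma_{\rm S}}$.

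The equality $R_{\Sigma_{\rm B}}=R_{\Sigma_{\rm S}}$ I would read off the stabilizers directly. For $y\in {\rm B}$, $y\neq 0$, the extended (linear) action gives $\st_y=\st_{y/\|y\|}$ with $y/\|y\|\in {\rm S}$, while $\st_0=U({\bf H})$. Hence for every finite $\sigma\subset {\rm B}$ one has $\st_\sigma=\st_{\sigma'}$, where $\sigma'=\{y/\|y\|:\ y\in\sigma,\ y\neq 0\}\subset {\rm S}$, so the two directed families of small subgroups defining $R_{\Sigma_{\rm B}}$ and $R_{\Sigma_{\rm S}}$ coincide.

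The core of the argument, and where I expect the real work to lie, is the verification of $(\star\star)$ on ${\rm B}$. Fix $\sigma\subset {\rm B}$, put $W=\mathrm{span}(\sigma)$ (finite dimensional), and fix a weak entourage $\tilde{\rm U}$ controlled by a finite dimensional $L_0$ and $\epsilon>0$; the plan is to choose $\tilde{\rm V}$ so as to control $\pr_{L_0}$ on $\sigma$ to within a small $\delta$. Through the scalar-product description of Lemma~\ref{unifunit}, the desired conclusion $h\in O_{\tilde{\rm U}}g'$ with $g'=gu$, $u\in\st_\sigma$, is equivalent to $\|\pr_{L_0}(gu-h)\|<\epsilon$, that is, to the existence of a unitary $v=u^{\ast}$ fixing $W$ with $v(g^{\ast}w)\approx h^{\ast}w$ uniformly for $w\in {\rm B}_{L_0}$. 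Writing $p_w=g^{\ast}w$ and $q_w=h^{\ast}w$, unitarity yields the \emph{exact} Gram identities $(p_w,p_{w'})=(w,w')=(q_w,q_{w'})$, while the hypothesis $\pr_{L_0}g|_W\approx\pr_{L_0}h|_W$ (valid since $\sigma$ spans $W$) says precisely that the $W$-components $\pr_W p_w$ and $\pr_W q_w$ are $\delta$-close. Consequently the $W^\perp$-components of $\{p_w\}$ and $\{q_w\}$ have nearly equal Gram matrices, so a unitary $u_1$ of the infinite dimensional $W^\perp$ carries $\pr_{W^\perp}p_w$ close to $\pr_{W^\perp}q_w$; extending $u_1$ by the identity on $W$ produces the required $v$. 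All estimates are finite dimensional, bounded in terms of $\dim L_0$, so $\delta=\delta(\sigma;\tilde{\rm U})$ can be chosen uniformly in $g,h$. The point that makes this work, and the reason no singular-value obstruction arises, is that unitarity forces the Gram data of $p_w$ and $q_w$ to agree exactly, reducing matters to matching the $W$-parts, which the hypothesis already controls.

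With $(\star\star)$ established on ${\rm B}$, item (2) of Corollary~\ref{cora2-2} gives $\widetilde{\mathfrak{E}}_{\alpha}(\mathcal U_{\rm S})=R_{\Sigma_{\rm B}}$, and by item (3) the condition also holds on ${\rm S}$, whence $\mathfrak{E}_{\alpha}(\mathcal U_{\rm S})=R_{\Sigma_{\rm S}}$ by Proposition~\ref{a2-2}. Together with the inclusions above and $R_{\Sigma_{\rm B}}=R_{\Sigma_{\rm S}}$ this forces $\widetilde{\mathfrak{E}}_{\alpha}(\mathcal U_{\rm S})=\mathfrak{E}_{\alpha}(\mathcal U_{\rm S})=L\wedge R=R_{\Sigma_{\rm S}}=R_{\Sigma_{\rm B}}$, proving (a); total boundedness of $L\wedge R$ is the Roelcke precompactness of $(U({\bf H}),\tau_p)$, and since the common value lies in $\mathbb E(U({\bf H}))$, $b_r U({\bf H})$ is a proper enveloping Ellis semigroup compactification. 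For (b), the completion of $(U({\bf H}),\widetilde{\mathfrak{E}}_{\alpha}(\mathcal U_{\rm S}))$ is $\cl\,\tilde\imath(U({\bf H}))$ inside ${\rm B}^{\rm B}$, where by Lemma~\ref{unifunit} the t.p.c. is exactly the weak operator topology; a pointwise-weak limit of unitaries is linear of norm $\leq 1$, and conversely every contraction is such a limit, identifying $b_r U({\bf H})$ with the ball of operators in the weak operator topology, as in~\cite{U1998}. Finally (c) follows because operator multiplication $(S,T)\mapsto ST$ is separately weak-operator continuous on this norm-bounded set, since $(STx,y)=(S(Tx),y)$ and $(STx,y)=(Tx,S^{\ast}y)$; hence $b_r U({\bf H})$ is a semitopological semigroup, recovering~\cite{U1998}.
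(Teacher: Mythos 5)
Your proposal is correct, and on the heart of the matter, part (a), it takes a genuinely different route from the paper. The paper never verifies condition $(\star\star)$ for this action. Instead it first proves $\widetilde{\mathfrak{E}}_{\alpha}(\mathcal U_{\rm S})=\mathfrak{E}_{\alpha}(\mathcal U_{\rm S})$ and $R_{\Sigma_{\rm S}}=R_{\Sigma_{\rm B}}$ by radial homogeneity of the extended action ($\st_y=\st_{y/\|y\|}$, $\st_0=U({\bf H})$, and $f(y)=\|y\|\,f(y/\|y\|)$ for unitaries, so the subbases of the two uniformities coincide), and then obtains the one remaining inclusion $\mathfrak{E}_{\alpha}(\mathcal U_{\rm S})\supset R_{\Sigma_{\rm S}}$ by quoting the key step of Uspenskij's proof of Theorem 1.1 in \cite{U1998} (if $f,g$ satisfy the (w.o.t) estimate then $f=ugv$ with $u\in\st_L$, $v\in V_{L,\varepsilon}$) and reversing the order of the factors to $f\in V_{L,\varepsilon}\,g\,\st_L$ by passing to adjoints. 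You verify $(\star\star)$ for the extended action on ${\rm B}$ head-on: your adjoint reduction, the exact Gram identities forced by unitarity, and the matching unitary on the infinite-dimensional $W^\perp$ amount to a self-contained re-proof of Uspenskij's lemma; then Corollary~\ref{cora2-2} items (2),(3) and Proposition~\ref{a2-2} deliver $\widetilde{\mathfrak{E}}_{\alpha}(\mathcal U_{\rm S})=R_{\Sigma_{\rm B}}$ and $\mathfrak{E}_{\alpha}(\mathcal U_{\rm S})=R_{\Sigma_{\rm S}}$ simultaneously, so you never need the paper's separate radial argument. What your route buys is independence from \cite{U1998} in part (a) and a tighter fit with the paper's own machinery; what the paper's route buys is brevity, since the hard analysis is outsourced to a citation.

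Two steps of your sketch should be written out, though neither hides an obstruction. First, the crux claim that families with $\delta$-close Gram matrices are carried one onto the other, uniformly and up to $O(\sqrt{\delta})$, by a unitary of $W^{\perp}$: with $A,B:\mathbb{C}^{k}\to W^{\perp}$ encoding the two families, take polar decompositions $A=V_A(A^{*}A)^{1/2}$, $B=V_B(B^{*}B)^{1/2}$, extend $V_A,V_B$ to isometries of all of $\mathbb{C}^{k}$ into $W^{\perp}$ (possible precisely because $\dim W^{\perp}=\infty$, which is where infinite-dimensionality enters), pick a unitary $v_1$ of $W^{\perp}$ with $v_1V_A=V_B$, and use $\|(A^{*}A)^{1/2}-(B^{*}B)^{1/2}\|\leq\|A^{*}A-B^{*}B\|^{1/2}$; this gives the uniform bound your argument needs. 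Second, in (b) you cite \cite{U1998} for the nontrivial direction that every operator of norm $\leq 1$ is a pointwise-weak limit of unitaries, whereas the paper proves both directions itself (the stated purpose of this section is an alternative proof of Uspenskij's theorem); a proof in your style is available by the same Gram-matrix device: given a contraction $f$ and linearly independent $x_1,\ldots,x_n$, choose $s_k\in L^{\perp}$ whose Gram matrix is $[(x_i,x_j)]-[(\pr_L f(x_i),\pr_L f(x_j))]$ (positive semidefinite since $\|\pr_L f\|\leq 1$) and extend $x_k\mapsto \pr_L f(x_k)+s_k$ to a unitary of ${\bf H}$. Your argument for (c), separate weak-operator continuity of multiplication on a norm-bounded set via $(STx,y)=(Tx,S^{*}y)$, is correct and is the operator-theoretic form of the paper's remark that every element of $b_r U({\bf H})$ acts weakly continuously.
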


\begin{proof} 
By Proposition~\ref{unitmax} and item (1) of Theorem~\ref{autultr} to prove (a) one can check the fulfillment of the inclusions $\widetilde{\mathfrak{E}}_{\alpha}(\mathcal U_{\rm S})\subset\mathfrak{E}_{\alpha}(\mathcal U_{\rm S})$, $\mathfrak{E}_{\alpha}(\mathcal U_{\rm S})\supset R_{\Sigma_{\rm S}}$ and $R_{\Sigma_{\rm S}}\supset R_{\Sigma_{\rm B}}$.

The proof of the inclusions $\widetilde{\mathfrak{E}}_{\alpha}(\mathcal U_{\rm S})\subset\mathfrak{E}_{\alpha}(\mathcal U_{\rm S})$ and $R_{\Sigma_{\rm S}}\supset R_{\Sigma_{\rm B}}$.  For any $y\in{\rm B}$, $y\ne 0$, let $x=\frac{y}{||y||}\in{\rm S}$. 

Then $\st_y=\st_x$ ($g(y)=y\Longleftrightarrow g(x)=x$, $g\in U({\bf H})$). Since  $\st_0=U({\bf H})$,   $R_{\Sigma_{\rm S}}=R_{\Sigma_{\rm B}}$. 

If $f(x)\in Og(x)$, $O\in N_{U({\bf H})}(e)$, then there exists $h\in O$ such that $f(x)=(h\circ g)(x)$.
$$f(y)=||y||\cdot f(x)=||y||\cdot (h\circ g)(x)=h(||y||\cdot g(x))=(h\circ g)(y).$$
Therefore, if $f(x)$, $g(x)$ are in one element of the cover $\{Ot\ |\ t\in {\rm S}\}\in\mathcal U_{\rm S}$, then  $f(y)$, $g(y)$ are in one element of its extension to  ${\rm B}$. Since $f(0)=0$ for any $f\in U({\bf H})$, $\widetilde{\mathfrak{E}}_{\alpha}(\mathcal U_{\rm S})=\mathfrak{E}_{\alpha}(\mathcal U_{\rm S})$ (subbases of correspondent uniformities coincide).

The proof of the inclusion  $\mathfrak{E}_{\alpha}(\mathcal U_{\rm S})\supset R_{\Sigma_{\rm S}}$. In the proof of Theorem 1.1 from~\cite{U1998} (item (c)) it is shown that for any  $V_{L, \varepsilon}$, $L\in\mathcal L_{{\bf H}}$, $\varepsilon>0$, there exist $\delta>0$, $v\in V_{L, \varepsilon}$ and $u\in\st_L$ such that if $f, g\in U({\bf H})$ and 
$$|(f(x), y)-(g(x), y)|<\delta,\ \mbox{for all}\ x, y\in {\rm B}_L,\eqno{\rm (w.o.t)}$$
then $f=ugv$.

The condition (w.o.t) defines the basic entourage of the uniformity, completion with respect to which is the (compact) unit ball in the space of all bounded operators equipped with the {\it weak operator topology}. This uniformity coincides with the uniformity $\widetilde{\mathfrak{E}}_{\alpha}(\mathcal U_{\rm S})=\mathfrak{E}_{\alpha}(\mathcal U_{\rm S})$ by Lemma~\ref{unifunit}.

Let us note that if $f, g\in U({\bf H})$ and the condition (w.o.t) is fulfilled, then for the adjoint operators $f^*, g^*\in U({\bf H})$ the same condition (w.o.t) is fulfilled. Indeed, 
$$|(f^*(x), y)-(g^*(x), y)|=|(x, f(y))-(x, f(y))|=|(f(y), x)-(g(y), x)|,\ \mbox{and}$$
$$|(f(y), x)-(g(y), x)|<\delta \Longleftrightarrow\ |(f(x), y)-(g(x), y)|<\delta,\ \forall\ x, y\in L,\ ||x||\leq 1,\ ||y||\leq 1.$$

Therefore, if $f, g\in U({\bf H})$ and the condition (w.o.t) is fulfilled, then  $f^{-1}=f^*=u'g^*v'$, where $u\in\st_L$, $v\in V_{L, \varepsilon}$. Hence, $f=v^{-1}gu^{-1}$,  $u^{-1}\in\st_L$, $v^{-1}\in V_{L, \varepsilon}^{-1}=V_{L, \varepsilon}$ (since $||g^{-1}(x)-x||=||g(x)-x||$, $g\in U({\bf H})$) and, finally, $f\in V_{L, \varepsilon}g\st_L$. Hence,  $\mathfrak{E}_{\alpha}(\mathcal U_{\rm S})\supset R_{\Sigma_{\rm S}}$.

\medskip

(b) From (a) it follows that $b_r U({\bf H})$ is the closure of $\tilde\imath (U({\bf H}))$ in ${\rm B}^{\rm B}$. Since $||g(x)||=||x||$, $x\in {\rm B}$, for all $g\in U({\bf H})$,  $||f(x)||\leq ||x||$, $x\in {\rm B}$, for $f\in\cl~\tilde\imath (U({\bf H}))$ (if $||f(x)||>||x||$ for some  $x\in {\rm B}$, then for the nbd $Of(x)$ of $f(x)$ such that $Of(x)\cap\{y\in{\bf H}\ |\ ||y||\leq ||x||\}=\emptyset$, $[x, Of(x)]\cap \tilde\imath (U({\bf H}))=\emptyset$).

For $f\in\cl~\tilde\imath (U({\bf H}))$ let us show that $f$ is the restriction of a linear map on ${\bf H}$.

(1) If $||x||, ||y||\leq\frac12$, then $f(x+y)=f(x)+f(y)$. Indeed, $||x+y||\leq ||x||+||y||\leq 1$ and since  $||f(x)||\leq ||x||$, $x\in {\rm B}$, one has $||f(x+y)||\leq 1$, $||f(x)+f(y)||\leq 1$. Assume that $f(x+y)\ne f(x)+f(y)$. Take disjoint nbds $U$ of $f(x+y)$ and $V$ of $f(x)+f(y)$. Since addition in ${\bf H}$ is continuous there are nbds $Of(x)$ and $Of(y)$ such that $Of(x)+Of(y)\subset V$. Take the open set $O=[x, Of(x)]\cap [y, Of(y)]\cap [x+y, U]$ in ${\rm B}^{\rm B}$. $f\in O$, but $\tilde\imath (U({\bf H}))\cap O=\emptyset$. Hence, $f\not\in\cl~\tilde\imath (U({\bf H}))$.

(2) If $||x||\leq\frac12$, then $f(cx)=cf(x)$ for $c$ such that $|c|\leq 2$. Indeed, assume that  $f(cx)\ne cf(x)$, $||x||\leq\frac12$, $|c|\leq 2$, then $||cx||\leq 1$, $||cf(x)||\leq 1$. Take disjoint nbds $U$ of $f(cx)$ and $V$ of $cf(x)$. Since multiplication on the scalar in ${\bf H}$ is continuous there is nbd $Of(x)$ such that $cOf(x)\subset V$. Take the open set $O=[x, Of(x)]\cap [cx, U]$ in ${\rm B}^{\rm B}$. $f\in O$, but $\tilde\imath (U({\bf H}))\cap O=\emptyset$. Hence, $f\not\in\cl~\tilde\imath (U({\bf H}))$.

Put $\tilde f(x)=2||x|| f(\frac{x}{2||x||})$, $x\in{\bf H}$. For any $c$ and any $x\in{\bf H}$
$$\tilde f(cx)=2|c| ||x||f\big(\frac{cx}{2|c| ||x||}\big)\stackrel{\frac{c}{|c|}\leq 1,\ (2)}{=}2|c| ||x|| \frac{c}{|c|}f\big(\frac{x}{2||x||}\big)=2c ||x|| f\big(\frac{x}{2||x||}\big)=c\tilde f (x).\eqno{{\rm(C)}}$$
For any $x, y\in{\bf H}$
$$\tilde f(x+y)\stackrel{(C)}{=}2(||x||+||y||)f\big(\frac{x+y}{2(||x||+||y||)}\big)=2(||x||+||y||)\big(f\big(\frac{x}{2(||x||+||y||)}\big)+f\big(\frac{y}{2(||x||+||y||)}\big)\big)=$$
$$=2(||x||+||y||)f\big(\frac{x}{2(||x||+||y||)}\big)+2(||x||+||y||)f\big(\frac{y}{2(||x||+||y||)}\big)\stackrel{(C)}{=}\tilde f(x)+\tilde f(y).$$

From (2) and (C)  $\tilde f(x)=f(x)$, $x\in {\rm B}$. Therefore any $f\in\cl~\tilde\imath (U({\bf H}))$ is a restriction of a linear map on ${\bf H}$.

Let $\tilde f$ be any linear operator on  ${\bf H}$  of norm $\leq 1$, $f$ is its restriction to ${\rm B}$. The arbitrary nbd of $f$ is of the form $V=[x_1, Of(x_1)]\cap\ldots\cap [x_n, Of(x_n)]$, where,  without loss of generality, $x_1, \ldots, x_n\in {\bf H}$ are linearly independent, $Ox_1, \ldots, Ox_n$ are open subsets of  ${\rm B}$ in the weak topology such that ${\bf H}=L\oplus {\bf H}_1$ and $\pr^{-1}(\pr\ Of(x_k))=Of(x_k)$, $k=1,\ldots, n$, where $\pr$ is the orthogonal projection of ${\bf H}$ to the finite dimensional subspace $L$.

Let $\varphi$ be any linear isometry of the ${\rm span}(x_1, \dots, x_n)$ to the $n$-dimensional subspace of ${\bf H_1}$ and $\tilde\varphi$ its extension to the linear isometry of ${\bf H}$ (any linear isometry of the orthogonal complement to the ${\rm span}(x_1, \dots, x_n)$ to the  orthogonal complement to the image  $\varphi ({\rm span}(x_1, \dots, x_n))$ can be chosen). Then  $\tilde\varphi\in V$ and, hence, $f\in b_r U({\bf H})$.

(c) follows from (b) (all elements of $b_r U({\bf H})$ are continuous in the weak topology on ${\bf H}$).
\end{proof}

\bigskip

\end{document}